\newcommand{\scal}[2]{\langle #1, #2\rangle} 
\newcommand{\dd}{\mathrm{d}}
\newcommand{\NN}{\mathbb N}
\newcommand{\RR}{\mathbb R}
\newcommand{\CC}{\mathbb C}
\newtheorem{theorem}{Theorem}[section]
\newtheorem{lemma}[theorem]{Lemma}
\newtheorem{proposition}[theorem]{Proposition}
\newtheorem{corollary}[theorem]{Corollary}
\theoremstyle{definition}
\newtheorem{definition}{Definition}
\newtheorem{example}[theorem]{Example}
\newtheorem{remark}[theorem]{Remark}
\begin{document}
\title{The fundamental solution of a class of ultra-hyperbolic operators on Pseudo $H$-type groups}
\author{
Wolfram Bauer and Andr\'{e} Froehly   \thanks{All authors have been supported by the DAAD-NFR project  "Subriemannian structures on Lie groups, differential forms and PDE"; 
project number (NFR) 267630/F10 and (DAAD) 57344898.} \\
{\small Institut f\"{u}r Analysis, Leibniz Universit\"{a}t Hannover,} \\
{\small Welfengarten 1, 30167 Hannover, Germany}\\
{\small E-mail: \texttt{bauer@math.uni-hannover.de}}\\
{\small E-mail: \texttt{andre.froehly@math.uni-hannover.de}}\\
\vspace{1ex}\\ 
Irina Markina $ \mbox{}^*$\\
{\small Department of Mathematics, University of Bergen, },\\
{\small P.O. Box 7800, Bergen N-5020, Norway} \\
{\small E-mail: \texttt{Irina.Markina@math.uib.no}}}
\maketitle

\begin{abstract}
Pseudo $H$-type Lie groups $G_{r,s}$ of signature $(r,s)$ are defined via a module action of the Clifford algebra $C\ell_{r,s}$ on a vector space $V \cong \mathbb{R}^{2n}$. 
They form a subclass of all 2-step nilpotent Lie groups and based on their algebraic structure they can be equipped with 
a left-invariant pseudo-Riemannian metric. Let  $\mathcal{N}_{r,s}$ denote the Lie algebra corresponding to $G_{r,s}$. A choice of left-invariant vector fields 
$[X_1, \ldots, X_{2n}]$ which generate a complement of the center of $\mathcal{N}_{r,s}$  gives rise to a second order operator 
\begin{equation*}
\Delta_{r,s}:= \big{(}X_1^2+ \ldots + X_n^2\big{)}- \big{(}X_{n+1}^2+ \ldots + X_{2n}^2 \big{)}, 
\end{equation*}
which we call ultra-hyperbolic. In terms of classical special functions we present families of fundamental solutions of $\Delta_{r,s}$ in the case $r=0$, $s>0$ and study their properties. 
In the case of $r>0$ we prove that $\Delta_{r,s}$ admits no fundamental solution in the space of tempered distributions. Finally we discuss the local solvability of $\Delta_{r,s}$ and the 
existence of a fundamental solution in the space of Schwartz distributions. 
\\
{\bf keywords:} left invariant homogeneous differential operator, distributions, local solvability, Bessel functions
\\
{\bf MSC 2010:} primary: 65M80, secondary: 22E25
\end{abstract}
\section{Introduction}
\label{Introduction}
The study of existence and explicit representations of a fundamental solution to various geometrically induced differential operators has stimulated some research during the last decades (cf. 
\cite{BGP,Card_Saal,Gelfand_Shilov,KobayashiOrsted,MuellerRicci, MuellerRicci_2,DRham,Tie}). In this paper we are concerned with such a problem in case of a second order homogeneous 
differential operator $\Delta_{r,s}$ which is induced by a pseudo $H$-type Lie group $G_{r,s}$ (see Section \ref{Pseudo_H_type_groups} for the precise definitions). We write 
$\mathcal{N}_{r,s}$ for the Lie algebra of $G_{r,s}$ and in the standard way we identify $G_{r,s}$ and $\mathcal{N}_{r,s}$ through the exponential map. The Lie algebra 
$\mathcal{N}_{r,s}$ is nilpotent of step two and it admits a decomposition $\mathcal{N}_{r,s}=V\oplus Z$ into its center $Z$ and a complement. The subspace $V$ is even dimensional and generates 
a left invariant distribution on $G_{r,s}$ spanned by $\{X_1,\ldots,X_{2n}\}$. Moreover, $G_{r,s}$ is equipped with a pseudo-Riemannian left-invariant metric which has signature $(n,n)$ when it is 
restricted to the distribution. The differential operator $\Delta_{r,s}$ induced by this setup has the explicit form 
\begin{equation}\label{Definition_Ultra_hyperbolic_operator}
\Delta_{r,s}:= \big{(}X_1^2+ \ldots + X_n^2\big{)}- \big{(}X_{n+1}^2+ \ldots + X_{2n}^2 \big{)}. 
\end{equation}
Due to its similarity with the classical ultra-hyperbolic operator 
\begin{equation}\label{Definition_classical_UHO_introduction}
\mathcal{L}= \sum_{j=1}^n \frac{\partial^2}{\partial x_j^2}- \frac{\partial^2}{\partial x_{n+j}^2}
\end{equation}
 on $\mathbb{R}^{2n}$ we call $\Delta_{r,s}$ an ultra-hyperbolic operator on the Lie group $G_{r,s}$. We note that $\Delta_{r,s}$ degenerates on the center $Z$ and different from $\mathcal{L}$ it 
 has non-constant coefficients such that the theorem of Malgrange-Ehrenpreis on the existence of a fundamental solution is not at disposal. In the 
 special case where $G_{r,s}$ is the Heisenberg group $G_{0,1}$ an explicit representation of a fundamental solution of (\ref{Definition_Ultra_hyperbolic_operator}) has been previously 
 obtained in \cite{MuellerRicci_2,Tie}. 
 
 The global solutions to the equation $\Delta_{r,s}u=0$ and some of its generalizations on the Heisenberg group has been studied in~\cite{Kable1, Kable2}. 
 Therein the Heisenberg group was realized as a flag manifold $G/Q$ of some classical group $G$ factored by a parabolic subgroup $Q$ and the differential operators are acting on the 
 sections of certain line bundles. It has been observed in ~\cite{AltomaniSanti, FGMMV}  that some of the pseudo $H$-type groups can also be interpreted as flag manifolds. In these cases 
 the action of the ultra-hyperbolic operator may be considered on the corresponding bundle.  
 
The goals of this paper are as follows: 
 \begin{itemize}
 \item[(a)] Characterize the pairs $(r,s)$ for which the ultra-hyperbolic operator $\Delta_{r,s}$ admits a fundamental solution within the space of tempered or Schwartz distributions.  
 \item[(b)] Explicitly derive a class of fundamental solutions in the space of tempered distributions in all cases in which the existence is guaranteed. 
 \item[({c})] Characterize the local solvability of the ultra-hyperbolic operator $\Delta_{r,s}$. 
 \end{itemize}
\par
 Our strategy of deriving the fundamental solution of (\ref{Definition_Ultra_hyperbolic_operator}) in the most general setting is based on the following observation by J. Tie in \cite{Tie}. In the case of 
 signature $(r,s)=(0,1)$ it can be verified that a suitable change from real to complex variables in (\ref{Definition_Ultra_hyperbolic_operator}) transforms the ultra-hyperbolic operator $\Delta_{0,1}$ into a 
 sub-Laplace operator $\Delta_{\textup{sub}}$ on $G_{0,1}$. Based on the sub-ellipticity of $\Delta_{\textup{sub}}$ the existence of the heat kernel and a fundamental solution is guaranteed. Performing the same transformation to an explicit form of the fundamental solution of $\Delta_{\textup{sub}}$, produces -  after a regularization 
 procedure - a fundamental solution of the ultra-hyperbolic operator $\Delta_{0,1}$. In case of an arbitrary pseudo $H$-type group with signature $(r,s)$ a similar change of variables in $\Delta_{r,s}$ formally 
 gives the sub-Laplace operator  $\Delta_{\textup{sub}}$ corresponding to a certain 2-step nilpotent Lie group. 
 The heat kernel of $\Delta_{\textup{sub}}$ is known explicitly (c.f. \cite{BGG, OvFuIw,Furu}) and can be used to calculate a fundamental solution of $\Delta_{\textup{sub}}$ via an integration over the ''time variable''. 
 In the same way as before the corresponding change of variables in the fundamental solution of $\Delta_{\textup{sub}}$ produces a formal expression of a fundamental solution of $\Delta_{r,s}$. 
 However, a regularization process which rigorously defines a tempered distribution seems only possible in the case $r=0$. As it turns out this is not an artifact. In the last part of the paper we will show 
 that for $r>0$ a fundamental solution of $\Delta_{r,s}$ in the space of tempered distributions or even within the Schwartz distributions does not exist. 
 \vspace{1ex}\par 
 The non-uniqueness of a fundamental solution for a class of second order differential operators on a Lie group which contains $\Delta_{r,s}$ in a very special case was observed in \cite{MuellerRicci, MuellerRicci_2,Tie}. In our general setting we will present an uncountable family of fundamental solutions of  $\Delta_{0,s}$, $s \geq 1$  in (\ref{Definition_Ultra_hyperbolic_operator}) and relate them to classical 
 special functions (e.g. Bessel functions of the first and second kind). One of the fundamental solutions $K_{0,1}$ of the ultra-hyperbolic operator on the Heisenberg group  obtained in \cite{Tie} coincides with an iterated integral expression which previously  and by different methods was presented by D. M\"{u}ller and F. Ricci in \cite{MuellerRicci_2}.  
 This fact was already noticed by J. Tie in~\cite{Tie} and since our approach generalizes the expression presented there, it is not surprising that we can detect $K_{0,1}$ among the above mentioned family of distributions. In a second step we generalize D. M\"{u}ller and F. Ricci's formula to the case $s>1$ and $r=0$. 
 \vspace{1ex}\par
 Finally we consider the ultra-hyperbolic operator $\Delta_{r,s}$ in the case $r>0$. It is known that a left-invariant  differential operator on a Lie group $G$ in general does not possesses a
global fundamental solution among the tempered distributions $\mathcal{S}^{\prime}(G)$. 
Additional assumptions on the group and the operator have to be made to guarantee the existence (see \cite{Rais} and the references therein).   
 In Theorem \ref{theorem_existence_of_fundamental_solution_r>0} we prove that in the case $r >0$ the ultra-hyperbolic operator $\Delta_{r,s}$ does not admit a fundamental solution in 
 $\mathcal{S}^{\prime}(G_{r,s})$. One may pose the question whether one can invert $\Delta_{r,s}$ in the larger space $\mathcal{D}^{\prime}(G_{r,s})$ of Schwartz distributions. On the homogeneous 
 Lie group $G_{r,s}$ this question is known to be related to the {\it local solvability} of the operator (see \cite{Battesti,Mueller}). Based on a criterion by D. M\"{u}ller in \cite[Theorem \ref{theorem_Mueller_non_local_solvability}]{Mueller} we show that  $\Delta_{r,s}$ is {\it not locally solvable} if and only if $r>0$. Using this fact we can answer the above 
 question in a negative sense and prove non-existence of a fundamental solution of $\Delta_{r,s}$, $r>0$ in $\mathcal{D}^{\prime}(G_{r,s})$. 
 \vspace{1ex}\\ 
 The paper is organized as follows. \par 
In Section \ref{Pseudo_H_type_groups} we recall the notion of a pseudo $H$-type group attached to a $C\ell_{r,s}$-Clifford module and we present two low dimensional examples of such groups. In suitable coordinates we explicitly represent the left-invariant ultra-hyperbolic operator $\Delta_{r,s}$ as a difference of two {\it sum-of-squares operators}. 

Section \ref{Section_2} contains more details on the ultra-hyperbolic operator together with a few technical calculations that play a role in the subsequent analysis. In particular, we consider 
$\Delta_{r,s}$ after a partial Fourier transform as an operator on the Schwartz space. 

Via a suitable change from real to complex coordinates we relate the ultra-hyperbolic operator $\Delta_{r,s}$ to a (hypo-elliptic) sub-Laplace operator $\Delta_{\textup{sub}}$ on a 2-step nilpotent Lie group 
in Section \ref{Section_From_Sub-Laplacian_to_UHO}. 

In Section \ref{section_3} we rigorously show that the distribution derived in Section \ref{Section_From_Sub-Laplacian_to_UHO} in fact defines a fundamental solution $K_{0,s}$ of $\Delta_{0,s}$, i.e., 
in the case where $r=0$ and $s\geq 1$. 

We note that  $K_{0,s}$ is not the unique fundamental solution and we relate it to classical special function (Bessel functions of the 
first and second kind) in Section \ref{Section_non-uniqueness-of-the-fundamental-solution}. As a result we present an uncountable family of fundamental solutions. 

In Section \ref{Section_A second form of  fundamental solutions} we present a second form of a (specific)  fundamental solution $K_{0,s}$ of $\Delta_{0,s}$ which was obtained in the 
previous chapter. In the case of $G_{0,1}$ this form coincides with the expression obtained in \cite{MuellerRicci,Tie}. However, since the condition $s=1$ is not 
required in our setup we have obtained a generalization of the distributions in \cite{MuellerRicci,Tie} to Lie groups $G_{0,s}$ with center dimension $s>1$. 

Section \ref{Section_Singular_set_K_0_s_+} discusses the singularities of a fundamental solution in the case $r=0$. In particular, we determine a cone in $\mathbb{R}^{2n+s}$ containing its 
singular support. 

The remaining two chapters are concerned with the invertibility of $\Delta_{r,s}$ in the case $r>0$. In Section \ref{s_r_>_0} we prove that in this case no fundamental solution among the tempered distributions 
exists. In the final Section \ref{Section_Local_solvability} we relate our results to the problem of {\it local solvability} of $\Delta_{r,s}$ for $r>0$. Based on a theorem by D. M\"{u}ller in \cite{Mueller} 
we show non-local solvability and non-existence of a fundamental solution in the Schwartz distributions $\mathcal{D}^{\prime}(G_{r,s})$. 

In the appendix we relate a family of classical distributions associated to a non-degenerate quadratic form on $\mathbb{R}^{2n}$ in \cite{Gelfand_Shilov} to the representation of the 
fundamental solution of $\Delta_{0,s}$ which was obtained in Chapter \ref{Section_A second form of  fundamental solutions}. 
\section{Pseudo $H$-type groups}
\label{Pseudo_H_type_groups}
\label{section_introduction}
Let $r,s \in \mathbb{N}_0$ and consider $\mathbb{R}^{r,s}= \mathbb{R}^{r+s}$ with the non-degenerate bilinear form (scalar product)
\begin{equation*}
\scal{x}{y}_{r,s}= \sum_{i=1}^r x_iy_i- \sum_{j=1}^s x_{r+j}y_{r+j} 
\end{equation*}
and the corresponding quadratic form $q_{r,s}(x):= \langle x,x \rangle_{r,s}$. We denote by $C\ell_{r,s}$  the Clifford algebra generated by $(\RR^{r,s}, q_{r,s})$. Let $V$ be a 
$C\ell_{r,s}$-Clifford module, i.e., $V$ is a real vector space with module action 
\begin{equation*}
J: C\ell_{r,s} \times V \rightarrow V . 
\end{equation*}
For $z \in \RR^{r,s}$ we use the notation $J_z := J(z, \cdot) :V \rightarrow V$ . 
Moreover, we write $\mathbb{R}(n)$ for the set of all $n\times n$ matrices with entries in $\mathbb{R}$. 

\begin{definition}\label{definition:Clifford_module_action}
We call the module $V$ of the Clifford algebra $C\ell_{r,s}$ {\it admissible} if it carries a non-degenerate symmetric bilinear form $\langle \cdot\,, \cdot \rangle_V$ which satisfies  the following properties:
\begin{align}
\langle J_zX,J_zY \rangle_V
&= \langle z,z\rangle_{r,s}  \langle X,Y\rangle_V,\label{GL_1}\\
\langle J_zX, Y \rangle_V&=-\langle X, J_zY \rangle_V, \label{GL_2}\\
J_z^2&=- \langle z,z \rangle_{r,s} I, \label{GL_3}
\end{align}
where $I$ denotes the identity operator. Note that conditions (\ref{GL_1}) and (\ref{GL_2}) are equivalent if property (\ref{GL_3}) holds. The existence of admissible modules for the Clifford algebra 
$C\ell_{r,s}$ was shown in~\cite[Theorem 2.1]{Ci}. The following is known, see~\cite[Proposition 2.2]{Ci}: 
\end{definition}

\begin{lemma}
If $s>0$, then $(V, \langle \cdot \,, \cdot \rangle_V)$ has positive definite and negative definite subspaces of the same dimension. In particular, we have $\dim V = 2n$ for some $n \in \NN$. 
\end{lemma}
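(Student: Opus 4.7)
The plan is to use the existence of a Clifford generator with negative norm (which is where the hypothesis $s>0$ enters) to produce an involution on $V$ that is an anti-isometry for $\langle\cdot,\cdot\rangle_V$; the two eigenspaces will then turn out to be maximal totally isotropic and paired non-degenerately by the form, forcing the signature to be balanced.

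First I would pick a vector $z \in \mathbb{R}^{r,s}$ with $\langle z, z\rangle_{r,s} = -1$, which is possible precisely because $s>0$ (for instance, $z = e_{r+1}$). By property (GL_3) this gives $J_z^2 = I$, so $J_z$ is diagonalizable on $V$ with eigenvalues in $\{+1,-1\}$, yielding a direct sum decomposition $V = V^+ \oplus V^-$ into the $\pm 1$ eigenspaces. Simultaneously, property (GL_1) reads $\langle J_z X, J_z Y\rangle_V = -\langle X,Y\rangle_V$ for this particular $z$, so $J_z$ is an \emph{anti-isometry} of the bilinear form.

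Next I would observe that the anti-isometry property forces both $V^+$ and $V^-$ to be totally isotropic: for $X \in V^+$ we have $J_zX = X$, hence
\begin{equation*}
\langle X, X\rangle_V = \langle J_zX, J_zX\rangle_V = -\langle X,X\rangle_V,
\end{equation*}
so $\langle X,X\rangle_V = 0$, and by polarization $\langle\cdot,\cdot\rangle_V$ vanishes identically on $V^+$; the same argument applies to $V^-$. Since $\langle\cdot,\cdot\rangle_V$ is non-degenerate on $V$ and each eigenspace is its own radical in the restricted sense, the induced pairing $V^+ \times V^- \to \mathbb{R}$ must be non-degenerate, which forces $\dim V^+ = \dim V^-$.

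Setting $n := \dim V^+ = \dim V^-$, we immediately obtain $\dim V = 2n$. To conclude the signature claim, I would pick dual bases $\{e_1,\dots,e_n\} \subset V^+$ and $\{f_1,\dots,f_n\} \subset V^-$ with $\langle e_i, f_j\rangle_V = \delta_{ij}$, and then exhibit the orthogonal basis $\{(e_i+f_i)/\sqrt{2}\} \cup \{(e_i-f_i)/\sqrt{2}\}$, on which the form takes values $+1$ and $-1$ respectively. This produces positive and negative definite subspaces of the same dimension $n$, completing the proof. The only subtle point is the non-degeneracy argument for equality of the two dimensions, but this is the standard hyperbolic-space reasoning and presents no real obstacle once the isotropy of the eigenspaces is established.
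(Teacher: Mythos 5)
Your proof is correct. Note that the paper does not supply its own argument here -- it simply cites \cite[Proposition 2.2]{Ci} -- and the reasoning you give (choose $z$ with $\langle z,z\rangle_{r,s}=-1$, observe that \eqref{GL_3} makes $J_z$ an involution while \eqref{GL_1} makes it an anti-isometry, decompose $V$ into the $\pm1$-eigenspaces, deduce that each eigenspace is totally isotropic and that non-degeneracy forces the hyperbolic pairing between them) is precisely the standard argument underlying Ciatti's result. All the steps are sound: the minimal polynomial of $J_z$ divides $x^2-1$ and splits with simple roots over $\mathbb{R}$, so $J_z$ is diagonalizable; the polarization step uses symmetry of $\langle\cdot,\cdot\rangle_V$; and the dimension count via the injection $V^+\hookrightarrow (V^-)^*$ induced by the form is the usual hyperbolic-space argument. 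You have effectively supplied the proof that the paper delegates to the reference.
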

In what follows we consider the case $s>0$ only. We may define a Lie bracket $[\cdot\,, \cdot ]: V \times V \rightarrow \mathbb{R}^{r,s}$ through the following equation
\begin{equation*}
\langle J_zX, Y \rangle_V= \langle z, [X,Y] \rangle_{r,s}, \qquad  z \in \mathbb{R}^{r,s}, \; \; X,Y \in V. 
\end{equation*}
\begin{definition}
Let $V$ be an admissible $C\ell_{r,s}$-module. With the above bracket relation and centre $\mathbb{R}^{r,s}$ the space 
\begin{equation}\label{Def_Pseudo_H_type_Lie_algebra}
\mathcal{N}_{r,s} := V \oplus \mathbb{R}^{r,s}
\end{equation}
defines a $2$-step nilpotent Lie algebra which we call {\it pseudo $H$-type algebra}. Further  information on the algebraic properties of such algebras and an analysis of associated second order 
differential operators can be found in \cite{BFI,Ci, FM1}.
\end{definition}
\begin{example}
\label{Example_Heisenberg_Lie_algebra} (The Heisenberg algebra $\mathcal N_{0,1}$)
We represent the Heisenberg Lie algebra as a pseudo $H$-type algebra via an admissible $C\ell_{0,1}$-module. Let  be $z \in \mathbb{R}^{0,1}$ such that  $\langle z,z\rangle_{0,1}=-1$ 
and consider $V := \RR^{n} \times \RR^n$ with basis $(v_1, \ldots, v_n, w_1, \ldots , w_n)$. Define 
\begin{align*}
	  \scal{v_i}{v_j}_V &= \delta_{ij} , \qquad   \scal{w_i}{w_j}_V =  - \delta_{ij} , \qquad \scal{v_i}{w_j} = 0 . 
\end{align*}
We put $J_z v_i = w_i$, $J_z w_i  = v_i $ 

and extend $J_z$ by linearity. One obtains: 
\begin{equation*}
	\scal{[v_i, w_i ]}{z}_{0,1}= \scal{J_z v_i}{w_i}_V =  \scal{J_z v_i}{J_z v_i}_V= \scal{z}{z}_{0,1} \scal{v_i}{v_i}_V = - 1 . 
\end{equation*}
Thus, $[v_i,w_i] = z$, whereas the other commutators vanish. 
\end{example}
\begin{example}(The algebra $\mathcal{N}_{1,1}$) 
We choose a basis $\{z_1, z_2\}$ of $\mathbb{R}^{1,1}$ with 
\begin{align*}
  \scal{z_1}{z_1}_{1,1} =1 , \qquad  \langle z_2,z_2\rangle_{1,1} =-1 , \qquad  \langle z_1, z_2 \rangle_{1,1}   =0. 
\end{align*}
Then $C\ell_{1,1} \cong \mathbb{R}(2)$ and an admissible module $V$ of minimal  dimension is 4-dimensional. In this case we may  choose $v \in V$ with $\langle v,v \rangle_V=1$ and put 
\begin{align*}
X_1&=v, \qquad  X_2  = J_{z_1}v  , \qquad  X_3 = J_{z_1}J_{z_2}v,   \qquad   X_4= J_{z_2}v. 
\end{align*}
We  obtain the following table of commutation relations of $\mathcal{N}_{1,1}$: 
\[
\begin{array}{|c||c|c|c|c|}\hline
{\text{\tiny{\tiny[row,column]}}}   &X_1 &X_2&X_3&X_4\\\hline\hline
X_1&0&z_1 & 0  &z_2   \\\hline
X_2& -z_1 &0 & - z_2& 0 \\\hline
X_3& 0 & z_2 &0&z_1 \\\hline
X_4& - z_2 & 0&-z_1 &0\\\hline
\end{array}
\]
Moreover, we may show that   
\begin{align*}
	\langle X_2, X_2 \rangle_V&=1, & \langle X_3, X_3 \rangle_V&=-1, & \langle X_4, X_4 \rangle_V&=-1.
\end{align*}
\end{example}
For a 2-step nilpotent Lie algebra $\mathfrak g = T_eG$ induced by a connected and simply connected Lie group $G$, the   exponential map  $\exp: \mathfrak g  \rightarrow G$ becomes a diffeomorphism 
and therefore allows us to identify $\mathfrak{g}$ and $G$. From the Baker-Campbell Hausdorff formula, which states for a 2-step nilpotent Lie algebra that 
\begin{equation*}
	\exp(X) \ast  \exp(Y) = \exp \Big{(} X+Y+ \frac{1}{2} \big{[}X,Y \big{]} \Big{)}, 
\end{equation*}
 we may reconstruct the group structure. In what follows we will write $G_{r,s}$ for the connected, simply connected nilpotent Lie group with Lie algebra $\mathcal{N}_{r,s}$ and call it 
 {\it pseudo $H$-type group}. Note that with the above identification and as a vector space we have 
 $$G_{r,s}\cong \mathcal{N}_{r,s}  \cong \RR^{2n+r+s}.$$
 On the pseudo $H$-type algebra (\ref{Def_Pseudo_H_type_Lie_algebra}) we may define a scalar product by 
\begin{equation*}
\big{\langle} x+z, x^{\prime}+z^{\prime} \big{\rangle}_{\mathcal{N}_{r,s}} := \langle x, x^{\prime} \rangle_V+ \langle z,z^{\prime} \rangle_{r,s} \hspace{2ex} \mbox{\it where} \hspace{2ex} 
x+z , x^{\prime}+ z^{\prime} \in V \oplus \mathbb{R}^{r+s}
\end{equation*}
and extend it to a left-invariant  pseudo-Riemannian metric on $G_{r,s}$. 
\vspace{1ex}\par 
We now write up the construction more explicitly. Given a pseudo $H$-type algebra $\mathcal{N}_{r,s}= V \oplus \RR^{r,s}$ with  $V = \mathrm{span}  \{X_j : j = 1,\ldots, 2n \}$ and $\RR^{r,s} = \mathrm{span}  \{Z_k : k = 1,\ldots, r+s \}$ 
 we may identify the generators $X_j$ of a complement of the center and $Z_k$ of the center, respectively, with left-invariant vector fields on $G_{r,s} \cong \RR^{2n+r+s}$ as follows: 
\begin{equation}\label{eq:vector_fields_X_j_Z_k}
	X_j := \frac{\partial}{\partial x_j} +  \sum_{m=1}^{2n}  \sum_{k=1}^{r+s}    a_{mj}^k x_m \frac{\partial}{\partial z_k} , \quad j = 1, \ldots, 2n, \qquad 
	Z_k := \frac{\partial}{\partial z_k} , \quad k = 1, \ldots , r+s . 
\end{equation}
We assume that 
\begin{align*}
\langle X_i, X_j\rangle_V
&= \delta_{ij}, && i,j \in \{ 1, \ldots, n\},\\
\langle X_i, X_j \rangle_V
&= - \delta_{ij}, && i,j \in \{ n+1, \ldots, 2n\}\\
\langle X_i, X_j\rangle_V
&=0&&  i\in  \{ 1, \ldots, n\},\ \ j \in \{ n+1, \ldots, 2n\}, 
\end{align*}
The structure constants satisfy $a^k_{ij}=-a^k_{ji}$ and are defined through the commutation relations, i.e., we have
\begin{equation*}
[X_i, X_j]= 2 \sum_{k=1}^{r+s}  a_{ij}^k Z_k , \qquad i,j \in \{ 1, \ldots, 2n\} . 
\end{equation*}
In what follows we form the matrices $\Omega_k \in \mathbb{R}(2n)$ with entries $(\Omega_k)_{ij} := a_{ij}^k$. We also denote by $\langle \cdot\,,\cdot\rangle$ the standard Euclidean inner product 
on $\mathbb{R}^{2n}$. Then the corresponding Lie  group action  on $G_{r,s}\cong \RR^{2n+r+s}$ is given by 
\begin{equation}\label{GL_product_Lie_group}
(x, z) \ast (y , w ) = \biggr( x + y, z + w + \sum_{k=1}^{r+s} \scal{\Omega_k^T x}{y} \mathbf e_k \biggr), 
\end{equation}
 where $\mathbf e_k \in \RR^{r+s} $ is the $k$-th canonical unit vector. In what follows we put 
\begin{align*}
 	\Omega ( \eta) : = \eta_1 \Omega_1 + \ldots +  \eta_{r+s} \Omega_{r+s} , \qquad \eta \in \RR^{r+s}  . 
\end{align*}
Note that $\Omega ( \eta)^T = - \Omega(\eta)$. With the obvious notation the vector fields $X_j$ can be expressed in the form:  
\begin{equation}\label{eq:representation_X_j_Omega}
	X_j = \frac{\partial}{\partial x_j} -  \biggr( \Omega \Bigr(\frac{\partial}{\partial z_1} , \ldots, \frac{\partial}{\partial z_ {r+s} } \Bigr) x\biggr)_j . 
\end{equation}
In what follows we want to consider the second order differential operator $\Delta_{r,s}$ defined in (\ref{Definition_Ultra_hyperbolic_operator}) and 
associated with the pseudo $H$-type group $G_{r,s}$. We call $\Delta_{r,s}$ an {\it ultra-hyperbolic operator}, due to its similarly with the classical ultra-hyperbolic operator $\mathcal{L}$ in (\ref{Definition_classical_UHO_introduction}) acting on $\mathbb{R}^{2n}$ (see \cite{Hoermander,DRham}). Our aim is to calculate a fundamental solution for this operator, i.e., 
we look for a distribution $K_{r,s} \in \mathcal{S}^{\prime}(\mathbb{R}^{2n+r+s})$, which satisfies 
$$
	\Delta_{r,s} K_{r,s} = \delta_0 , 
$$
where $\delta_0$ is the Dirac distribution centered at $0 \in \RR^{2n+r+s}$. Note that different from $\mathcal{L}$ the operator $\Delta_{r,s}$ does not have constant coefficients and therefore even 
the existence of such a fundamental solution is not guaranteed. The differential operator $\Delta_{r,s}$ is built from left-invariant vector fields and therefore left-invariant itself. Let $g \in G_{r,s}$ be 
arbitrary and $K$ a fundamental solution of $\Delta_{r,s}$ as above. With the left-translation: 
\begin{equation*}
L_g: \mathcal{S}(\mathbb{R}^{2n+s+r}) \rightarrow \mathcal{S}(\mathbb{R}^{2n+r+s}): \varphi \mapsto L_g(\varphi)= \varphi( g * \cdot ) 
\end{equation*}
consider the distribution $K_g:= K \circ L_g$. Then by the last remark $K_g$ solves the equation
\begin{equation*}
\Delta_{r,s} K_g=\delta_g, 
\end{equation*}
where $\delta_g(\varphi)= \varphi(g)$ denotes the evaluation in $g$. In the formulas below we will observe a close relation between $\Delta_{r,s}$ and $\mathcal{L}$ similar to the relations between the sub-Laplacian $\Delta_{\textup{sub}}:=-\sum_{j=1}^{2n}  X_j^2$ on 2-step nilpotent Lie groups and the Laplacian on $\mathbb R^{2n}$.
\section{The ultra-hyperbolic operator}
\label{Section_2}
Our approach is based on  a  formal observation by J. Tie in \cite{Tie} where  a fundamental solution of the ultra-hyperbolic operator was constructed in the special case of the {\it Heisenberg Lie algebra} 
$\mathcal N_{0,1} \cong  \RR^{2n} \oplus \RR^{0,1}$ (cf. Example \ref{Example_Heisenberg_Lie_algebra} ). Here the vector fields in (\ref{eq:vector_fields_X_j_Z_k}) take the form:  
\begin{align*}
	X_j &= \frac{\partial}{\partial x_j} - \frac{x_{j+n}}2 \frac{\partial}{\partial z} , \qquad \mbox{\it and}\qquad
	X_{j+n} = \frac{\partial}{\partial x_{j+n}} + \frac{x_{j}}2 \frac{\partial}{\partial z}, \hspace{4ex} j=1, \ldots ,n. 
\end{align*}
More precisely, it is shown that 
\begin{equation}\label{eq_fund_solution_rs}
K_{r,s}(x, z) = \frac{i^{n+1} \Gamma(n)}{8\pi^{n+1}} \int_\RR \frac{1}{\left \{ \frac{\eta}4 (\sum_{j=1}^n x_j^2 - x_{j+n}^2 ) \coth \left( \frac{\eta}4  \right) - z \eta \right\}^n  } \left( \frac{\eta}{4 \sinh
 	\left( \frac{\eta}4  \right)} \right)^n \; \dd \eta  
\end{equation}
is a fundamental solution for (\ref{Definition_Ultra_hyperbolic_operator}). Note that $K_{r,s}(x, z)$ needs to be regularized, however a suitable regularization is shown to converge in the space of tempered distributions. Moreover, the structure of the singular support was discussed in \cite{Tie} and the  construction of  $K_{r,s}(x,z)$ was reduced to  determine the inverse symbol of $\Delta_{r,s}$  in the 
framework of a pseudo-differential calculus. However, Formula \eqref{eq_fund_solution_rs} may also be deduced by a formal change of coordinates. Putting 
\begin{align}\label{eq:variable_transform}
	\left\{\begin{array}{rl} y_j &= - i x_j , \quad j \in \{ 1, \ldots, n \} , \\
	y_{j+n} &= x_{j+n} , \quad j \in \{ 1, \ldots, n \} ,\\
	\qquad w &= -  iz , \end{array} \right. 
\end{align}
we  obtain for $j \in \{ 1, \ldots, n \}$ that 
\begin{align*}
 	\frac{\partial}{\partial x_j} = \frac{\partial y_j }{\partial x_j}  \frac{\partial}{\partial y_j} = 
	- i  \frac{\partial}{\partial y_j}  , \qquad 
 	\frac{\partial}{\partial x_{j+n}} = \frac{\partial}{\partial y_{j+n} } , \qquad \frac{\partial}{\partial z} = - i \frac{\partial}{\partial w}. 
\end{align*}
Then the operator $\Delta_{r,s}$ transforms formally to the corresponding hypoelliptic sub-Laplacian 
\begin{align*}
	\Delta_{\mathrm{sub}} := \sum_{j=1}^n - Y_j^2 - Y_{j+n}^2 , 
\end{align*}
where we have 
\begin{align*}
	Y_j &= \frac{\partial}{\partial y_j} - \frac{y_{j+n}}2 \frac{\partial}{\partial w} , \qquad Y_{j+n} = \frac{\partial}{\partial y_{j+n}} + \frac{y_{j}}2 \frac{\partial}{\partial w}, \qquad  j \in \{1, \ldots, n \}. 
\end{align*}
As is well known (e.g. \cite{BGP}) a fundamental solution of the sub-Laplacian $\Delta_{\mathrm{sub}}$ is given by 
\begin{equation*}\label{eq_fund_solution_sub}
 	K_{\mathrm{sub}}(y, w) := \frac{\Gamma(n)}{8\pi^{n+1}} \int_\RR \frac{1}{\left \{ \frac{\eta}4 (\sum_{j=1}^n y_j^2 + y_{j+n}^2 ) \coth \left( \frac{\eta}4  \right) +  iw \eta \right\}^n } \left( \frac{\eta}{4 \sinh
 	\left( \frac{\eta}4  \right)} \right)^n \; \dd \eta. 
\end{equation*}
Applying again \eqref{eq:variable_transform} and transforming $\eta \mapsto - \eta$  we obtain \eqref{eq_fund_solution_rs}  up to the factor $i^{n+1}$, 
which formally may be interpreted as a {\it ''complex Jacobian''}. 
\vspace{1ex}\par 
In the setting of a  general pseudo $H$-type group we aim to use a similar approach to obtain a fundamental solution for the ultra-hyperbolic operator. For ease of notation we work with 
the full symbol of the operator, which according to (\ref{eq:representation_X_j_Omega}) is given by:
\begin{align*}
	\sigma(\Delta_{r,s})(x, z,  \xi, \eta)
	&=\sum_{j=1}^n - \Big{\{} \xi_j- (\Omega(\eta)x)_j \Big{\}}^2 +  \Big{\{} \xi_{j+n}- (\Omega(\eta)x)_{j+n} \Big{\}}^2  .
\end{align*}
As a first step we start by listing some properties of the  matrix $\Omega(\eta)$ that reflect the pseudo $H$-type structure. To this end we consider for $\eta \in \RR^{r,s}$  the matrix $\rho(\eta) \in 
\mathbb{R}(2n)$ describing the Clifford module action, i.e., we have 
$$ J_{Z(\eta)} X_j = \sum_{l=1}^{2n} \rho(\eta)_{lj} X_l, \quad  \text{\it where} \quad Z(\eta) = \eta_1  Z_1 + \ldots + \eta_{r+s} Z_{r+s} . $$
Definition \ref{definition:Clifford_module_action}, (3) implies that: 
\begin{equation}\label{eq:property_rho_eta}
\rho(\eta)^2  = - \scal{\eta}{\eta}_{r,s} I.
\end{equation}
We put $V_+ := \mathrm{span} \{ X_1 , \ldots, X_n\}$ and $V_- := \mathrm{span} \{ X_{n+1} , \ldots, X_{2n}\}$, where $\langle\cdot\,,\cdot\rangle_V$ is positive definite on $V_+$ and negative 
definite on $V_-$. In what follows we call the elements of $V_+$ positive and of $V_-$ negative. Notice that $\rho=\rho(\eta)$ is linear with respect to $\eta$ by the definition of $J_{Z(\eta)}$. With the 
identity element $I \in \mathbb{R}(n)$ we define the matrix
\begin{equation*}\label{Definition_tau}
\tau:= 
\left(
\begin{array}{cc}
I & 0 \\
0 & -I 
\end{array}
\right)
\in \mathbb{R}(2n).  
\end{equation*}
Let us  express elements  $x,y \in V=V_+ \oplus V_-$ and linear maps $A$ on $V$ with respect to the coordinates $X_j$, $j=1, \ldots, 2n$. Then we obtain with the euclidean inner product 
$\langle \cdot\,, \cdot \rangle$: 
\begin{equation*}
\big{\langle} Ax,y \big{\rangle}_V
= \big{\langle} Ax,\tau y \big{\rangle}
= \big{\langle} x, A^T \tau y\big{\rangle} 
= \big{\langle} x, \tau A^T \tau y \big{\rangle}_V. 
\end{equation*}
Hence we observe that the transpose $A^*$ of $A$ with respect to the non-degenerate bilinear form $\langle \cdot\,, \cdot \rangle_V$ is given by 
\begin{equation}\label{eq:transpose_with_respect_to_V}
A^*= \tau A^T \tau. 
\end{equation}
\par 
The properties of the Clifford module action $J_z$ in Definition \ref{definition:Clifford_module_action} together with the identity (\ref{eq:transpose_with_respect_to_V}) imply that  
$\rho(\eta)=- \rho(\eta)^*$ for all $\eta \in \mathbb{R}^{r,s}$ and therefore:  

\begin{lemma}\label{Lemma_properties_matrices_A_B_C_D}
	For $\eta \in \RR^{r,s}\cong \mathbb{R}^r \times \mathbb{R}^s$ we have 
	\begin{align*}
		\tau \rho(\eta)  = - (\tau \rho(\eta) )^T = - \rho (\eta)^T \tau. 
	\end{align*}
\end{lemma}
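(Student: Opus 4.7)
The plan is to unwind the definition of the $*$-transpose (the transpose with respect to $\langle\cdot,\cdot\rangle_V$) appearing in (\ref{eq:transpose_with_respect_to_V}) and combine it with the skew-symmetry of the Clifford module action. The entire argument amounts to bookkeeping with the three matrices $\tau$, $\rho(\eta)$, and their ordinary transposes, together with the two algebraic facts $\tau^2 = I$ and $\tau^T = \tau$.

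First, I would translate property (\ref{GL_2}) of Definition \ref{definition:Clifford_module_action} into matrix form. In the basis $\{X_1,\ldots,X_{2n}\}$ that identity says $J_{Z(\eta)}$ is skew with respect to $\langle\cdot,\cdot\rangle_V$, which is exactly $\rho(\eta)^* = -\rho(\eta)$.

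Next, I would invoke (\ref{eq:transpose_with_respect_to_V}) with $A = \rho(\eta)$, giving $\tau \rho(\eta)^T \tau = -\rho(\eta)$. Left-multiplying by $\tau$ and using $\tau^2 = I$ yields $\rho(\eta)^T \tau = -\tau \rho(\eta)$, which is already the second equality claimed in the lemma. For the first equality, I would use $\tau^T = \tau$ to write $(\tau\rho(\eta))^T = \rho(\eta)^T \tau$; combined with the identity just derived this gives $(\tau\rho(\eta))^T = -\tau\rho(\eta)$, as required.

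I do not expect any genuine obstacle: everything reduces to a handful of lines of matrix algebra once the skew-symmetry of $J_{Z(\eta)}$ in $\langle\cdot,\cdot\rangle_V$ is translated into matrix form. The one place where care is needed is to keep the two notions of transpose distinct, namely the euclidean $A^T$ and the $\langle\cdot,\cdot\rangle_V$-adjoint $A^* = \tau A^T \tau$; it is precisely the asymmetry encoded by $\tau$ that prevents $\rho(\eta)$ itself from being skew in the ordinary sense and makes the stated identity non-trivial.
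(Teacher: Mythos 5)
Your proof is correct and follows precisely the route the paper has in mind: the paper's entire "proof" is the sentence preceding the lemma, which states that property (\ref{GL_2}) and identity (\ref{eq:transpose_with_respect_to_V}) give $\rho(\eta)^* = -\rho(\eta)$, from which the lemma follows; you have simply spelled out the intermediate matrix algebra (unwinding $\rho(\eta)^* = \tau\rho(\eta)^T\tau$, multiplying by $\tau$, and using $\tau^T = \tau$, $\tau^2 = I$). Nothing is missing and the bookkeeping is accurate.
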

\begin{remark} \label{Remark:properties_A_B_D}
	Let $Z$ be chosen such that $\langle Z,Z \rangle_{r,s} >0$. According to Definition \ref{definition:Clifford_module_action}, (1) it follows that  $J_Z$ maps positive elements to 
	positive elements and negative elements to negative elements, i.e., $J_Z$ 
	leaves $V_+$ and $V_-$ invariant. Moreover, if $Z$ satisfies $\langle Z,Z\rangle_{r,s}<0$ then $J_Z$ maps $V_+$ to $V_-$ and vice versa.
	Together with the previous  lemma this implies  that for $\eta = (\eta_+, \eta_- ) \in \mathbb R^{r}\times\mathbb R^s$ the matrix $\rho(\eta)$ may be written as
\begin{equation} \label{equ:definition_matrix_rho}
	 \rho (\eta) = \begin{pmatrix} A(\eta_+ ) & B( \eta_- ) \\ B(\eta_-)^T &   D( \eta_+ )   \end{pmatrix} , 
\end{equation}	 
such that the following assertions hold true: 
\begin{enumerate}
	\item $A(\eta_+)$ and $D(\eta_+)$ are skew-adjoint, 
	\item $A(\eta_+)^2 = -| \eta_+ |^2 I= D(\eta_+)^2$ and  $B(\eta_-)^T B(\eta_-) = |\eta_-|^2 I = B(\eta_-) B(\eta_-)^T$, 
	\item $A(\eta_+) B(\eta_-) + B(\eta_-) D(\eta_+) = 0$ and $B(\eta_-)^TA(\eta_+)+D(\eta_+)B(\eta_-)^T=0$.
\end{enumerate}
Here $|\cdot|^2$ denotes the euclidean square norm of a vector. 
\end{remark}
For the coefficients $a_{\ell j}^k=\Omega(Z_k)_{\ell j}$ we have  
\begin{align*}
	2 a_{\ell j}^k \scal{Z_k}{Z_k}_{r,s} = \scal{[X_{\ell},X_j]}{ Z_k}_{r,s}  = \scal{J_{Z_k} X_{\ell}}{X_j}_V = \big(\rho(Z_k)\big)_{j\ell}\scal{X_j}{X_j}_V 
\end{align*}
and thus, we obtain for $\eta\in \RR^{r,s}$ that 
\begin{equation*}
\begin{cases}
\big(\Omega(\eta_+)\big)_{\ell j}=\frac{1}{2}\big(\rho(\eta_+)^T\big)_{\ell j} &\text{\it for}\quad j=1,\ldots,n, \\
\big(\Omega(\eta_-)\big)_{\ell j}=\frac{1}{2} \big(\rho(\eta_-)^T\big)_{\ell j} &\text{\it  for}\quad j=n+1,\ldots,2n, \\
\big(\Omega(\eta_-)\big)_{\ell j}=-\frac{1}{2} \big(\rho(\eta_-)^T\big)_{\ell j} &\text{\it  for}\quad j=1,\ldots,n, \\
\big(\Omega(\eta_+)\big)_{\ell j}=-\frac{1}{2} \big(\rho(\eta_+)^T\big)_{\ell j} &\text{\it for}\quad j=n+1,\ldots,2n. 
\end{cases}
\end{equation*}
As a consequence the relation between the matrices $\Omega(\eta)$ and $\rho(\eta)$ is given by: 
\begin{equation}\label{eq:relation_between_Omega_rho_first_place}
	\Omega(\eta) = \frac12 
		\big{[} \tau \rho(\eta)^T \big{]}, \hspace{4ex} \mbox{\it where} \hspace{4ex} \eta \in \mathbb{R}^{r,s}. 
\end{equation}
We collect some properties of the matrix $\Omega(\eta)$: 
\begin{lemma} \label{lemma:properties_of_Omega}
Let $\eta \in \mathbb{R}^{r+s} \cong \mathbb{R}^r \times \mathbb{R}^s$. Then 
\begin{itemize}
\item[(1)] The matrix $\Omega(\eta)$ fulfills the relations: 
\begin{equation*}
	\big{[} \tau \Omega(\eta) \big{]}^2= \big{[} \Omega(\eta) \tau \big{]}^2 =- \frac{\scal{\eta}{\eta}_{r,s}}{4} I. 
\end{equation*}
In particular, since $\Omega(\eta)$ is skew-symmetric we have: 
\begin{equation*}
	\Omega(\eta)^T \tau \Omega(\eta)= \Omega (\eta) \tau \Omega(\eta)^T= \frac{\scal{\eta}{\eta}_{r,s}}{4} \tau. 
\end{equation*}
\item[(2)] If $\scal{\eta}{\eta}_{r,s}>0$, then the matrices $\tau \Omega(\eta)$ and $\Omega(\eta)\tau$ only have purely imaginary eigenvalues 
\begin{equation*}
	\lambda_{\eta, \pm}= \pm i \frac{\sqrt{\scal{\eta}{\eta}_{r,s}}}{2}. 
\end{equation*}
Both eigenvalues $\lambda_{\eta,+}$ and $\lambda_{\eta,-}$ have multiplicity $n$. 
\end{itemize}
\end{lemma}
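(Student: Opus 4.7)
The plan is to reduce everything to three ingredients already at hand: the identification $\Omega(\eta) = \tfrac{1}{2}\tau\rho(\eta)^T$ from (\ref{eq:relation_between_Omega_rho_first_place}), the Clifford relation $\rho(\eta)^2 = -\langle\eta,\eta\rangle_{r,s}I$ from (\ref{eq:property_rho_eta}), and the identity $\tau\rho(\eta) = -\rho(\eta)^T\tau$ from Lemma \ref{Lemma_properties_matrices_A_B_C_D}. Together with $\tau^2 = I$ and the skew-symmetry $\Omega(\eta)^T = -\Omega(\eta)$, these should collapse both parts of the lemma to routine algebra.

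For part (1), I would first rewrite $\tau\Omega(\eta) = \tfrac{1}{2}\rho(\eta)^T$ using $\tau^2 = I$, from which $(\tau\Omega(\eta))^2 = \tfrac{1}{4}(\rho(\eta)^2)^T = -\tfrac{\langle\eta,\eta\rangle_{r,s}}{4}I$ is immediate. For $\Omega(\eta)\tau$, I plan to right-multiply the identity in Lemma \ref{Lemma_properties_matrices_A_B_C_D} by $\tau$ and then take a transpose to obtain $\tau\rho(\eta)^T\tau = -\rho(\eta)$, giving $\Omega(\eta)\tau = -\tfrac{1}{2}\rho(\eta)$ and thus the second square. The identities $\Omega(\eta)^T\tau\Omega(\eta) = \Omega(\eta)\tau\Omega(\eta)^T = \tfrac{\langle\eta,\eta\rangle_{r,s}}{4}\tau$ then fall out by right-multiplying $(\Omega(\eta)\tau)^2 = -\tfrac{\langle\eta,\eta\rangle_{r,s}}{4}I$ by $\tau$ and applying the skew-symmetry of $\Omega(\eta)$.

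For part (2), I would view $(\tau\Omega(\eta))^2 + \alpha^2 I = 0$ with $\alpha := \tfrac{\sqrt{\langle\eta,\eta\rangle_{r,s}}}{2} > 0$ as an annihilating polynomial $X^2 + \alpha^2 = (X - i\alpha)(X + i\alpha)$ with simple roots over $\mathbb{C}$. This forces $\tau\Omega(\eta)$ to be diagonalizable over $\mathbb{C}$ with spectrum inside $\{\pm i\alpha\}$; non-vanishing of $\rho(\eta)$ (from $\rho(\eta)^2 \neq 0$) rules out the trivial case and ensures both eigenvalues occur. Since $\tau\Omega(\eta)$ is real, complex conjugation is an antilinear isomorphism between the $(+i\alpha)$- and $(-i\alpha)$-eigenspaces, so their dimensions are equal; as they sum to $2n$, each equals $n$. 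The identical argument handles $\Omega(\eta)\tau$.

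The arguments are entirely algebraic, so I do not foresee a substantive obstacle; the only real care required is careful bookkeeping of transposes and $\tau$-sandwiches when translating between $\rho(\eta)$ and $\Omega(\eta)$.
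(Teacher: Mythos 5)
Your argument is correct and follows the paper's route: you use $\Omega(\eta)=\tfrac12\tau\rho(\eta)^T$, the Clifford relation $\rho(\eta)^2=-\langle\eta,\eta\rangle_{r,s}I$, and the $\tau$-anticommutation from Lemma \ref{Lemma_properties_matrices_A_B_C_D} to prove (1), and then the annihilating polynomial $\lambda^2+\tfrac{\langle\eta,\eta\rangle_{r,s}}{4}$ for (2), exactly as the paper does. The one place you go beyond the paper is in justifying the multiplicity claim in (2): the paper simply identifies the minimal polynomial and declares the eigenvalue multiplicities, whereas you supply the clean argument that the annihilating polynomial has simple roots so the real matrix is $\CC$-diagonalizable with spectrum in $\{\pm i\alpha\}$, and complex conjugation gives an antilinear isomorphism between the two eigenspaces, forcing both dimensions to equal $n$. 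That extra step is welcome; your remark that $\rho(\eta)\neq 0$ ``rules out the trivial case'' is harmless but redundant, since the conjugation argument already forces the two multiplicities to be equal and hence both equal to $n\geq 1$.
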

\begin{proof}
	From  (\ref{eq:property_rho_eta}) and the above relation between $\Omega(\eta)$ and $\rho(\eta)$ we have: 
	$$ \big{[} \tau \Omega(\eta) \big{]}^2 = \frac14 \big{[} \rho(\eta)^T \big{]}^2 = - \frac{\scal{\eta}{\eta}_{r,s}}{4} I $$
	and likewise we obtain the other assertions in (1). By (1)  the minimal polynomial of $\Omega(\eta)\tau$ and $\tau \Omega(\eta)$ has the form 
$$p_{\eta}(\lambda)= \lambda^2+ \frac{\scal{\eta}{\eta}_{r,s} }{4} = \left( \lambda+i \frac{\sqrt{\scal{\eta}{\eta}_{r,s}}}{2} \right) \left( \lambda- i \frac{\sqrt{\scal{\eta}{\eta}_{r,s}}}{2} \right) 
$$ 
showing the assertion in (2). 
\end{proof}
In our analysis we will make use of the Fourier transform with respect to different  groups of variables. First we determine an operator $\Delta_{r,s}(\eta)$ such that 
\begin{equation*}
    \mathcal{F}_{z \mapsto \eta} ( \Delta_{r,s} \varphi)(\eta) = \Delta_{r,s}(\eta) ( \mathcal{F}_{z \mapsto \eta} \varphi) , \qquad \varphi \in \mathcal{S}(\mathbb{R}^{2n+r+s}) 
\end{equation*}
holds, where $\mathcal{F}_{z \mapsto \eta}$ denotes the Fourier transform with respect to the $z$-variables. Let us express $\Delta_{r,s}$ in the above coordinates 
in (\ref{eq:vector_fields_X_j_Z_k}): 
\begin{align*}
    \Delta_{r,s}
    &= \sum_{j=1}^n \left\{\frac{\partial}{\partial x_j} + \sum_{m=1}^{2n} \sum_{k=1}^{r+s} a_{mj}^kx_m \frac{\partial}{\partial z_k}\right\}^2-  \left\{\frac{\partial}{\partial x_{j+n}} 
        + \sum_{m=1}^{2n} \sum_{k=1}^{r+s} a_{m,j+n}^k  x_m \frac{\partial}{\partial z_k}\right\}^2.  
\end{align*}
Thus, we have 
\begin{align*}
	\Delta_{r,s}(\eta) &= \sum_{j=1}^n \left\{ \frac{\partial}{\partial x_j} + i \sum_{k=1}^{r+s} \sum_{m=1}^{2n} a_{mj}^k \eta_k x_m 
		\right\}^2- \left\{ \frac{\partial}{\partial x_{j+n}} + i  \sum_{k=1}^{r+s} \sum_{m=1}^{2n} a_{m, j+n} \eta_k x_m \right\}^2 \\
    &= \mathcal L - \sum_{j=1}^n \left\{ \sum_{k=1}^{r+s} \sum_{m=1}^{2n} a_{mj}^k \eta_k x_m \right\}^2 +   
    \sum_{j=1}^n \left\{ \sum_{k=1}^{r+s} \sum_{m=1}^{2n} a_{m,j+n}^k \eta_k x_m  \right\}^2 \\
    &\quad + 2i \sum_{j=1}^n \left\{ \sum_{k=1}^{r+s} \sum_{m=1}^{2n} a_{mj}^k \eta_k x_m  \frac{\partial}{\partial x_j} -  
         \sum_{k=1}^{r+s} \sum_{m=1}^{2n} a_{m,j+n}^k \eta_k x_m  \frac{\partial}{\partial x_{j+n}} \right\} ,
\end{align*}
where $\mathcal{L}$ denotes the classical ultra-hyperbolic operator (\ref{Definition_classical_UHO_introduction}). Note that we have used that $a_{jj}^k=0$ due to the skew-symmetry 
of $\Omega_k$. In order to simplify the expression we use that 
\begin{align*}
    \sum_{k=1}^{r+s} \sum_{m=1}^{2n} a_{mj}^k \eta_k x_m =\big{(} \Omega(\eta)^T x\big{)}_{j} ,
\end{align*}
which implies
\begin{align*}
    \Delta_{r,s}(\eta) &=  \mathcal L - \sum_{j=1}^n \Bigl\{ \big{(}\Omega(\eta)^T x\big{)}_{j}^2 - \big{(}\Omega(\eta)^T x\big{)}_{j+n}^2 \Bigr\} + 2 i  \big{(}\tau \Omega(\eta)^T x\big{)} \cdot \nabla_x . 
\end{align*}
Applying Lemma \ref{lemma:properties_of_Omega} we can calculate the sum on the right hand side: 
\begin{align*}
     \sum_{j=1}^n \Bigl\{ (\Omega(\eta)^T x)_{j}^2 - (\Omega(\eta)^T x)_{j+n}^2 \Bigr\} &= 
     \big{(}\tau \Omega(\eta)^T x\big{)}  \cdot \big{(}\Omega(\eta)^T x\big{)}\\
     & = \big{(}\Omega(\eta) \tau \Omega(\eta)^T x \big{)} \cdot x 
     =  \frac14 \scal{\eta}{\eta}_{r,s} P(x) , 
\end{align*}
where we define:
\begin{equation*}
    P(x):=\langle \tau x,x \rangle= \sum_{j=1}^n x_j^2 - x_{j+n}^2. 
\end{equation*}
Finally, we note that $\tau \Omega(\eta)^T = - \frac12 \rho(\eta)^T$, which proves:
\begin{lemma}\label{Lemma_short_form_of_L}
With the above notation  we have:
\begin{equation}\label{eq:hat_Delta_r,s}
     \Delta_{r,s}(\eta) =  \mathcal L - \frac{\scal{\eta}{\eta}_{r,s}}{4} P(x) -  i  x^T \rho(\eta) \nabla_x. 
\end{equation}
\end{lemma}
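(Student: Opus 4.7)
The plan is to carry out the partial Fourier transform in the center variables explicitly and then organize the resulting expression into three pieces: the principal (constant-coefficient) part, a quadratic multiplication operator in $x$, and a first-order term. After the substitution $\partial_{z_k}\mapsto i\eta_k$ in the formula (\ref{eq:representation_X_j_Omega}) for $X_j$, each $X_j^2$ expands into three contributions: the ordinary second derivative $\partial_{x_j}^2$, a cross term linear in $x$ and in $\partial_{x_j}$ picking up a factor $2i$, and a quadratic expression in $x$ with no differentiation. Summing these with signs $+$ for $j=1,\ldots,n$ and $-$ for $j=n+1,\ldots,2n$ immediately yields $\mathcal L$ for the principal part, while the remaining terms can be written compactly in terms of $\Omega(\eta)^T x$, giving exactly the intermediate formula displayed in the text above the lemma.

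The next step is to simplify the quadratic term $\sum_{j=1}^n\{(\Omega(\eta)^Tx)_j^2 - (\Omega(\eta)^Tx)_{j+n}^2\}$. I would rewrite this in matrix form as $\langle \tau\Omega(\eta)^T x,\Omega(\eta)^T x\rangle = \langle \Omega(\eta)\tau\Omega(\eta)^T x,x\rangle$, and then apply Lemma \ref{lemma:properties_of_Omega}(1), which identifies $\Omega(\eta)\tau\Omega(\eta)^T = \tfrac{\langle\eta,\eta\rangle_{r,s}}{4}\tau$. Since $\langle \tau x,x\rangle = P(x)$ by definition, the quadratic contribution collapses to $\tfrac{\langle\eta,\eta\rangle_{r,s}}{4}P(x)$.

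For the first-order term, the task is to convert $2i\,(\tau\Omega(\eta)^T x)\cdot\nabla_x$ into $-i\,x^T\rho(\eta)\nabla_x$. From (\ref{eq:relation_between_Omega_rho_first_place}) we have $\Omega(\eta)=\tfrac12\tau\rho(\eta)^T$, hence $\tau\Omega(\eta)=\tfrac12\rho(\eta)^T$; combining this with the skew-symmetry $\Omega(\eta)^T=-\Omega(\eta)$ gives $\tau\Omega(\eta)^T=-\tfrac12\rho(\eta)^T$. Substituting, the first-order piece becomes $-i(\rho(\eta)^T x)\cdot\nabla_x$, which equals $-i\,x^T\rho(\eta)\nabla_x$ by transposition of the bilinear pairing.

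I do not anticipate any real obstacle: all the ingredients are already in place. The only point where one must be careful is keeping track of the transposes and the two sign sources (the skew-symmetry of $\Omega(\eta)$ and the $\tau$ factor), so that the final coefficient on the drift term comes out as $-i$ rather than $+i$. With this bookkeeping done, the three pieces assemble into (\ref{eq:hat_Delta_r,s}), completing the proof.
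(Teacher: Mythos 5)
Your proposal is correct and follows essentially the same route as the paper: expand $X_j(\eta)^2$ after the partial Fourier transform, identify $\mathcal L$ from the principal part, collapse the quadratic piece $\langle\Omega(\eta)\tau\Omega(\eta)^T x,x\rangle$ to $\tfrac{\langle\eta,\eta\rangle_{r,s}}{4}P(x)$ via Lemma~\ref{lemma:properties_of_Omega}(1), and rewrite the drift term using the identity $\tau\Omega(\eta)^T=-\tfrac12\rho(\eta)^T$ derived from \eqref{eq:relation_between_Omega_rho_first_place} and skew-symmetry. The bookkeeping of signs and transposes is carried out correctly, matching the paper's computation preceding the lemma statement.
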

Let now $\mathcal{F}$ denote the Fourier transform on the full space $\mathbb{R}^{2n+r+s}$. Consider the differential operator $\mathcal{G}_{r,s}$  on $\mathcal{S}(\mathbb{R}^{2n+r+s})$ 
defined through the equation: 
\begin{equation}\label{eq:definition_operator_G_r_s}
\mathcal{F} \circ \Delta_{r,s} = \mathcal{G}_{r,s} \circ \mathcal{F}. 
\end{equation}
Lemma \ref{Lemma_short_form_of_L} allows us to easily  calculate $\mathcal{G}_{r,s}$ in an explicit form: 
\begin{corollary}\label{cor:explicit_form_G_r_s}
The operator $\mathcal{G}_{r,s}$ with (\ref{eq:definition_operator_G_r_s})  acts on $\varphi= \varphi(\xi, \eta_+, \eta_- ) \in \mathcal{S}(\mathbb{R}^{2n+r+s})$ as:  
\begin{equation*}
\mathcal{G}_{r,s}\varphi= -P(\xi) \varphi + \frac{\scal{\eta}{\eta}_{r,s}}{4}  \mathcal{L} \varphi +i \xi^T \rho(\eta)^T \nabla_{\xi} \varphi, 
\end{equation*}
where $\mathcal{L}$ is the ultra-hyperbolic operator in (\ref{Definition_classical_UHO_introduction}) with respect to the variables $\xi \in \mathbb{R}^{2n}$. 
Both operators $\mathcal{G}_{r,s}= \mathcal{F} \circ \Delta_{r,s} \circ \mathcal{F}^{-1}$ and $\Delta_{r,s}$ are formally selfadjoint on $L^2(\mathbb{R}^{2n+r+s})$.  
\end{corollary}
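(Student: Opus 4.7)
The plan is to reduce to the $z$-Fourier-transformed operator $\Delta_{r,s}(\eta)$ supplied by Lemma \ref{Lemma_short_form_of_L} and then to conjugate it by the $x$-Fourier transform. Since $\mathcal{F}=\mathcal{F}_{x\to\xi}\circ \mathcal{F}_{z\to\eta}$, the defining equation \eqref{eq:definition_operator_G_r_s} rewrites as $\mathcal{G}_{r,s} = \mathcal{F}_{x\to\xi}\circ \Delta_{r,s}(\eta)\circ \mathcal{F}_{x\to\xi}^{-1}$, so it suffices to Fourier-transform each of the three summands $\mathcal{L}$, $-\tfrac{\langle \eta,\eta\rangle_{r,s}}{4}P(x)$, and $-i\, x^T\rho(\eta)\nabla_x$ of the formula from Lemma \ref{Lemma_short_form_of_L}.

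The first two pieces are immediate from the standard rules $\mathcal{F}_{x\to\xi}(\partial_{x_j}\varphi)=i\xi_j\hat\varphi$ and $\mathcal{F}_{x\to\xi}(x_j\varphi)=i\partial_{\xi_j}\hat\varphi$: the classical ultra-hyperbolic operator $\mathcal{L}$ is sent to multiplication by $-P(\xi)$, while multiplication by $P(x)$ becomes $-\mathcal{L}$ acting in the $\xi$-variables. The real work lies in the drift term. Writing it as $\sum_{i,j}\rho(\eta)_{ij}\,x_i\partial_{x_j}$ and applying the Fourier rules termwise yields
\begin{equation*}
\mathcal{F}_{x\to\xi}\bigl(x_i\partial_{x_j}\varphi\bigr) \;=\; i\partial_{\xi_i}\bigl(i\xi_j\hat\varphi\bigr) \;=\; -\delta_{ij}\hat\varphi \;-\; \xi_j\partial_{\xi_i}\hat\varphi,
\end{equation*}
so summing against $\rho(\eta)_{ij}$ produces the spurious scalar $-\operatorname{tr}(\rho(\eta))\hat\varphi$ together with $-\xi^T\rho(\eta)^T\nabla_\xi\hat\varphi$. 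The key observation is that $\operatorname{tr}(\rho(\eta))=0$: by the block decomposition in Remark \ref{Remark:properties_A_B_D} the diagonal blocks $A(\eta_+)$ and $D(\eta_+)$ of $\rho(\eta)$ are skew-symmetric and hence traceless, while the off-diagonal blocks contribute nothing. Restoring the prefactor $-i$ turns $-\xi^T\rho(\eta)^T\nabla_\xi\hat\varphi$ into exactly the stated term $+i\xi^T\rho(\eta)^T\nabla_\xi\hat\varphi$, which completes the derivation of the formula for $\mathcal{G}_{r,s}$.

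For the formal self-adjointness claim, I would verify that each left-invariant vector field $X_j$ from \eqref{eq:vector_fields_X_j_Z_k} is skew-adjoint on $L^2(\mathbb{R}^{2n+r+s})$. Written as a first-order operator $X_j=\sum_\alpha f_\alpha\partial_\alpha$ with no zeroth-order part, its formal adjoint is $X_j^*=-X_j-\operatorname{div}(f)$. In our case the coefficient of $\partial_{x_\ell}$ is the constant $\delta_{\ell j}$, and the coefficient of $\partial_{z_k}$ is a linear function of the $x$-variables alone; consequently both contribute $0$ to the divergence. Thus $X_j^*=-X_j$, so each $X_j^2$ is formally self-adjoint and so is $\Delta_{r,s}$. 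Since $\mathcal{F}$ is unitary on $L^2(\mathbb{R}^{2n+r+s})$, the self-adjointness transfers to $\mathcal{G}_{r,s}=\mathcal{F}\Delta_{r,s}\mathcal{F}^{-1}$.

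The only genuinely non-routine point in the argument is the cancellation $\operatorname{tr}(\rho(\eta))=0$: without it an unwanted zeroth-order multiplication term would appear in $\mathcal{G}_{r,s}$, destroying the clean symmetric structure. Everything else amounts to bookkeeping with Fourier identities and an integration by parts.
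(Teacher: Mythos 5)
Your proof is correct and follows essentially the same route as the paper, namely conjugating the formula of Lemma \ref{Lemma_short_form_of_L} by $\mathcal{F}_{x\to\xi}$ to obtain $\mathcal{G}_{r,s}$, and deducing formal self-adjointness from the fact that $\Delta_{r,s}$ is a signed sum of squares of skew-adjoint vector fields. The one step the paper leaves implicit --- the cancellation $\operatorname{tr}\bigl(\rho(\eta)\bigr)=0$ coming from the operator-ordering term in $\mathcal{F}_{x\to\xi}(x_i\partial_{x_j})$ --- you identify and justify correctly via the block decomposition of Remark \ref{Remark:properties_A_B_D} (equivalently, it follows from Lemma \ref{Lemma_properties_matrices_A_B_C_D} since $\tau\rho(\eta)$ skew-symmetric gives $\operatorname{tr}\rho(\eta)=\operatorname{tr}(-\tau\rho(\eta)^T\tau)=-\operatorname{tr}\rho(\eta)$).
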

\begin{proof}
The last statement follows from the observation that $\Delta_{r,s}$ is formed by $2n$ squares of skew-symmetric vector fields. 
\end{proof}
\section{From the Sub-Laplacian to the ultra-hyperbolic operator}
\label{Section_From_Sub-Laplacian_to_UHO}
 From the above considerations we can express the full symbol of the ultra-hyperbolic operator $\Delta_{r,s}$ as follows: 
\begin{align*}
	\sigma(\Delta_{r,s})(x, z,  \xi, \eta)
	&= - P(\xi) - \frac{\scal{\eta}{\eta}_{r,s}}{4} P(x) +    x^T  \rho(\eta)  \xi.   
\end{align*}
Throughout the paper  we use the decomposition $x= ( x_+ ,  x_-) \in \RR^n \times \RR^n$ and $z = (z_+, z_-) \in\mathbb R^r\times\mathbb R^s$. As in the case of the Heisenberg algebra 
we formally transform variables and put 
\begin{equation}\label{eq:variable_transform_general_case}
	\left\{ \begin{array}{rl} y_+ &= - i x_+ ,  \\  
	y_- &= x_- , \\
	w_+ &= z_+ , \\
	w_- &= - i z_- .
	\end{array} \right. 
\end{equation}
By  $(\zeta_+ , \zeta_-  , \vartheta_+, \vartheta_- ) = ( i\xi_+, \xi_-, \eta_+, i \eta_-) $ we denote the variables dual to $(y_+, y_- w_+, w_-)$. 
According to the matrix representation in (\ref{equ:definition_matrix_rho}) we can be interpreted $\rho(\eta)$ as a $\mathbb{C}$-linear family of linear maps on $\mathbb{C}^{2n}$. 
Writing the symbol $\sigma(\Delta_{r,s})$ in new coordinates, we obtain 
\begin{align*}
	\sigma(\Delta_{r,s})(y,w,\zeta , \vartheta)  
		  &= |\zeta|^2  +   \frac{|\vartheta|^2}{4} |y|^2  +  ( iy_+  , y_-)  \rho(\vartheta_+, - i \vartheta_- ) \begin{pmatrix} - i \zeta_+ \\ \zeta_- \end{pmatrix}   \\
		  &= |\zeta|^2  +   \frac{|\vartheta|^2}{4} |y|^2  + y^T \Xi(\vartheta)  \zeta , 
\end{align*}
where as before $|\cdot|$ denotes the Euclidean norm and 
\begin{align*}
      \Xi(\vartheta) :=   \begin{pmatrix} A(\vartheta_+ ) & B( \vartheta_- ) \\ -B(\vartheta_-)^T &   D(\vartheta_+ )    \end{pmatrix} , \qquad \vartheta = (\vartheta_+, \vartheta_-) . 
\end{align*}
Using the properties of the matrices $A,B,D$ in Remark \ref{Remark:properties_A_B_D}  we obtain that  $\widetilde{\Xi}(\vartheta):= \frac{1}{2}\Xi(\vartheta)$ is skew-symmetric and 
$\widetilde{\Xi}(\vartheta)^2 = - \frac14 | \vartheta|^2 \mathrm{I}$. As above we have 
\begin{align*}
    \sigma(\Delta_{r,s})(y,w,\zeta , \vartheta)  &=  \sum_{j=1}^{2n} \left\{ \zeta_j + \Big{(}\widetilde{\Xi}(\vartheta)^T  y\Big{)}_j \right\}^2 , 
\end{align*}
which is exactly the full symbol of the corresponding sub-Laplacian $ \Delta_{\mathrm{sub}}$ of a two-step nilpotent Lie group with structure matrix $\widetilde{\Xi}(\vartheta)$. 
In the next step we calculate a fundamental solution of the sub-Laplacian $\Delta_{\mathrm{sub}}$ from the well known expression of its heat kernel (see \cite{BGG,OvFuIw}).
\begin{remark}
Let $\frac{\partial}{\partial t} + \frac12 \Delta_{\mathrm{sub}}$ be the sub-elliptic heat operator. Then the heat kernel $$k : (0,\infty) \times \RR^{2n+r+s} \to \RR$$ uniquely  is defined 
by the conditions 
\begin{enumerate}
 	\item $(\frac{\partial}{\partial t} + \frac12 \Delta_{\mathrm{sub}}) k =0$ and 
 	\item $\lim_{t \downarrow 0} k(t, \cdot ) = \delta_{0}$, in the sense of distributions. 
\end{enumerate}
\end{remark}
In what follows we will write $k_t(\cdot)$ instead of $k(t, \cdot)$. For a 2-step nilpotent Lie group it is well known (see e.g. \cite{BGG} or \cite[Theorem 10.2.7]{OvFuIw}) that 
the heat kernel explicitly can be written explicitly as an integral:
\begin{equation*}
	k_t(y,w)= \frac{1}{(2\pi t)^{n+r+s}} \int_{\mathbb{R}^{r+s}} e^{- \frac{f(y,w,\vartheta)}{t}} W(|\vartheta|) \; \dd \vartheta , 
 \end{equation*}
where 
\begin{equation*}
	W(|\vartheta| ) = \sqrt{ \det \frac{\widetilde{\Xi} (i \vartheta)}{\sinh(\widetilde{\Xi} (i \vartheta))}}  
  = \left( \frac{\frac{|\vartheta|}{2}}{\sinh\big{(}\frac{|\vartheta|}{2}\big{)}}\right)^n,\label{W_r_=_0}
\end{equation*}
is the so-called {\it volume element} and 
\begin{align*}
	f(y,w, \vartheta)&=i \scal{\vartheta}{w} + \frac{|\vartheta|}{4} \coth\Big{(}\frac{|\vartheta|}{2}\Big{)} \sum_{j=1}^{2n} y_j^2 
\end{align*}
is the {\it action function}. If we put $p:=n+r+s-1$, then the integral 
\begin{align*}
	&K_{\mathrm{sub}} (y,w)	=  \int_0^{\infty} k_{2t}(y,w) \;\dd t  \\
	&=\frac{1}{(4\pi)^{p+1}}\int_0^{\infty} \int_{\mathbb{R}^{r+s}} \frac{1}{t^{p+1}}  
	     \exp \left\{ - \frac{1}{2t} \Big{[} i \langle \vartheta,w \rangle + \frac{|\vartheta|}{4} \coth \left( \frac{|\vartheta |}{2} \right) \sum_{j=1}^{2n} y_j^2 \Big{]} \right\} W(|\vartheta|) \; \dd \vartheta \; \dd t \notag
\end{align*}
gives formally a fundamental solution of the sub-Laplacian. Indeed, we have 
$$  \Delta_{\mathrm{sub}} K_{\mathrm{sub}} = \int_0^{\infty} \Delta_{\mathrm{sub}} k_{2t} \; \dd t   = 
	- \int_0^{\infty} \frac{\partial}{\partial t} k_{2t} \; \dd t =-  \Bigr[ k_{2t} \Bigr]_0^\infty= \delta_0 . $$
	\par 
In the following steps we will perform a series of formal calculations changing from real to complex variables in the above  integral representation of $K_{\mathrm{sub}} (y,w)$. Convergence of the 
integral expressions is not guaranteed in each step. However, this formal consideration produces a distribution which in Section \ref{section_3}  will rigorously be shown to be a fundamental solution 
of the ultra-hyperbolic operator when $r=0$ and $s>0$. If we use the change of variables (\ref{eq:variable_transform_general_case}) in $K_{\mathrm{sub}} (y,w)$, then we obtain the new kernel: 
\begin{align*}
       &K_{r,s}(x,z)=\\
       &=\frac{i^{n+r+s}}{(4 \pi)^{p+1}} \int_0^{\infty} \int_{\mathbb{R}^{r+s}} \frac{1}{t^{p+1}} \exp \left\{ -\frac{1}{2t} \Big{[} i \langle \vartheta_+,z_+ \rangle + \langle \vartheta_-,z_- \rangle - \kappa(|\vartheta|) P(x) \Big{]} \right\}
W(|\vartheta|) \; \dd \vartheta \; \dd t, 
\end{align*}
where 
\begin{equation}\label{eq:definition_P_kappa_vartheta}
     P(x):= \sum_{j=1}^n x_j^2-x_{n+j}^2\hspace{4ex} \mbox{\it and } \hspace{4ex} \kappa(|\vartheta|):= \frac{|\vartheta|}{4} \coth \left( \frac{|\vartheta|}{2} \right). 
\end{equation}
The factor $i^{n+r+s}$ in front of the integral should be interpreted as a {\it complex Jacobian}. Note that $\kappa(\rho)= \frac{\rho}{4} \coth(\frac{\rho}{2})$ continuously extends to $\rho=0$ if we define: 
$\kappa(0):=\frac{1}{2}$. 
\vspace{1mm}\par 
In what follows we restrict ourself to the case $r=0$. The existence of a fundamental solution of $\Delta_{r,s}$ for $r>0$ will be discussed in Section \ref{s_r_>_0}. We then have 
$\vartheta= \vartheta_-, z=z_- \in \mathbb{R}^s$ and 
\begin{equation*}
       K_{0,s}(x,z)
       =\frac{i^{n+s}}{(4 \pi)^{n+s}} \int_0^{\infty} \int_{\mathbb{R}^{s}} \frac{1}{t^{n+s}} \exp \left\{ -\frac{1}{2t} \Big{[}  \langle \vartheta, z  \rangle - \kappa(|\vartheta|) P(x) \Big{]} \right\}W(|\vartheta|) \; \dd \vartheta \; \dd t.
\end{equation*}
Take a function $\varphi \in \mathcal{S}(\mathbb{R}^{2n+s})$ and consider a formal integral:
\begin{equation*}\label{eq:integral_K_0_s_against_varphi}
     \begin{aligned}
     &\int_{\mathbb{R}^{2n+s}} \varphi(x,z) K_{0,s}(x,z) \; \dd x \; \dd z= \\
     &=\frac{i^{n+s}}{(4 \pi)^{n+s}} \int_0^{\infty} \int_{\mathbb{R}^s} \int_{\mathbb{R}^{2n+s}} \frac{1}{t^{n+s}} e^{ -\frac{1}{2t} \big{[}  \langle \vartheta, z  \rangle - \kappa(|\vartheta|) P(x) \big{]} }W(|\vartheta|)
       \varphi(x,z)  \; \dd x \; \dd z \; \dd \vartheta \; \dd t\\
     &=(*). 
     \end{aligned}
\end{equation*}
We change variables $\vartheta \rightarrow - \vartheta \in \mathbb{R}^s$ and formally we replace the integration over $t \in \mathbb{R}$ by an integration over $it$ where $t \in \mathbb{R}$:
\begin{equation*}
(*)=\frac{i}{(4 \pi)^{n+s}} \int_0^{\infty} \int_{\mathbb{R}^s} \int_{\mathbb{R}^{2n+s}} \frac{1}{t^{n+s}} e^{ \frac{i}{2t} \big{[}  \langle - \vartheta, z  \rangle - \kappa(|\vartheta|) P(x) \big{]}}W(|\vartheta|)
       \varphi(x,z) \; \dd x \; \dd z\;  \dd \vartheta \; \dd t. 
\end{equation*}
\par 
Now, we perform a Fourier transform $\mathcal{F}_{x \rightarrow \xi}$  in the last integral with respect to $x \in \mathbb{R}^{2n}$. Lemma
 \ref{Lemma_Fourier_transformLexpontion_quadratic_form} below provides a useful identity, which we apply in the calculation: 
\begin{lemma}\label{Lemma_Fourier_transformLexpontion_quadratic_form}
	Let $\beta_j \in \CC \setminus \{0\}$, $j=1,\ldots,2n$, with  $\textup{Re} (\beta_j) \ge  0$. Then 
	\begin{align*}
		\mathcal{F}^{-1}_{x \to \xi} \left[ \exp \left\{ - \frac12 \sum_{j=1}^{2n} x_j^2 \beta_j \right\} \right] (\xi) 
			&= \left( \prod_{j=1}^{2n}\frac{1}{\sqrt{\beta_j}} \right) \exp \left\{ - \frac12 \sum_{j=1}^{2n} \frac{\xi^2}{\beta_j} \right\} . 
	\end{align*}
	In this formula the holomorphic branch of the square root on $\CC \setminus (-\infty,0]$  is chosen such that $\sqrt{\beta_j}>0$ for $\beta_j > 0$.
\end{lemma}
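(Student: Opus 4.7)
The plan is to reduce the statement to a one-dimensional Gaussian Fourier transform identity, then extend from positive real parameters to the complex half-plane $\{\beta \in \mathbb{C} : \textup{Re}(\beta) \geq 0,\ \beta \neq 0\}$ by analytic continuation.

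First I would use the separability of the exponential: since $\exp\{-\frac{1}{2}\sum_j \beta_j x_j^2\} = \prod_j \exp\{-\frac{1}{2}\beta_j x_j^2\}$ and the inverse Fourier transform on $\mathbb{R}^{2n}$ factors as a product of one-dimensional inverse Fourier transforms, it suffices to prove the identity
\begin{equation*}
\mathcal{F}^{-1}_{x \to \xi}\bigl[e^{-\frac{1}{2}\beta x^2}\bigr](\xi) = \frac{1}{\sqrt{\beta}}\, e^{-\frac{1}{2}\xi^2/\beta}
\end{equation*}
for a single variable and then take the tensor product.

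Second, for $\beta > 0$ real the identity is the classical Gaussian Fourier transform formula, obtained by completing the square: one writes $-\frac{1}{2}\beta x^2 + i x \xi = -\frac{\beta}{2}\bigl(x - \tfrac{i\xi}{\beta}\bigr)^2 - \tfrac{\xi^2}{2\beta}$ and evaluates the remaining real Gaussian integral against a contour shift, which is justified by the absolute integrability of the shifted integrand. This gives the formula with the positive real square root.

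Third, I would extend to complex $\beta$ with $\textup{Re}(\beta) > 0$ by analytic continuation. For each fixed $\xi$ both sides of the claimed identity are holomorphic functions of $\beta$ on the open right half-plane: the left-hand side defines a holomorphic function since $e^{-\frac{1}{2}\beta x^2}$ is absolutely integrable for $\textup{Re}(\beta) > 0$ and depends holomorphically on $\beta$, while the right-hand side is manifestly holomorphic once we use the principal branch of $\sqrt{\cdot}$ on $\mathbb{C} \setminus (-\infty,0]$. Since the two sides agree on the positive real axis, the identity theorem forces agreement throughout the open right half-plane.

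Finally, for the boundary case $\textup{Re}(\beta) = 0$, $\beta \neq 0$ — which is the main obstacle, since the Gaussian ceases to be integrable and the inverse Fourier transform must be interpreted in the sense of tempered distributions — I would pass to the limit $\beta = i\tau + \varepsilon$ with $\varepsilon \downarrow 0$. For each $\varepsilon > 0$ the already-proven identity holds pointwise; the left-hand side converges to $\mathcal{F}^{-1}[e^{-i\tau x^2/2}]$ in $\mathcal{S}'(\mathbb{R})$ because convergence in $\mathcal{S}'$ of the exponentials implies convergence of their Fourier transforms, and the right-hand side converges pointwise to $\frac{1}{\sqrt{i\tau}} e^{-\xi^2/(2 i \tau)}$ with the correct branch, yielding the desired formula. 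Tensoring the one-dimensional identities over $j = 1, \ldots, 2n$ completes the proof.
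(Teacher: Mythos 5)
Your proposal is correct and follows essentially the same route as the paper, which simply remarks that the identity is elementary for $\beta_j > 0$ and then extends to $\textup{Re}(\beta_j) \ge 0$ by analytic continuation and continuity. You have merely filled in the standard details (factorization into one-dimensional Gaussians, holomorphy on the open half-plane, passage to the imaginary axis via $\mathcal{S}'$-limits) of the same argument.
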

In fact, the above identity follows easily for $\beta_j > 0$. By analytic continuation and continuity we observe that the equality holds true for $\textup{Re}(\beta_j) \ge 0$.  
In our case we set 
\begin{equation*}
      \beta_j:= i 
           \begin{cases}
                  \frac{ \kappa (|\vartheta|)}{t}, & \textup{\it if} \;\; j=1, \ldots, n, \\
                  - \frac{\kappa ( |\vartheta|)}{t}, & \textup{\it if} \: \:  j=n+1, \ldots, 2n. 
          \end{cases}
\end{equation*}
Note that $\beta_{j+n} = \overline{\beta_j}$ for $j=1, \ldots, n$ and thus, 
\begin{equation*}
\prod_{j=1}^{2n} \frac{1}{\sqrt{\beta_j}} =  \prod_{j=1}^{n} |\beta_j|^{-1} = |\beta_1|^{-n} =\frac{t^n}{ \kappa\big{(}|\vartheta|\big{)}^{n}}= \frac{(4t)^n}{|\vartheta|^n} \tanh^n \left( \frac{|\vartheta|}{2} \right). 
\end{equation*} 
Lemma \ref{Lemma_Fourier_transformLexpontion_quadratic_form} shows for all $t>0$: 
\begin{equation*}
\mathcal{F}^{-1}_{x \rightarrow \xi} \left[\exp \left\{-\frac{i\kappa(|\vartheta|) P(x)}{2t}\right\} \right] (\xi) 
=  \frac{t^n}{\kappa(|\vartheta|)^n} \exp \left\{ \frac{it}{2} \; \frac{P(\xi)}{\kappa(|\vartheta|)} \right\}. 
\end{equation*}
We use this formula on the right hand side of ($*$) to obtain: 
\begin{equation*}
(*)= \frac{i(2\pi)^{\frac{s}{2}}}{(4 \pi)^{n+s}} \int_0^{\infty} \int_{\mathbb{R}^{2n+s}} \frac{1}{t^s}  \frac{W(|\vartheta|)}{\kappa(|\vartheta|)^n} 
\exp \left\{ \frac{it}{2} \; \frac{P(\xi)}{\kappa(|\vartheta|)} \right\}\left[ \mathcal{F}
         \varphi \right]\Big{(}\xi,\frac{\vartheta}{2t}\Big{)} \; \dd  \xi \; \dd z \; \dd\vartheta \; \dd t,
\end{equation*}
where $\mathcal{F}$ denotes the Fourier transform on $\mathbb{R}^{2n+s}$. Recall that for $t >0$: 
\begin{equation*}\label{eq:formula_W_theta_divided_by_kappa}
    \frac{W(t)}{\kappa(t)^n} 
    = \left(\frac{\frac{t}{2}}{\sinh \big{(} \frac{t}{2} \big{)}} 
        \right)^n \; \frac{4^n}{t^n} \tanh\left(\frac{t}{2} \right)^n=\frac{2^n}{\cosh \big{(}\frac{t}{2} \big{)}^n}. 
\end{equation*}
\par 
We insert the last relation and change variables $\vartheta \rightarrow 2t\vartheta$ in the integration over $\mathbb{R}^s$ to obtain: 
\begin{equation*}
         (*)=\frac{i}{(2\pi)^{n+\frac{s}{2}}} \int_0^{\infty} \int_{\mathbb{R}^{2n+s}} \frac{[\mathcal{F}\varphi](\xi, \vartheta)}{\cosh \big{(} t|\vartheta| \big{)}^n} 
                 \exp\left\{ i \frac{\tanh(t |\vartheta|)}{|\vartheta|} \; P(\xi) \right\} \; \dd \xi \; \dd \vartheta \; \dd t. 
\end{equation*}
Finally, consider the change of variables $t \mapsto \frac{t}{|\vartheta|}$ in the integration over $\mathbb{R}_+$. We obtain the expression: 
\begin{multline*}
\int_{\mathbb{R}^{2n+s}} \varphi(x,z)K_{0,s}(x,z) \; \dd x \; \dd z=\\
=\frac{i}{(2\pi)^{n+\frac{s}{2}}} \int_{\mathbb{R}^{2n+s}} \int_0^{\infty}\frac{1}{|\vartheta|\cosh^n t}\exp \left\{ i \frac{\tanh t }{|\vartheta|} P(\xi) \right\} \; \dd t 
 [\mathcal{F}\varphi](\xi, \vartheta) \; \dd \xi \; \dd \vartheta. 
\end{multline*}
Hence let us define the kernel $q(\xi, \vartheta)$ for $\vartheta \ne 0$ by:
\begin{equation}\label{kernel_q}
q(\xi, \vartheta):=\frac{i}{(2\pi)^{n+\frac{s}{2}}} \int_0^{\infty}\frac{1}{|\vartheta|\cosh^n t }\exp \left\{ i \frac{\tanh t}{|\vartheta|} \: P(\xi) \right\} \;\dd t. 
\end{equation}
\section{The fundamental solution of $\Delta_{0,s}$}
\label{section_3}
Assuming  $s > 1$ we observe that the function $q$ in (\ref{kernel_q}) satisfies: 
$$q \in L^1\big{(}\RR^{2n+s}; ( 1+ |\xi|^2 + |\vartheta|^2)^{-N} \; \dd (\xi ,\vartheta)\big{)}$$ 
for sufficiently large $N >0$. In particular, it defines a tempered  distribution on $\RR^{2n+s}$ and we may define $K_{0,s} \in \mathcal S'(\RR^{2n+s})$. 
Let $\mathcal{F}$ denote the Fourier transform on $\mathbb{R}^{2n+s}$. Given a rapidly decreasing function $\varphi \in \mathcal S(\RR^{2n+s})$  we put  
\begin{align}
	 K_{0,s}\big{(}\varphi\big{)}
	:=\int_{\RR^{2n+s}} q(\xi, \vartheta) \big{[} \mathcal{F}\varphi\big{]} (\xi, \vartheta) \; \dd \xi \; \dd \vartheta . 
	\label{Definition_K_0_s}
\end{align}
\begin{theorem}\label{th:fundamental_solution}
	The distribution $K_{0,s}$ defines a fundamental solution of $\Delta_{0,s}$, i.e., we have $\Delta_{0,s} K_{0,s} = \delta_0$ or equivalently 
	\begin{align*}
		  K_{0,s} (\Delta_{0,s} \varphi)  = \varphi(0) , \hspace{4ex}  \mbox{\it for all} \hspace{4ex}  \varphi \in \mathcal S(\RR^{2n+s}) .
	\end{align*}
\end{theorem}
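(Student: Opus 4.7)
The strategy is to conjugate $\Delta_{0,s}$ through the Fourier transform and compute $\mathcal{G}_{0,s}q$ distributionally. By Corollary \ref{cor:explicit_form_G_r_s} we have $\mathcal{F}\circ \Delta_{0,s}=\mathcal{G}_{0,s}\circ\mathcal{F}$ with $\mathcal{G}_{0,s}$ formally selfadjoint, so
$$K_{0,s}(\Delta_{0,s}\varphi) = \int_{\mathbb{R}^{2n+s}} q(\xi,\vartheta)\,\mathcal{G}_{0,s}[\mathcal{F}\varphi](\xi,\vartheta)\,\dd\xi\,\dd\vartheta = \langle \mathcal{G}_{0,s}q,\,\mathcal{F}\varphi\rangle,$$
where $\mathcal{G}_{0,s}q$ is meant in $\mathcal{S}'(\mathbb{R}^{2n+s})$. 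I will show $\mathcal{G}_{0,s}q=(2\pi)^{-(2n+s)/2}$ as a tempered distribution; Fourier inversion at the origin,
$\varphi(0)=(2\pi)^{-(2n+s)/2}\!\int \mathcal{F}\varphi\,\dd\xi\,\dd\vartheta$, then closes the claim.

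\textbf{The key identity.} Write $q=\int_0^\infty F(t,\xi,\vartheta)\,\dd t$ with $F(t,\xi,\vartheta)$ the integrand in \eqref{kernel_q}. I apply $\mathcal{G}_{0,s}$ to $F$ and compare with $\partial_t F$. Since $r=0$ one has $\scal{\vartheta}{\vartheta}_{0,s}=-|\vartheta|^2$, and by Remark \ref{Remark:properties_A_B_D} the matrix $\rho(\vartheta)$ is block antidiagonal, so $\rho(\vartheta)\tau$ is antisymmetric and $\xi^T\rho(\vartheta)^T\tau\xi=0$. Because $\nabla_\xi e^{i\alpha P(\xi)}=2i\alpha\tau\xi\,e^{i\alpha P(\xi)}$ (as $\nabla P=2\tau\xi$), the transport term $i\xi^T\rho(\vartheta)^T\nabla_\xi$ in $\mathcal{G}_{0,s}$ acts trivially on $F$. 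Using $\mathcal{L}e^{i\alpha P(\xi)}=(4in\alpha-4\alpha^2P(\xi))e^{i\alpha P(\xi)}$ with $\alpha=\tanh(t)/|\vartheta|$, the remaining terms of $\mathcal{G}_{0,s}$ combine to
$$\mathcal{G}_{0,s}F = -P(\xi)\operatorname{sech}^2(t)\,F - in|\vartheta|\tanh(t)\,F = i|\vartheta|\,\partial_t F,$$
where the last equality is obtained by a direct differentiation of $F$ in $t$.

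\textbf{Truncation and passage to the limit.} For $R>0$ set $q_R(\xi,\vartheta):=\int_0^R F(t,\xi,\vartheta)\,\dd t$. For each fixed $\vartheta\neq 0$ this is $C^\infty$ in $\xi$ with all derivatives uniformly controlled on $t\in[0,R]$, so differentiation under the integral gives
$$\mathcal{G}_{0,s}q_R = \int_0^R \mathcal{G}_{0,s}F\,\dd t = i|\vartheta|\bigl(F(R,\xi,\vartheta)-F(0,\xi,\vartheta)\bigr).$$
Now $F(0,\xi,\vartheta)=i\,[(2\pi)^{n+s/2}|\vartheta|]^{-1}$, while $|F(R,\xi,\vartheta)|\le [(2\pi)^{n+s/2}|\vartheta|\cosh^n R]^{-1}$. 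The hypothesis $s>1$ guarantees $1/|\vartheta|\in L^1_{\mathrm{loc}}(\mathbb{R}^s)$, hence by dominated convergence $q_R\to q$ in $\mathcal{S}'(\mathbb{R}^{2n+s})$, and the right-hand side above converges in $\mathcal{S}'$ to $-i|\vartheta|F(0,\xi,\vartheta)=(2\pi)^{-(2n+s)/2}$. Therefore $\mathcal{G}_{0,s}q=(2\pi)^{-(2n+s)/2}$, and pairing with $\mathcal{F}\varphi$ produces $\varphi(0)$ by Fourier inversion as explained above.

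\textbf{Main obstacle.} The decisive points are (i) the algebraic identity $\mathcal{G}_{0,s}F=i|\vartheta|\partial_t F$, which crucially uses the $r=0$ antidiagonal structure of $\rho(\vartheta)$ to kill the first-order transport term---for $r>0$ this term survives and the present route necessarily breaks down, in harmony with the nonexistence theorem in Section \ref{s_r_>_0}; and (ii) the rigorous justification of the two distributional limits $q_R\to q$ and $\mathcal{G}_{0,s}q_R\to\mathcal{G}_{0,s}q$, which demands careful control of the $|\vartheta|^{-1}$ singularity at the origin and is precisely the point where the hypothesis $s>1$ enters.
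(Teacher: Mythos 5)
Your proof is correct and follows essentially the same route as the paper: conjugate $\Delta_{0,s}$ by the full Fourier transform, compute the transformed operator applied to $q$ as a total $t$-derivative of the integrand, and use $s>1$ to control the $|\vartheta|^{-1}$ singularity. The only cosmetic differences are that the paper regularizes by cutting off $|\vartheta|>\varepsilon$ whereas you truncate the $t$-integral (both are valid), and the paper correctly passes to the transpose $\overline{\mathcal{G}}_{0,s}$ after integration by parts rather than invoking formal self-adjointness of $\mathcal{G}_{0,s}$ --- a distinction that happens not to matter here because the transport term $i\xi^T\rho(\vartheta)^T\nabla_\xi$ annihilates $q$, which is the same cancellation that makes your identity $\mathcal{G}_{0,s}F = i|\vartheta|\,\partial_t F$ hold.
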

Since our derivation of Theorem \ref{th:fundamental_solution} in Section \ref{Section_From_Sub-Laplacian_to_UHO} was purely formal, we have to provide a rigorous proof. 
To this end we have to show that:  
\begin{equation*}
\int_{\RR^{2n+s}} q(\xi, \vartheta) \big{[} \mathcal{F} \Delta_{0,s} \varphi \big{]}(\xi, \vartheta) \; \dd \vartheta \; \dd \xi  =\varphi(0)=
	\frac{1}{(2\pi)^{n+s/2}} \int_{\RR^{2n+s}}  \big{[} \mathcal{F}\varphi\big{]} (\xi, \vartheta) \; \dd \xi \; \dd \vartheta . 
\end{equation*}
\par 
Let $\mathcal{G}_{0,s}$ be the differential operator on $\mathcal{S}(\mathbb{R}^{2n+s})$ defined through the equation (\ref{eq:definition_operator_G_r_s}). 
As was shown in Corollary \ref{cor:explicit_form_G_r_s} the operator $\mathcal{G}_{0,s}$ has the explicit form: 
\begin{equation*}
        \mathcal{G}_{0,s}   = - P(\xi) -  \frac{ |\vartheta|^2}{4}  \mathcal L +  i  \xi^T \rho(\vartheta)^T \nabla_\xi . 
\end{equation*}
Put 
\begin{equation*}
\overline{\mathcal{G}}_{0,s}   := - P(\xi) -  \frac{ |\vartheta|^2}{4}  \mathcal L -  i  \xi^T \rho(\vartheta)^T \nabla_\xi . 
\end{equation*}
As $q \in C^\infty(\RR^{2n} \times \RR^s \setminus \{0\})$ we have for each $\varphi \in \mathcal S(\RR^{2n+s})$ by partial integration: 
\begin{align*}
	\int_{\RR^{2n+s}} q(\xi, \vartheta)  \big{[}\mathcal{F} \circ  \Delta_{0,s} \varphi\big{]} &(\xi, \vartheta) \; \dd \vartheta \; \dd \xi =\\ 
	=&\lim_{\varepsilon \to 0} \int_{\RR^{2n}} \int_{|\vartheta|>\varepsilon} q(\xi, \vartheta)  \big{[}\mathcal{G}_{0,s} \circ \mathcal{F} \varphi\big{]} (\xi, \vartheta) \; \dd \vartheta \; \dd \xi \\
	=& \lim_{\varepsilon \to 0} \int_{\RR^{2n}} \int_{|\vartheta|>\varepsilon}\big{[}\overline{\mathcal{G}}_{0,s} q\big{]}(\xi, \vartheta)  \big{[} \mathcal{F}\varphi\big{]}  (\xi, \vartheta) \; \dd \vartheta \; \dd \xi. 
\end{align*}
\vspace{1ex}\\
Thus, the proof of Theorem \ref{th:fundamental_solution} follows from the next lemma. 
\begin{lemma}
	With the above notation we have 
	$[ \overline{\mathcal{G}}_{0,s} q](\xi, \vartheta) = (2\pi)^{-(n+s/2)}$. 
\end{lemma}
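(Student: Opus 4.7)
The plan is to differentiate under the integral, reduce the action of $\overline{\mathcal{G}}_{0,s}$ on the integrand of $q$ in \eqref{kernel_q} to an explicit total $t$-derivative, and then evaluate the resulting boundary terms via the fundamental theorem of calculus.

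First I write $a := \tanh(t)/|\vartheta|$ so that the $\xi$-dependence inside the integral is carried by $e^{iaP(\xi)}$. Using $\nabla P(\xi) = 2\tau \xi$ a direct computation yields
$$\mathcal{L}\, e^{iaP(\xi)} = \bigl(4ian - 4a^{2} P(\xi)\bigr)\, e^{iaP(\xi)}.$$
For the transport term, Lemma \ref{Lemma_properties_matrices_A_B_C_D} tells me that $\tau \rho(\vartheta) = -\rho(\vartheta)^{T}\tau$ is skew-symmetric, hence $\xi^{T}\rho(\vartheta)^{T}\tau\xi = 0$ and
$$-i\,\xi^{T}\rho(\vartheta)^{T}\nabla_{\xi}\, e^{iaP(\xi)} = 2a\,\xi^{T}\rho(\vartheta)^{T}\tau\xi\cdot e^{iaP(\xi)} = 0.$$
Combining everything and using $a|\vartheta| = \tanh t$ (so that $a^{2}|\vartheta|^{2} = \tanh^{2} t$ kills the $P(\xi)$-dependence up to a $\cosh^{-2} t$ factor) gives
$$\overline{\mathcal{G}}_{0,s}\, e^{iaP(\xi)} = \Bigl[-\frac{P(\xi)}{\cosh^{2} t} - in|\vartheta|\tanh t\Bigr]\, e^{iaP(\xi)}.$$

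The key observation now is that this is a total $t$-derivative of the prefactor $(\cosh^{n} t)^{-1} e^{iaP}$. From $\frac{d}{dt} e^{iaP(\xi)} = iP(\xi)/(|\vartheta|\cosh^{2}t)\cdot e^{iaP(\xi)}$ and $\frac{d}{dt}\cosh^{-n}t = -n\tanh t/\cosh^{n} t$, a direct check shows
$$\frac{1}{|\vartheta|\cosh^{n}t}\,\overline{\mathcal{G}}_{0,s}\,e^{iaP(\xi)} \;=\; i\,\frac{d}{dt}\Bigl[\frac{e^{iaP(\xi)}}{\cosh^{n}t}\Bigr].$$
Integrating over $t \in (0,\infty)$ against the prefactor $i/(2\pi)^{n+s/2}$ and noting that $|e^{iaP}| = 1$ and $\cosh^{-n}t \to 0$ as $t \to \infty$, while $\cosh 0 = 1$ and $\tanh 0 = 0$, the fundamental theorem of calculus yields
$$\overline{\mathcal{G}}_{0,s} q = \frac{i}{(2\pi)^{n+s/2}}\cdot i \Bigl[\frac{e^{iaP(\xi)}}{\cosh^{n}t}\Bigr]_{t=0}^{t=\infty} = \frac{1}{(2\pi)^{n+s/2}}.$$

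The only subtle point is justifying the differentiation under the integral and the vanishing of the boundary term at $t = \infty$; both are immediate from the exponential decay of $\cosh^{-n}t$ (for $n \geq 1$) combined with the boundedness of $e^{iaP(\xi)}$, so Lebesgue dominated convergence applies on compact subsets of $\{\vartheta \neq 0\}$. The main conceptual step is the algebraic recognition of the integrand as a perfect derivative, which in turn relies on two cancellations: the vanishing of the skew-symmetric transport term, and the exact compensation between the $|\vartheta|^{2}$ coefficient in $\overline{\mathcal{G}}_{0,s}$ and the $|\vartheta|^{-2}$ hidden in $a^{2}$.
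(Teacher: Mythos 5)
Your proposal is correct and follows essentially the same route as the paper: both proofs differentiate under the integral in \eqref{kernel_q}, discard the first-order transport term $i\xi^T\rho(\vartheta)^T\nabla_\xi$ by skew-symmetry of $\tau\rho(\vartheta)$, and recognize the surviving integrand as the exact $t$-derivative of $\cosh^{-n}t\,\exp\{i\tanh t\, P(\xi)/|\vartheta|\}$, evaluated by the fundamental theorem of calculus. The only cosmetic difference is that you apply $\overline{\mathcal{G}}_{0,s}$ directly to the factor $e^{iaP(\xi)}$ inside the integral, whereas the paper first rewrites $q = a(P(\xi),\vartheta)$ and reduces $\overline{\mathcal{G}}_{0,s}$ to an ODE in the single variable $v=P(\xi)$ (a device it re-uses in the following section to parametrize all fundamental solutions via Bessel functions); this changes the bookkeeping but not the proof.
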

\begin{proof}
	We note that $q$ is of the form 
	$ q(\xi, \vartheta) = a \big{(}P(\xi), \vartheta \big{)}$ where $a = a(v,\vartheta)\colon \RR \times \RR^{s}  \to \CC$.
Then we obtain by a straightforward calculation: 
	\begin{equation*}\label{eq:differential_equation_L}
		\big{[}\mathcal L q\big{]} (\xi,\vartheta) = 4 P(\xi) \partial_{v}^2 a \big{(}P(\xi), \vartheta\big{)} + 4n \partial_{v}a\big{(}P(\xi), \vartheta \big{)} 
	\end{equation*}
Moreover, one has: 
\begin{equation*}	
		\xi^T \rho(\vartheta)^T \big{(}\nabla_\xi q\big{)}(\xi, \vartheta) = 2\big{\langle} \tau \rho(\vartheta) \xi,  \xi \big{\rangle} \partial_{v} a\big{(}P(\xi), \vartheta\big{)} = 0,
\end{equation*}
as $\tau \rho(\vartheta)=-2 \tau \Omega(\vartheta) \tau $ is skew-adjoint. Thus, we have 
	\begin{align*}
		\big{[}{\overline{\mathcal{G}}_{0,s}}q\big{]}(\xi,\vartheta)  &=  - P(\xi) a\big{(}P(\xi), \vartheta\big{)}  - n |\vartheta|^2  \partial_v a\big{(}P(\xi),\vartheta\big{)} -
		 |\vartheta|^2  P(\xi) \partial_v^2 a\big{(}P(\xi), \vartheta\big{)} . 
	\end{align*}
Using the expression of the function $a(v,\vartheta)$ gives: 
	\begin{align*}
		\big{[}\overline{\mathcal{G}}_{0,s}q\big{]}(\xi,\vartheta)  &= \frac{i}{(2\pi)^{n+\frac{s}{2}}} \int_0^{\infty}
		\left\{ - i n \tanh t  +  P(\xi) \frac{\tanh^2 t -1}{|\vartheta|}\right\} 
			\times \\
		&\hspace{30ex}  \times \frac{1}{\cosh^n t}\exp \biggl\{ i \frac{\tanh t}{|\vartheta|}  P(\xi) \biggr\} \;\dd t \\
		&= \frac{-1}{(2\pi)^{n+\frac{s}{2}}} \int_0^{\infty} \frac{\dd}{\dd t} \left[ \frac{1}{\cosh^n t }\exp \biggl\{ i \frac{\tanh  t }{|\vartheta|}  P(\xi) \biggr\} \right] \;\dd t \\
		&= \frac{1}{(2\pi)^{n+\frac{s}{2}}},  
	\end{align*}
and the assertion follows. 
\end{proof}
\section{A family of fundamental solutions}
\label{Section_non-uniqueness-of-the-fundamental-solution}
Motivated by the previous section we want to discuss now possible  restrictions on further fundamental solutions of the ultra-hyperbolic operator $\Delta_{0,s}$. To this end we define
$A := \{ {(\xi, \vartheta)} \in \RR^{2n+s} : P(\xi) \neq 0 \; \mbox{\it and} \;  |\vartheta| \neq 0 \}$  and we assume that  $\tilde K_{0,s}$ is a fundamental solution of $\Delta_{0,s}$, which satisfies
\begin{align*}
    \widetilde  K_{0,s}\big{(}\varphi\big{)}
	&=\int_{A}\tilde q(\xi, \vartheta) \big{[} \mathcal{F}\varphi\big{]} (\xi, \vartheta) \; \dd \xi \; \dd \vartheta 
\end{align*}
for all functions $\varphi \in \mathcal S(\RR^{2n+s})$ such that $\mathrm{supp}(\mathcal{F}\varphi) \subseteq A$. We additionally assume that $\tilde q \in C^\infty( A)$ and 
$$ \tilde q(\xi, \vartheta) = a (P(\xi), \vartheta) , \quad \text{\it for some }\quad  a =  a(v, \vartheta) \in C^\infty( (\RR \setminus \{0\}) \times (\RR^s \setminus \{0\})) .  $$
As $\widetilde K_{0,s}$ was assumed to be  a fundamental solution, we have $\overline{\mathcal{G}}_{0,s} \tilde q = (2\pi)^{-(n +\frac{s}2)}$ on $A$, where 
$\overline{\mathcal{G}}_{0,s}$ is given as above. As before this implies  
\begin{align*}
    (2\pi)^{-\frac{2n+s}2} 	&= - P(\xi) a\big{(} P(\xi),\vartheta\big{)}  - n |\vartheta|^2  \partial_v a\big{(}P(\xi),\vartheta \big{)} -
		 |\vartheta|^2  P(\xi) \partial_v^2 a\big{(}P(\xi),\vartheta \big{)}, 
\end{align*}
and thus, $a(\cdot, \vartheta)$ is  a solution of the differential equation of second-order 
\begin{align*}
	\gamma &= - v f(v)  -  n \alpha  f'(v) -  \alpha v  f''(v)  ,
\end{align*}
where $\gamma :=  (2\pi)^{-\frac{2n+s}2}$ and $\alpha := |\vartheta|^2$. The general solution may be calculated explicitly. To this end we use  the ansatz $f(v) = g( v /\sqrt{\alpha})$, which leads to  
$$
\frac{\gamma}{\sqrt{\alpha}}  = - v  g( v)  - n g'( v)  - v  g''( v). 
$$
Thus, for $g (v) = v^{\frac{1-n}2} h (v)$ we obtain 
\begin{align*}
	\frac{\gamma}{\sqrt{\alpha}}  
	&=  \left\{ - v^{\frac{1-n}2 +1}  - \left( \frac{1-n}2\right) \left( n - \frac{n+1}2  \right)  v^{\frac{1-n}2-1}  \right\} h (v) 
	-  v^{\frac{1-n}2}  h' (v)  - v^{\frac{1-n}2+1} h'' (v) ,
\end{align*}
and finally we have 
\begin{align*}
	- \gamma \: \frac{v^{\frac{n-1}2+1}}{\sqrt{\alpha}}  &=  {\left\{ v^2-\left(\frac{n-1}2\right)^2\right\}} h(v)  + v   h'  (v)  + v^2  h'' (v) . 
\end{align*}
The general solution of this differential equation is well-known and may be written as 
$$ h(v) =   \frac{i \gamma \: 2^{\frac{n-1}2 -1} \: \sqrt{\pi} \: \Gamma \left( \frac{n}2 \right) }{\sqrt{\alpha}}  \Bigl \{ c_1 J_{\frac{n-1}2}(v) + c_2 Y_{\frac{n-1}2} (v) + i  \mathbf H_{\frac{n-1}2}(v) \Bigr\} , $$
for coefficients $c_1, c_2 \in \CC$. Here $J_{\frac{n-1}2}$ and $Y_{\frac{n-1}2}$ are the {\it Bessel functions of the first and second kind} and $\mathbf H_{\frac{n-1}2}$ is the 
{{\it Struve function}, which is defined via the integral representation below. Recall that ${\bf H}_{\frac{n-1}{2}}$ solves the inhomogeneous Bessel equation
\begin{equation*}
\frac{4\left(\frac{v}{2}\right)^{\frac{n+1}{2}}}{\sqrt{\pi} \Gamma\big{(}\frac{n}{2} \big{)}}=\left\{ v^2- \left( \frac{n-1}{2} \right)^2 \right\} h(v)+v h^{\prime}(v)+v^2 h^{\prime \prime}(v), 
\end{equation*}
whereas $J_{\frac{n-1}2}$ and $Y_{\frac{n-1}2}$ solve the corresponding homogenous Bessel equation.} This implies
\begin{align*}
	\tilde q(\xi, \vartheta) &=  \frac{i \; \sqrt{\pi} \; \Gamma \left( \frac{n}2 \right)}{2(2\pi)^{n+s/2} |\vartheta|}  
	\left(\frac{2|\vartheta|}{P(\xi)} \right)^{\frac{n-1}2} \biggr\{ c_1 (\vartheta) J_{\frac{n-1}2} \left( \frac{ P(\xi)}{|\vartheta|} \right) + c_2 (\vartheta) Y_{\frac{n-1}2} \left( \frac{P(\xi)}{|\vartheta|} \right) \\
	&\hphantom{= \frac{i \cdot \sqrt{\pi} \; \Gamma \left( \frac{n}2 \right)   }{2(2\pi)^{n+s/2} |\vartheta|}  
	\left(\frac{2|\vartheta|}{P(\xi)} \right)^{\frac{n-1}2} \biggr\{} + i \mathbf H_{\frac{n-1}2}\left( \frac{P(\xi)}{|\vartheta|} \right) \biggr\}
\end{align*}
for measurable functions $c_1, c_2 : \RR^{s} \to \CC$. 

To recover the fundamental solution  from the previous section we use for $\nu \ge \frac12$ the well-known integral representation for the Bessel function and the Struve function 
(cf. formula 12.1.6 of Chapter 12 in \cite{AbSt}): 
\begin{align*}
	J_\nu (v) &= \frac{2 \left(\frac{v}2\right)^\nu }{\sqrt{\pi} \Gamma\left( \nu + \frac12 \right) } \int_{0}^1 ( 1 - \rho^2)^{\nu -\frac{1}2} \cos(v\rho) \; \dd \rho ,\\
	{\mathbf H}_{\nu}(v) &= \frac{2 \left(\frac{v}2\right)^\nu }{\sqrt{\pi}\Gamma\left( \nu + \frac12 \right)} \int_{0}^1 ( 1 - \rho^2)^{\nu -\frac{1}2} \sin (v\rho) \; \dd \rho .
\end{align*}
The transformation $\rho = \tanh  t$ gives us $\frac{\dd \rho}{\dd t } =  1- \rho^2$, $\cosh^2 t = \frac{1}{1-\rho^2}$, and thus, we obtain 
\begin{align*}
	 J_{\frac{n-1}2} (v) +  i {\mathbf H}_{\frac{n-1}2} (v) &= \frac{2 \left(\frac{v}2\right)^{\frac{n-1}2} }{\sqrt{\pi}  \Gamma\left( \frac{n}2 \right)  } 
		\int_0^\infty \frac{\exp\{ { iv  \tanh t}  \} }{\cosh^n t} \; \dd t . 
\end{align*}
Thus, the case $c_1 \equiv 1$ and $c_2 \equiv 0$ will give the fundamental solution $q=\tilde{q}$ in (\ref{kernel_q}) which was obtained in the previous section. 

Conversely, we may provide functions $c_1 , c_2 : \RR^s \to \CC$ and try to obtain further fundamental solutions. To this end we assume that $c_1$ is (for the sake of simplicity) bounded and that $c_2 \equiv 0$.  Define   $\lambda := \frac{c_1 + 1}2$, $\mu :=  \frac{1 - c_1 }2$ and let 
\begin{align}\label{Fundamental_solution_general_form_lambda_mu}
	q^{\lambda, \mu}_{0,s}(\xi, \vartheta) &:= \frac{i \; \sqrt{\pi} \; \Gamma \left( \frac{n}2 \right)}{2(2\pi)^{n+s/2} |\vartheta|}  
	\left(\frac{2|\vartheta|}{P(\xi)} \right)^{\frac{n-1}2} \biggr\{ c_1(\vartheta) J_{\frac{n-1}2} \left( \frac{ P(\xi)}{|\vartheta|} \right) + i \mathbf H_{\frac{n-1}2}\left( \frac{P(\xi)}{|\vartheta|} \right) \biggr\} \\
	&= \frac{i}{(2\pi)^{n+s/2} |\vartheta|}  \int_{0}^1 ( 1 - \rho^2)^{{\frac{n-2}{2}}} \Bigl\{ \lambda(\vartheta) e^{i {\frac{P(\xi)}{|\vartheta|} }\rho} -  
	\mu(\vartheta) e^{-i {\frac{P(\xi)}{|\vartheta|}}\rho} \Bigr\} \; \dd \rho .\notag
\end{align}
Then 
\begin{align}\label{eq:def_K^pm} 
    K^{\lambda, \mu}_{0,s}(\varphi) := \int_{\mathbb{R}^{2n+s}} q^{\lambda, \mu}_{0,s}(\xi, \vartheta) \big{[} \mathcal{F} \varphi\big{]}(\xi, \vartheta) \; \dd(\xi, \vartheta) 
\end{align}
is well-defined for arbitrary $\varphi \in \mathcal S(\RR^{2n+s})$ and  it gives a tempered distribution $K^{\lambda, \mu}_{0,s} \in \mathcal S'(\RR^{2n+s})$. As in the previous chapter we obtain:
\begin{theorem}\label{theorem_family_of_fundamental_solutions}
    For any pair of bounded functions $\lambda, \mu : \RR^s \to \CC$ which satisfies $\lambda + \mu \equiv 1$ we have that $K_{0,s}^{\lambda, \mu}$ defined by \eqref{eq:def_K^pm} is a fundamental solution of $\Delta_{0,s}$. 
\end{theorem}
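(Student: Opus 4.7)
The plan is to mimic the proof of Theorem \ref{th:fundamental_solution}. First, the second (integral) line of \eqref{Fundamental_solution_general_form_lambda_mu} together with the boundedness of $\lambda,\mu$ yields the pointwise bound $|q^{\lambda,\mu}_{0,s}(\xi,\vartheta)|\le C(\lambda,\mu,n)\,|\vartheta|^{-1}$ uniformly in $\xi$. Since $|\vartheta|^{-1}$ is locally integrable on $\mathbb{R}^s$ for $s>1$ and $\mathcal{F}\varphi\in\mathcal{S}(\mathbb{R}^{2n+s})$ decays rapidly, the pairing in the definition of $K^{\lambda,\mu}_{0,s}$ converges absolutely, so $K^{\lambda,\mu}_{0,s}\in\mathcal{S}'(\mathbb{R}^{2n+s})$. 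Moreover $q^{\lambda,\mu}_{0,s}$ is smooth on $\mathbb{R}^{2n}\times(\mathbb{R}^s\setminus\{0\})$, so via the intertwining identity (\ref{eq:definition_operator_G_r_s}) together with integration by parts in $\xi$ for each fixed $\vartheta\neq 0$ (as carried out in Section \ref{section_3}), it suffices to verify the pointwise identity
\begin{equation*}
\overline{\mathcal{G}}_{0,s}\,q^{\lambda,\mu}_{0,s}(\xi,\vartheta)=(2\pi)^{-(n+s/2)}\qquad\mbox{on }\{\vartheta\neq 0\}.
\end{equation*}

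For this identity I would exploit that $q^{\lambda,\mu}_{0,s}(\xi,\vartheta)=a(P(\xi),\vartheta)$ depends on $\xi$ only through $P(\xi)$. Then, exactly as in the lemma following Theorem \ref{th:fundamental_solution}, $\xi^T\rho(\vartheta)^T\nabla_\xi q^{\lambda,\mu}_{0,s}=0$ because $\tau\rho(\vartheta)$ is skew-symmetric, and $\overline{\mathcal{G}}_{0,s}q^{\lambda,\mu}_{0,s}$ reduces to $-P(\xi)a-n|\vartheta|^2\partial_v a-|\vartheta|^2 P(\xi)\partial_v^2 a$. Differentiating the integrand in the second line of \eqref{Fundamental_solution_general_form_lambda_mu} twice under the integral sign and collecting the coefficients of $e^{\pm iP(\xi)\rho/|\vartheta|}$, I expect to recognise the resulting expression as the total $\rho$-derivative
\begin{equation*}
i|\vartheta|\,\partial_\rho\!\left\{(1-\rho^2)^{n/2}\bigl[\lambda(\vartheta)\,e^{iP(\xi)\rho/|\vartheta|}+\mu(\vartheta)\,e^{-iP(\xi)\rho/|\vartheta|}\bigr]\right\}.
\end{equation*}
The boundary contribution at $\rho=1$ vanishes because of $(1-\rho^2)^{n/2}$, while at $\rho=0$ both exponentials equal $1$ and the hypothesis $\lambda+\mu\equiv 1$ makes the boundary term equal to $-1$. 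Multiplying by the prefactor $i/[(2\pi)^{n+s/2}|\vartheta|]$ produces exactly $(2\pi)^{-(n+s/2)}$, and Fourier inversion then converts $\int\overline{\mathcal{G}}_{0,s}q^{\lambda,\mu}_{0,s}\cdot\mathcal{F}\varphi\,d(\xi,\vartheta)$ into $\varphi(0)$.

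The main obstacle I foresee is the sign bookkeeping between the $e^+$ and $e^-$ branches: the coefficients picked up by $-Pa$, $-n|\vartheta|^2\partial_v a$ and $-|\vartheta|^2 P\partial_v^2 a$ differ between the two branches only through the sign of the $\partial_v$-factor $\pm i\rho/|\vartheta|$, and one has to check that they reassemble into the \emph{same} total $\rho$-derivative with coefficients $\lambda(\vartheta)$ and $\mu(\vartheta)$ respectively. Once this is settled, the role of the normalisation $\lambda+\mu\equiv 1$ becomes transparent: any other normalisation would replace the constant $(2\pi)^{-(n+s/2)}$ by $(\lambda+\mu)(\vartheta)(2\pi)^{-(n+s/2)}$, turning $\overline{\mathcal{G}}_{0,s}q^{\lambda,\mu}_{0,s}$ into a non-constant multiplier on the Fourier side and destroying the identification with $\delta_0$.
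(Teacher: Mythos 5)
Your proposal is correct and is exactly the route the paper intends (the paper itself only writes ``As in the previous chapter we obtain'' before stating Theorem~\ref{theorem_family_of_fundamental_solutions}, deferring to the proof of Theorem~\ref{th:fundamental_solution}). The sign bookkeeping you flag does work out: with $b(v,\vartheta)=\int_0^1(1-\rho^2)^{(n-2)/2}\{\lambda e^{iv\rho/|\vartheta|}-\mu e^{-iv\rho/|\vartheta|}\}\,\dd\rho$ one finds $-vb-n|\vartheta|^2\partial_v b-|\vartheta|^2 v\,\partial_v^2 b=i|\vartheta|\int_0^1\partial_\rho\bigl[(1-\rho^2)^{n/2}(\lambda e^{iv\rho/|\vartheta|}+\mu e^{-iv\rho/|\vartheta|})\bigr]\dd\rho=-i|\vartheta|(\lambda+\mu)$, so after the prefactor $\frac{i}{(2\pi)^{n+s/2}|\vartheta|}$ one gets $\overline{\mathcal G}_{0,s}q^{\lambda,\mu}_{0,s}=\frac{\lambda+\mu}{(2\pi)^{n+s/2}}$, which is the advertised constant precisely when $\lambda+\mu\equiv1$.
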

The case $c_2 \not \equiv 0$ is more involved since the  Bessel function of the second kind has the following asymptotic behavior
\begin{align*}
    \left(\frac{v}2\right)^{- \frac{n-1}2}  Y_{\frac{n-1}2} (v)  = - \frac{\Gamma\left(\frac{n-1}2\right)}{\pi} \left(\frac{v}2\right)^{-(n-1)} + \mathcal O(v^{-(n-1)+1})  , \qquad v \to 0, \; n > 1 . 
\end{align*}
\vspace{1ex}\\
Thus, a priori it is not clear how to define $\widetilde K_{0,s}(\varphi) $ for arbitrary $\varphi \in \mathcal S(\RR^{2n+s})$, and once it has been defined this  distribution not necessarily gives 
a fundamental solution. 
\begin{example} {We consider the case $c_1 \equiv 0$, $c_2 \equiv - i$, $n=2$ and we use the formula  
\begin{align}
	  {{\bf H}_{\nu}(v)} - Y_{\nu} (v) = \frac{2 \left(\frac{v}2\right)^\nu}{\Gamma\left(\nu + \frac{1}2 \right) \Gamma\left(\frac{1}2 \right)}
	  \int_0^{\infty} e^{-v \rho} (1+\rho^2)^{\nu - \frac12} \; \dd \rho , \label{GL_relation_difference_H_Y}
\end{align}
{(cf. formula 12.1.8, Chapter 12 of \cite{AbSt})}.  Note that $\nu=\frac{n-1}{2}=\frac{1}{2}$ is a half-integer and therefore $Y_{\frac{1}{2}}(v)$ is not defined for negative values of $z$. 
In particular, the function (\ref{GL_relation_difference_H_Y}) has a pole at $z=0$. In the case where $P(\xi)>0$, we have $v=\frac{P(\xi)}{|\vartheta|}>0$ and using (\ref{GL_relation_difference_H_Y}) 
in the expression of $\tilde{q}(\xi, \vartheta)$ above gives: 

\begin{align*}
    \tilde q(\xi, \vartheta) = \frac{- 1}{(2\pi)^{2+s/2}} \frac1{P(\xi)} , \qquad (\xi , \vartheta) \in \RR^4 \times \RR^s, \qquad P(\xi)>0.
\end{align*}
A possible candidate for a fundamental solution of $\tilde K_{0,s}$ would be 
\begin{equation}\label{candidate_for_a_fundamental_solution_n=2}
            \widetilde K_{0,s}(\varphi)  = \frac{-1}{(2\pi)^{2+s/2}}  \int_{\RR^{s}}  \frac{1}{P} \Bigl[ \mathcal{F}\varphi  (\cdot, \vartheta) \Bigr]  \; \dd \vartheta  , 
\end{equation}
where $\frac{1}{P}$ is interpreted as a distribution on $\mathbb{R}^4$. 
}
\end{example}
{
We finally discuss a possible choice of the distribution that appears in the last example. According to \cite{Gelfand_Shilov} we can interpret $\frac{1}{P(\xi)}$ acting on suitable test functions $\varphi$ 
on $\mathbb{R}^{2n}$ in different ways. With $\lambda \in \mathbb{C}$ such that $\textup{Re}(\lambda)>0$ and using the notation of  \cite{Gelfand_Shilov} we consider 
\begin{equation}
\label{Definition_P_+_-Gelfand}
\big{(} P_{\pm}^{\lambda}, \varphi \big{)} := \int_{\pm P(\xi)>0} (\pm P)^{\lambda} \varphi \; \dd \xi \hspace{3ex} \mbox{\it and} \hspace{3ex} (P\pm i0)^{\lambda} :
=P_+^{\lambda}+e^{\pm \pi \lambda i} P_-^{\lambda}. 
\end{equation}
The following results can be found in Chapter 12 of \cite{Gelfand_Shilov}: 
\begin{proposition}\label{Propoosition_meromorphic_extension_distribution}
The maps $\lambda \mapsto ((P\pm i0)^{\lambda}, \varphi)$ admit a meromorphic extension to the complex plane with only simple poles at most at $-n,-n-1,-n-2, \ldots$. 
\end{proposition}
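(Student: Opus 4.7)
The approach is the classical Riesz--Gelfand analytic continuation, with the ultra-hyperbolic operator $\mathcal{L}$ from \eqref{Definition_classical_UHO_introduction} playing the role of a Bernstein--Sato operator. First I would observe that for $\operatorname{Re}(\lambda) > 0$ the integrals in \eqref{Definition_P_+_-Gelfand} converge absolutely for every $\varphi \in \mathcal{S}(\RR^{2n})$, since $|P|^\lambda$ is locally integrable and the Schwartz decay of $\varphi$ handles infinity. Dominated convergence then makes $\lambda \mapsto ((P \pm i0)^\lambda, \varphi)$ holomorphic on the right half-plane.

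Next, a direct pointwise computation on the open set $\{P \ne 0\}$ yields the Bernstein--Sato--type identity
\[
\mathcal{L}\bigl(P^{\lambda+1}\bigr) = 4(\lambda+1)(\lambda+n)\, P^\lambda.
\]
The point of the $\pm i0$ prescription, with its factor $e^{\pm i\pi(\lambda+1)}$ in front of $P_-^{\lambda+1}$ in \eqref{Definition_P_+_-Gelfand}, is precisely to encode the monodromy correction that lifts this relation to the distributional identity $\mathcal{L}\bigl((P \pm i0)^{\lambda+1}\bigr) = 4(\lambda+1)(\lambda+n)\,(P \pm i0)^\lambda$ on all of $\RR^{2n}$, i.e.\ across the light cone $\{P=0\}$. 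Iterating this and using that $\mathcal{L}$ is formally self-adjoint, integration by parts gives for every $k \in \NN$
\[
\bigl((P \pm i0)^\lambda,\, \varphi\bigr) = \frac{1}{4^k \prod_{j=1}^{k}(\lambda+j)(\lambda+n+j-1)}\, \bigl((P \pm i0)^{\lambda+k},\, \mathcal{L}^k\varphi\bigr).
\]
The right-hand side is holomorphic for $\operatorname{Re}(\lambda) > -k$, so letting $k \to \infty$ produces a meromorphic extension to $\CC$ with at worst simple poles at $\lambda \in \{-1,-2,\ldots\} \cup \{-n,-n-1,\ldots\}$.

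The main point--and the principal obstacle--is to show that the apparent poles at $\lambda = -m$ with $1 \le m \le n-1$ are in fact removable, so that only the poles at $\lambda \in \{-n,-n-1,\ldots\}$ survive. Taking $k = m$ in the identity above, only the factor $(\lambda + m)$ in the denominator vanishes at $\lambda = -m$, since the factors $\lambda + n + j - 1$ for $j=1,\ldots,m$ equal $n - m + j - 1 \ge 1$ (using $m < n$). On the other hand, the numerator at $\lambda = -m$ reduces to
\[
\bigl((P \pm i0)^0,\, \mathcal{L}^m \varphi\bigr) = \int_{\RR^{2n}} \mathcal{L}^m \varphi(\xi)\, d\xi = 0,
\]
because $(P \pm i0)^0 \equiv 1$ as a distribution (the light cone has Lebesgue measure zero and $P_+^0 + P_-^0 = 1$ a.e.) and $\mathcal{L}^m \varphi$ is a sum of pure partial derivatives of a Schwartz function. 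This simple zero in the numerator cancels the simple pole in the denominator, proving removability.

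The principal technical subtlety throughout the plan is the rigorous distributional extension of the functional equation across $\{P=0\}$ with the precise $e^{\pm i\pi\lambda}$ weight; I would handle this by localizing near the light cone, writing $P$ in adapted coordinates that reduce locally to a one-variable factor, and matching the boundary contributions from $\{P>0\}$ and $\{P<0\}$, exactly as in the one-dimensional model $(t \pm i0)^\lambda$ where the factor $e^{\pm i\pi\lambda}$ first arises.
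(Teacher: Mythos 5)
Your proof is essentially correct and follows the classical Riesz--Gelfand route based on the Bernstein--Sato identity $\mathcal{L}(P^{\lambda+1}) = 4(\lambda+1)(\lambda+n)P^{\lambda}$; this is precisely the argument behind the result the paper cites from \cite{Gelfand_Shilov}, and it is also the mechanism the paper reuses in its Appendix (Proposition~\ref{Appendix_proposition_1}), there phrased through the $\varepsilon$-regularized kernels $(P+i\varepsilon)^{\lambda}$ and the identity \eqref{Relation_uh_operator_distribution}. Your recursion and the observation that $\int_{\RR^{2n}}\mathcal{L}^{m}\varphi\,d\xi = 0$ for $m\ge 1$ are exactly the ingredients that make the potential poles at $\lambda=-1,\ldots,-n+1$ disappear.

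One point deserves a refinement to be fully rigorous. With $k=m$, the numerator $\bigl((P\pm i0)^{\lambda+m},\,\mathcal{L}^{m}\varphi\bigr)$ is holomorphic only on the open half-plane $\operatorname{Re}(\lambda)>-m$; at $\lambda=-m$ you are on the boundary, so vanishing of the boundary value does not directly give a ``simple zero'' that can be cancelled against the simple pole of $\Lambda(\lambda,m)$ (holomorphy of the numerator in a neighborhood of $-m$ is needed, which is precisely part of what you are trying to prove). The clean fix is to take $k\ge m+1$: then $\bigl((P\pm i0)^{\lambda+k},\,\mathcal{L}^{k}\varphi\bigr)$ is holomorphic near $\lambda=-m$, the only vanishing factor in $\Lambda(\lambda,k)^{-1}$ is $\lambda+m$ (using $m\le n-1$), and the residue is a nonzero constant times $\bigl((P\pm i0)^{k-m},\,\mathcal{L}^{k}\varphi\bigr)$. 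Since $k-m\ge 1$ is a positive integer, $(P\pm i0)^{k-m}=P^{k-m}$ pointwise, and integrating by parts $k-m$ times (iterating the Bernstein--Sato identity on $P^{k-m}$) reduces this to a nonzero multiple of $\int\mathcal{L}^{m}\varphi=0$. The paper's Appendix avoids this bookkeeping by instead noting that the $\varepsilon$-regularized limit is continuous on the closed half-plane $\{\operatorname{Re}(\lambda)\ge -n+1\}$, which achieves the same removability; the two arguments are equivalent in spirit.
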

We now fix the variable $\vartheta \in \mathbb{R}^s$ and we consider $\Delta_{0,s}$ as an operator with respect to $\xi \in \mathbb{R}^{2n}$. Assuming that the real part of 
$\lambda$ is sufficiently large so that all boundary integrals that appear via partial integration vanish we calculate: 
\begin{align*}
\Big{(}P_{\pm}^{\lambda}, \mathcal{F} \big{[} \Delta_{0,s} \varphi( \cdot , \vartheta) \big{]} \Big{)}
&=\int_{\pm P(\xi) >0}\big{(}\pm P(\xi)\big{)}^{\lambda}  \mathcal{G}_{0,s} \big{[} \mathcal{F} \varphi \big{]}(\xi, \vartheta) \; \dd \xi\\
&=- \int_{\pm P(\xi) >0} \big{(} \pm P(\xi) \big{)}^{\lambda}\left\{ P(\xi) + \frac{|\vartheta|^2}{4}\mathcal{L} \right\} \big{[} \mathcal{F} \varphi \big{]}(\xi, \vartheta) \; \dd \xi.  
\end{align*}
Clearly, the same formula holds true if we replace the distributions $P_{\pm}^{\lambda}$ by $(P\pm i0)^{\lambda}$ above. Now, we assume again that $n=2$ and we rewrite the last 
equation in the form: 
\begin{equation*} 
\Big{(} \big{(}P-i0)^{\lambda}, \mathcal{F} \big{[} \Delta_{0,s} \varphi(\cdot, \vartheta) \big{]} \Big{)}_{|_{\lambda=-1}}=-\int_{\mathbb{R}^4} \big{[} \mathcal{F} \varphi \big{]}(\xi, \vartheta) \; \dd \xi
- \frac{|\vartheta|^2}{4} \Big{(}(P-i0)^{\lambda}, \mathcal{L} \big{[} \mathcal{F} \varphi \big{]}(\cdot, \vartheta) \Big{)}_{|_{\lambda=-1}}. 
\end{equation*}
The following relations are well-known (cf. \cite{Gelfand_Shilov} , p. 258 and formula (4) on p. 277 with $k=0$) and can be applied to the right hand side of the equation: 
\begin{align*}
\Big{(} P_{\pm}^{\lambda-1}, \varphi\Big{)} 
= \frac{\pm1}{4 \lambda(\lambda+1)}  \Big{(} P_{\pm}^{\lambda}, \mathcal{L} \varphi \Big{)} \hspace{2ex} \mbox{\it and} \hspace{2ex} 
\textup{res}_{\lambda=-2}(P+i0)^{\lambda}=- \pi^2 \delta_0,
\end{align*}
where $\delta_0$ means the point evaluation at zero. Therefore we have: 
\begin{equation*}
 \Big{(} (P-i0)^{\lambda}, \mathcal{L} \varphi \Big{)}= 4 \lambda(\lambda+1) \Big{[}\big{(}P_+^{\lambda-1}, \varphi \big{)}+ e^{- \pi i(\lambda-1)} \big{(}P_-^{\lambda-1}, \varphi \big{)} \Big{]}. 
\end{equation*}
Combining these formulas and using Proposition \ref{Propoosition_meromorphic_extension_distribution} gives: 
\begin{equation*}
\Big{(} \big{(} P-i0 \big{)}^{\lambda}, \mathcal{L} \varphi \Big{)}_{|_{\lambda=-1}} =-4 \textup{res}_{\lambda=-2} \Big{(} (P+i0)^{\lambda}, \varphi \Big{)} = (2\pi)^2 \varphi(0). 
\end{equation*}
If we would interpret the action of the distribution $\frac{1}{P}$ in (\ref{candidate_for_a_fundamental_solution_n=2}) as $(P-i0)^{\lambda}_{|_{\lambda=-1}}$, then:
\begin{align*}
\widetilde{K}_{0,s}\big{(} \Delta_{0,s} \varphi \big{)}
&= - \frac{1}{(2\pi)^{2+\frac{s}{2}}} \int_{\mathbb{R}^s} \Big{(} \big{(}P-i0)^{\lambda}, \mathcal{F} \big{[} \Delta_{0,s} \varphi(\cdot, \vartheta) \big{]} \Big{)}_{|_{\lambda=-1}} \; \dd \vartheta\\
&=\frac{1}{(2\pi)^{2+ \frac{s}{2}}} \int_{\mathbb{R}^{4+s}} \big{[} \mathcal{F}\varphi\big{]}(\xi, \vartheta) \; \dd(\xi, \vartheta) + \frac{1}{(2\pi)^{\frac{s}{2}}} \int_{\mathbb{R}^s} \frac{|\vartheta|^2}{4} 
\big{[} \mathcal{F} \varphi \big{]}(0, \vartheta) \; \dd \vartheta\\
&= \varphi(0)+ \frac{1}{4} \big{(} \Delta_{\vartheta} \varphi \big{)}(0). 
\end{align*}
Here we write $\Delta_{\vartheta}$ for the Laplace operator with respect to $\vartheta \in \mathbb{R}^s$. So we have seen that $\widetilde{K}_{s,0}$ fails to be a fundamental solution of 
$\Delta_{0,s}$ but rather solves the equation: 
\begin{equation*}
\Delta_{0,s} \widetilde{K}_{0,s}= \delta_0+ \frac{1}{4} \big{(}\delta_{x=0} \mathcal{F}\big{)} \otimes \big{(}\Delta_{\vartheta} \delta_{z=0}\big{)}. 
\end{equation*}
}
\section{A second form of  fundamental solutions}
\label{Section_A second form of  fundamental solutions}
In the present section we represent the fundamental solutions $K_{0,s}^{\lambda,\mu}$ of the ultra-hyperbolic operator $\Delta_{0,s}$ in Theorem \ref{theorem_family_of_fundamental_solutions} in 
a different form. The formulas which we obtain generalize the expressions derived in \cite{MuellerRicci_2,Tie} in the special case of the Heisenberg Lie algebra, i.e., $s=1$ and for a specific choice of $\lambda$ and $\mu$.  
We will use the notation in Sections \ref{Section_From_Sub-Laplacian_to_UHO}, \ref{section_3} and \ref{Section_non-uniqueness-of-the-fundamental-solution}. For simplicity we first consider 
the fundamental solution $q_{0,s}^{1,0}$ which arises for $(\lambda,\mu)=(1,0)$ and therefore will be denoted by 
$K_{0,s}^{1,0}:=K_{0,s}$. We take $\varphi(x,z) \in \mathcal{S}(\mathbb{R}^{2n+s})$ and interchange the order of integration in (\ref{Definition_K_0_s}):
\begin{align}
\label{eq:K_0_s_+_Section_7}
K_{0,s}(\varphi):
&= \frac{1}{(2\pi)^{n+s/2}}\int_0^{\infty} \int_{\mathbb{R}^{2n+s}} G_0(\xi, \vartheta, t) \big{[} \mathcal{F} \varphi\big{]}(\xi, \vartheta) \; \dd(\xi, \vartheta) \; \dd t \\
&= \frac{1}{(2\pi)^{n+s/2}}\int_0^{\infty} \lim_{\varepsilon \rightarrow 0}\int_{\mathbb{R}^{2n+s}}  G_{\varepsilon}(\xi, \vartheta, t)
 \big{[} \mathcal{F}\varphi\big{]}(\xi, \vartheta) \; \dd(\xi, \vartheta) \; \dd t, \notag
\end{align}
where with $\varepsilon \geq 0$ we define 
\begin{equation*}
G_{\varepsilon}(\xi, \vartheta,t) :=\frac{i}{|\vartheta| \cosh^n t} \exp \left\{ i \frac{\tanh t}{|\vartheta|}P(\xi) - \frac{\varepsilon |\vartheta|}{4}\coth t \right\}. 
\end{equation*}
According to Lemma \ref{Lemma_Fourier_transformLexpontion_quadratic_form} and with the notation in (\ref{W_r_=_0}) we have:
\begin{align}\label{eq:integrarion_G_varepsilon_Fourier_transform}
    &\int_{\mathbb{R}^{2n+s}} G_{\varepsilon}(\xi, \vartheta, t) \big{[} \mathcal{F} \varphi\big{]}(\xi, \vartheta) \; \dd(\xi, \vartheta) =\\
          =&\int_{\mathbb{R}^{2n+s}}\Big{[}\mathcal{F}_{\xi \rightarrow x} G_{\varepsilon} \Big{]} (x, \vartheta, t)
   \Big{[} \mathcal{F}_{z \rightarrow \vartheta} \varphi \Big{]} (x, \vartheta) \; \dd(x, \vartheta)\notag\\
         =&\frac{W(2t)}{(2t)^n} \int_{\mathbb{R}^{2n+s}} |\vartheta|^{n-1}\exp \left\{ -\frac{i}{4} |\vartheta|   \big{[} P(x)-i \varepsilon\big{]} \coth t \right\}
\Big{[} \mathcal{F}_{z \rightarrow \vartheta} \varphi \Big{]} (x, \vartheta) \; \dd(x, \vartheta)=(*). \notag
\end{align}
\par 
The distribution $K_{0,s}^{0,1}$ corresponding to the kernel $q_{0,s}^{0,1}$ in (\ref{Fundamental_solution_general_form_lambda_mu}) can be expressed in the same 
way by replacing $G_{\varepsilon}$ with its complex conjugate $\overline{G_{\varepsilon}}$. More generally, from these we can derive integral expression for the general case 
$q_{0,s}^{\lambda, \mu}$ where $\lambda+\mu\equiv 1: \mathbb{R}^s \rightarrow \mathbb{C}$:  
\begin{multline}\label{Integral_expression_after_FT_K_0_s_lambda_mu}
K_{0,s}^{\lambda, \mu} (\varphi)= \frac{1}{(2\pi)^{n+ \frac{s}{2}}} \int_0^{\infty} \lim_{\varepsilon \rightarrow 0} \int_{\mathbb{R}^{2n+s}} \big{[} \mathcal{F}_{z \rightarrow \vartheta} \varphi \big{]} (x, \vartheta) \times \\
\times \Big{(} \lambda \big{[} \mathcal{F}_{\xi \rightarrow x} G_{\varepsilon} \big{]} + \mu \big{[} \mathcal{F}_{\xi \rightarrow x} \overline{G_{\varepsilon}} \big{]} \Big{)}(x, \vartheta, t)
 \;  (x, \vartheta) \; \dd t. 
\end{multline}
\begin{example}
\label{Heisenberg_group_case_exampele_Mueler_Ricci_Tie}
In case of the Heisenberg group $G_{0,1}$ of dimension $2n+1$ in  Example \ref{Example_Heisenberg_Lie_algebra} and via a specific choice of $\lambda$ and $\mu$ 
we recover from the last formula an expression of a fundamental solution of the ultra-hyperbolic operator $\Delta_{0,1}$ which previously has been presented in the work by D. M\"{u}ller and F. Ricci in 
\cite{MuellerRicci}, (see also \cite{Tie}, p. 1297). 
\vspace{1ex}\par 
Let $s=1$ and choose $\lambda: \mathbb{R} \rightarrow [0,1]$ to be the characteristic function of the non-negative half-line:
\begin{equation*}
\lambda(\vartheta):= 
\begin{cases}
1, & \textup{\it if} \: \: \vartheta \geq 0, \\
0, & \textup{\it if} \: \: \vartheta<0.  
\end{cases}\hspace{4ex} \mbox{\it and put} \hspace{4ex} \mu(\vartheta):= 1- \lambda(\vartheta). 
\end{equation*}
Then for all $\vartheta \in \mathbb{R}\setminus \{0\}$ we have: 
\begin{equation*}
\lambda G_{\varepsilon}+ \mu \overline{G_{\varepsilon}}
= \frac{i}{\vartheta \cosh^nt} \exp \left\{ i \frac{\tanh t}{\vartheta} P(\xi) - \frac{\varepsilon |\vartheta|}{4} \coth t \right\}. 
\end{equation*}
Applying Lemma \ref{Lemma_Fourier_transformLexpontion_quadratic_form} gives: 
\begin{equation*}
\lambda \big{[} \mathcal{F}_{\xi \rightarrow x}G_{\varepsilon}\big{]}+ \mu \big{[} \mathcal{F}_{\xi \rightarrow x} \overline{G_{\varepsilon}}\big{]}
=
\frac{i |\vartheta|^n}{\vartheta \sinh^nt} \exp \left\{ - \frac{i}{4} \vartheta \big{[} P(\xi) -i \varepsilon \textup{sgn}(\vartheta) \big{]} \coth t \right\}. 
\end{equation*}
As usual, $\textup{sgn}(\vartheta)$ denotes the sign-function. Therefore, in this example we find: 
\begin{multline*}
\lim_{\varepsilon \rightarrow 0} \int_{\mathbb{R}^{2n+1}} \big{[}\mathcal{F}_{z \rightarrow \vartheta} \varphi \big{]}(x, \vartheta) 
\Big{(} \lambda \big{[} \mathcal{F}_{\xi \rightarrow x}G_{\varepsilon}\big{]}+ \mu \big{[} \mathcal{F}_{\xi \rightarrow x} \overline{G_{\varepsilon}}\big{]}\Big{)}(x, \vartheta,t) \; \dd (x, \vartheta)=\\
= \frac{i}{2^n \sinh ^n t} \int_{\mathbb{R}^{2n+1}} (\textup{sgn } \vartheta)^n \vartheta^{n-1}\big{[}\mathcal{F}_{z \rightarrow \vartheta} \varphi \big{]}(x, \vartheta) 
\exp \left\{ - \frac{i}{4} \vartheta P(x) \coth t \right\} \; \dd (x, \vartheta)=(+). 
\end{multline*}
If $n$ is even we can calculate the integral on the right more explicitly by the inversion of the Fourier transform: 
\begin{equation*}
(+)= \frac{\sqrt{2 \pi}}{(2i \sinh t)^n} \int_{\mathbb{R}^{2n}}\frac{\partial^{n-1} \varphi}{\partial z^{n-1}} \Big{(}x, - \frac{P(x)}{4} \coth t \Big{)} \; \dd x. 
\end{equation*}
Inserting the last formula into (\ref{Integral_expression_after_FT_K_0_s_lambda_mu}) gives a fundamental solution in form of an iterated integral: 
\begin{equation}\label{Fundamental_solution_Mueller_Ricci}
K_{0,1}^{\lambda, \mu}(\varphi)=\frac{1}{(4 \pi i)^n} \int_0^{\infty} \frac{1}{\sinh^nt} \frac{\partial^{n-1}\varphi}{\partial z^{n-1}} \left(x,- \frac{P(x)}{4} \coth t \right) \; \dd x \; \dd t. 
\end{equation}
Up to a  sign this is the fundamental solution derived in (62) of \cite{Tie}. 
{Since $\coth t >1$ on $(0,\infty)$ it follows that the distribution (\ref{Fundamental_solution_Mueller_Ricci}) vanishes in 
$\big{\{} (x,z)\in \mathbb{R}^{2n+s} \: : \:4 |z| < |P(x)|\big{\}}.$ }
\end{example}
In the case of the center dimension $s>1$ and $n>1$ we may pass to polar coordinates in the $\vartheta$-integration of $(*)$. For the rest of the section we only consider the 
fundamental solution $K_{0,s}^{1,0}$. However, all calculations can be done in a similar way for  the general fundamental solutions $K_{0,s}^{\lambda, \mu}$ in (\ref{Integral_expression_after_FT_K_0_s_lambda_mu}). With the standard surface measure $\sigma$ on the euclidean sphere 
$\mathbb{S}^{s-1}$ in $\mathbb{R}^s$ and $r>0$ we put: 
\begin{equation*}
\widetilde{\varphi}(x,r)= \int_{\mathbb{S}^{s-1}} \Big{[} \mathcal{F}_{z \rightarrow \vartheta} \varphi \Big{]}(x,r\omega) \; \dd \sigma(\omega). 
\end{equation*}
Then we have 
\begin{equation*}
(*)=\frac{W(2t)}{(2t)^n} \int_{\mathbb{R}^{2n}} \int_0^{\infty} r^{n+s-2} \exp \left\{- \frac{ir}{4}  \big{[} P(x)-i \varepsilon\big{]} \coth t\right\} \widetilde{\varphi} (x,r) \; \dd r \; \dd x. 
\end{equation*}
In the $r$-integral we can perform $(n-1)$  partial integrations without producing boundary terms and using: 
\begin{equation*}
e^{ - \frac{ir}{4} [P(x)-i \varepsilon] \coth t}= 
\frac{(4i)^{n-1}\tanh^{n-1}t}{[P(x)-i \varepsilon]^{n-1}}   \frac{\dd^{n-1}}{\dd r^{n-1}}e^{ - \frac{ir}{4}[P(x)-i \varepsilon] \coth t}.  
\end{equation*}
It follows that 
\begin{equation*}
(*)=\frac{2^{n-2}}{i^{n-1}}  \frac{1}{\sinh t \cosh^{n-1} t} \int_{\mathbb{R}^{2n}} \frac{ \varphi_{t, \varepsilon}(x)}{[P(x)-i \varepsilon]^{n-1}} \; \dd x, 
\end{equation*}
where the function $\varphi_{t, \varepsilon}(x)$ is given by: 
\begin{equation}\label{Phi_rho_varepsilion}
\varphi_{t, \varepsilon}(x) = \int_0^{\infty} \frac{\dd^{n-1}}{\dd r^{n-1}} \Big{[} r^{n+s-2} \widetilde{\varphi}(x,r) \Big{]} 
e^{ - \frac{ir}{4} [P(x)-i \varepsilon] \coth t}\; \dd r \in \mathcal{S}(\mathbb{R}^{2n}). 
\end{equation}
Summarizing the calculation we have shown: 
\begin{lemma}\label{Lemma_second_form_fundamental_solution_limit}
Let $\varphi =\varphi(x,z) \in \mathcal{S}(\mathbb{R}^{2n+s})$,  then: 
\begin{equation}\label{limit_varepsilon_delta_zero}
 K^{1,0}_{0,s} \big{(} \varphi \big{)} =\left(\frac{2}{i}\right)^{n-2} \frac{1}{(2\pi)^{n+ \frac{s}{2}}}
 \int_0^{\infty} \frac{1}{\sinh t  \cosh^{n-1} t}  \lim_{\varepsilon \rightarrow 0} 
 \int_{\mathbb{R}^{2n}} \frac{\varphi_{t, \varepsilon}(x)}{[P(x)-i \varepsilon]^{n-1}} \; \dd x \; \dd t. 
\end{equation}
\end{lemma}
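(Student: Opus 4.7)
The plan is to turn the formal chain of manipulations that immediately precedes the lemma into a rigorous computation, treating $\varepsilon > 0$ as a genuine regularization parameter throughout and only keeping the $\varepsilon \to 0$ limit inside the outer $t$-integral at the very end. The starting point is the identity (\ref{eq:K_0_s_+_Section_7}), with the $(\xi,\vartheta)$-integrand $G_\varepsilon(\xi,\vartheta,t)\cdot[\mathcal F \varphi](\xi,\vartheta)$ recast via Plancherel/duality as an integral over $(x,\vartheta)$ of $\mathcal F_{\xi\to x}G_\varepsilon \cdot \mathcal F_{z\to\vartheta}\varphi$. The partial Fourier transform $\mathcal F_{\xi\to x}G_\varepsilon$ is evaluated by applying Lemma \ref{Lemma_Fourier_transformLexpontion_quadratic_form} with the purely imaginary coefficients $\beta_j = \mp 2i\tanh t/|\vartheta|$; because $P$ has signature $(n,n)$, the branch choices for the $2n$ square roots combine into the real positive factor $(|\vartheta|\coth t/2)^n$, and the only surviving imaginary unit is the $i$ carried by the definition of $G_\varepsilon$. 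This produces exactly the expression (\ref{eq:integrarion_G_varepsilon_Fourier_transform}).

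Next I would pass to polar coordinates $\vartheta = r\omega$ with $r>0$, $\omega \in \mathbb S^{s-1}$, which is legitimate because $s>1$, introducing the spherical average $\widetilde\varphi(x,r)$ and the weight $r^{n+s-2}$. The resulting $r$-integrand has the form $r^{n+s-2}\widetilde\varphi(x,r)\exp\{-ir(P(x)-i\varepsilon)\coth t/4\}$, and I would then perform $(n-1)$ integrations by parts in $r$, each based on the identity $e^{-ir\beta/4} = (4i/\beta)\frac{d}{dr}e^{-ir\beta/4}$ with $\beta = (P(x)-i\varepsilon)\coth t$. The crucial technical step is checking that every boundary contribution vanishes: at $r \to \infty$ the regularization produces the genuine decay factor $\exp(-r\varepsilon\coth t/4)$, so $\varepsilon>0$ kills the boundary terms there, and at $r=0$ the function $r^{n+s-2}\widetilde\varphi(x,r)$ has a zero of order $n+s-2$, which is $\geq n-1$ under the hypothesis $s \geq 1$, so the first $n-1$ derivatives also vanish at the origin. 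After the iteration one is left with the prefactor $(-4i\tanh t/(P(x)-i\varepsilon))^{n-1}$ and an integral of $\frac{d^{n-1}}{dr^{n-1}}[r^{n+s-2}\widetilde\varphi(x,r)]e^{-ir(P-i\varepsilon)\coth t/4}$ over $r \in (0,\infty)$, which is precisely $\varphi_{t,\varepsilon}(x)$ from (\ref{Phi_rho_varepsilion}).

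The remaining work is bookkeeping of constants: combining the prefactor $W(2t)/(2t)^n = 1/(2^n\sinh^n t)$ from the explicit Fourier transform, the factor $i$ inherited from $G_\varepsilon$, and the $(-4i\tanh t)^{n-1}$ from the IBP, together with $\tanh^{n-1}t/\sinh^n t = 1/(\sinh t\cosh^{n-1} t)$, yields the coefficient $(2/i)^{n-2}/(\sinh t\cosh^{n-1}t)$ as claimed. The limit $\varepsilon \to 0$ has to be kept inside the outer $t$-integral since both $\varphi_{t,\varepsilon}$ and $(P(x)-i\varepsilon)^{1-n}$ depend genuinely on $\varepsilon$.

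I expect the main obstacle to be the Fubini step needed to justify interchanging the outer $t$-integral with the inner $\lim_{\varepsilon\to 0}$ and the $(\xi,\vartheta)$- or $(x,r,\omega)$-integral. The integrand must be controlled uniformly in $\varepsilon$ near $t = 0$, where $\coth t$ blows up and the oscillation is very rapid, and near $t = \infty$, where $\sinh t\cosh^{n-1}t$ grows exponentially; fortunately, the exponential decay from $\varphi \in \mathcal S(\mathbb R^{2n+s})$ in the $\vartheta$-variable yields rapid decay of $\widetilde\varphi(x,r)$ in $r$, and the $n-1$ derivatives falling on $r^{n+s-2}\widetilde\varphi(x,r)$ give it enough vanishing at $r=0$ to keep $\varphi_{t,\varepsilon}(x)$ bounded uniformly in $\varepsilon$ and locally uniformly in $t$. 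This, combined with standard estimates on the oscillatory factor $(P(x)-i\varepsilon)^{1-n}$ viewed as a tempered distribution on $\mathbb R^{2n}$, should suffice for a dominated-convergence argument.
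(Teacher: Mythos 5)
Your proposal follows the paper's own derivation essentially verbatim: starting from (\ref{eq:K_0_s_+_Section_7}), applying Lemma \ref{Lemma_Fourier_transformLexpontion_quadratic_form} to pass to the $(x,\vartheta)$-integral as in (\ref{eq:integrarion_G_varepsilon_Fourier_transform}), switching to polar coordinates in $\vartheta$, and performing $(n-1)$ integrations by parts in $r$ with the same identity $e^{-ir\beta/4}=(4i/\beta)\frac{d}{dr}e^{-ir\beta/4}$. The extra care you take with the vanishing of boundary terms (using the $\varepsilon$-damping at $r\to\infty$ and the zero of order $n+s-2\geq n-1$ at $r=0$) and with the dominated-convergence issues is sound and fills in details the paper leaves implicit.
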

In the next step we wish to calculate the limit in the $t$-integrand of  (\ref{limit_varepsilon_delta_zero}). We fix a Schwartz function $\psi \in \mathcal{S}(\mathbb{R}^{2n})$ and 
study the existence of the limit 
\begin{equation*}
D(\psi):=\lim_{\varepsilon \rightarrow 0} \int_{\mathbb{R}^{2n}} \frac{\psi(x)}{[P(x)-i \varepsilon]^{n-1}} \; \dd x
=i^{n-1}\lim_{\varepsilon \rightarrow 0} \int_{\mathbb{R}^{2n}} \frac{\psi(x)}{[iP(x)+\varepsilon]^{n-1}} \; \dd x. 
\end{equation*}
Let $\beta \in \mathbb{C}$ with $\textup{Re}(\beta)>0$ and recall the well-known integral representation of the Gamma function: 
\begin{equation}\label{Gamma_function_integral_expression}
\frac{\Gamma(s)}{\beta^s} = \int_0^{\infty} t^{s-1} e^{- \beta t } \; \dd t. 
\end{equation}
It follows that 
\begin{equation*}\label{Integral_P(x)-ivarepsilon_1}
D(\psi) = \frac{i^{n-1}}{\Gamma(n-1)}\lim_{\varepsilon \rightarrow 0}  \int_0^{\infty} t^{n-2} e^{-\varepsilon t} \int_{\mathbb{R}^{2n}} \psi(x) e^{-iP(x)t} \; \dd x \; \dd t. 
\end{equation*}
\par 
We decompose the outer integral into the integrals $\int_0^1$ and $\int_1^{\infty}$. The first  $\varepsilon$-dependent family of integrals will be denoted $I^{\varepsilon}_{1}(\psi)$ and the second by 
$I^{\varepsilon}_{2}(\psi)$. Clearly, $I^{\varepsilon}_{1}(\psi)$ converges as $\varepsilon \rightarrow 0$ to: 
\begin{equation*}
I_{1}(\psi):=\frac{i^{n-1}}{\Gamma(n-1)} \int_0^1 t^{n-2} \int_{\mathbb{R}^{2n}} \psi(x) e^{-iP(x)t} \; \dd x\;  \dd t. 
\end{equation*}
Moreover,  independently of $\varepsilon >0$ and with the $L^1$-norm on $L^1(\mathbb{R}^{2n})$ we have the estimate: 
\begin{equation*}
\label{Estimate_I_1_k_Psi}
\big{|} I^{\varepsilon}_{1}(\psi)\big{|} \leq \frac{1}{(n-1)!}  \| \psi \|_{L^1}.
\end{equation*}
In the next step we consider the integrals over $[1, \infty)$ and we perform a Fourier transform in the $x$-integral. Using Lemma \ref{Lemma_Fourier_transformLexpontion_quadratic_form} we find: 
\begin{equation*}
 \mathcal{F} \Big{[} e^{-iP(x)t} \Big{]}(\xi)= (2t)^{-n} e^{\frac{i}{4t} P(\xi) }.
\end{equation*}
Therefore: 
\begin{equation}\label{Chapter_7_Fourier_transform_integral}
\int_{\mathbb{R}^{2n}}\psi(x) e^{-iP(x) t} dx
=\frac{1}{(2t)^n} \int_{\mathbb{R}^{2n}} \big{[} \mathcal{F}^{-1} \psi \big{]}(\xi)  e^{\frac{i}{4t} P(\xi)} \; \dd \xi. 
\end{equation}
According to the last expression we find: 
\begin{align*}
I^{\varepsilon}_{2}(\psi)
=& \frac{i^{n-1}}{2^n\Gamma(n-1)} \int_1^{\infty} \frac{e^{-\varepsilon t} }{t^2} \int_{\mathbb{R}^{2n}} \big{[}\mathcal{F}^{-1} \psi \big{]}(\xi) e^{\frac{i}{4t} P(\xi)} \; \dd \xi \; \dd t. 
\end{align*}
The iterated integrals exist in the case $\varepsilon =0$ and by the dominated convergence theorem we have
\begin{equation*}
\lim_{\varepsilon \rightarrow 0}I^{\varepsilon}_{2}(\psi)=\frac{i^{n-1}}{2^n\Gamma(n-1)} \int_1^{\infty} \frac{1}{t^2} \int_{\mathbb{R}^{2n}} \big{[}\mathcal{F}^{-1} \psi \big{]}(\xi) 
 e^{\frac{i}{4t} P(\xi)} \; \dd \xi \; \dd t. 
\end{equation*}
Moreover,  independent of $\varepsilon >0$ one obtains the estimate:
\begin{equation*}
\big{|} I_{2}^{\varepsilon}(\psi) \big{|} \leq \frac{1}{2^n\Gamma(n-1)} \big{\|} \mathcal{F}^{-1} \psi \big{\|}_{L^1}. 
\end{equation*}
We summarize the above observations: 
\begin{proposition}\label{Proposition_Distribution_P(x)-ivarepsilon}
Let $\psi \in \mathcal{S}(\mathbb{R}^{2n})$. Then the limit 
\begin{equation*}
\lim_{\varepsilon \rightarrow 0} \int_{\mathbb{R}^{2n}} \frac{\psi(x)}{[P(x)-i \varepsilon]^{n-1}} \; \dd x =: \frac{1}{P^{n-1}} \big{[} \psi \big{]} 
\end{equation*}
exists  and there is a constant $C>0$ independent of $\psi$ and $\varepsilon$ such that: 
\begin{equation}\label{GL_inequality_distribution_P_Fourier_transform}
\left| \int_{\mathbb{R}^{2n}} \frac{\psi(x)}{[P(x)-i \varepsilon]^{n-1}} \; \dd x \right| \leq C\Big{(} \big{\|} \psi\|_{L^1}+ \big{\|} \mathcal{F}^{-1}\psi \big{\|}_{L^1} \Big{)}. 
\end{equation}
\end{proposition}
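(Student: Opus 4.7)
The plan is to directly consolidate the estimates already carried out in the paragraphs preceding the statement. First I would apply Fubini (legitimate for $\varepsilon>0$, where everything is absolutely integrable) together with the Gamma function identity \eqref{Gamma_function_integral_expression} to write
\begin{equation*}
\int_{\mathbb{R}^{2n}} \frac{\psi(x)}{[P(x)-i\varepsilon]^{n-1}}\,\dd x = \frac{i^{n-1}}{\Gamma(n-1)} \int_0^{\infty} t^{n-2} e^{-\varepsilon t}\left( \int_{\mathbb{R}^{2n}} \psi(x)\, e^{-iP(x)t}\,\dd x \right)\dd t,
\end{equation*}
and split the $t$-integral at $t=1$ into the pieces $I_1^{\varepsilon}(\psi)$ and $I_2^{\varepsilon}(\psi)$ introduced in the text.

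For $I_1^{\varepsilon}(\psi)$ the inner $x$-integral is uniformly bounded by $\|\psi\|_{L^1}$ and $t^{n-2}e^{-\varepsilon t}\leq t^{n-2}$ is integrable on $(0,1)$, so dominated convergence yields both the existence of $\lim_{\varepsilon\downarrow 0} I_1^{\varepsilon}(\psi)=I_1(\psi)$ and the uniform bound $|I_1^{\varepsilon}(\psi)|\leq \frac{1}{(n-1)!}\|\psi\|_{L^1}$.

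The main obstacle is $I_2^{\varepsilon}(\psi)$, because the crude estimate $t^{n-2}\|\psi\|_{L^1}$ is not integrable on $[1,\infty)$; one must actually exploit the oscillation of $e^{-iP(x)t}$ as $t\to\infty$. This is where Lemma \ref{Lemma_Fourier_transformLexpontion_quadratic_form} enters: Plancherel's identity together with the Gaussian Fourier computation gives
\begin{equation*}
\int_{\mathbb{R}^{2n}}\psi(x)\, e^{-iP(x)t}\,\dd x = \frac{1}{(2t)^n}\int_{\mathbb{R}^{2n}} [\mathcal{F}^{-1}\psi](\xi)\, e^{\frac{i}{4t}P(\xi)}\,\dd \xi,
\end{equation*}
which contributes the decisive decay $t^{-n}$. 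Consequently the integrand of $I_2^{\varepsilon}(\psi)$ is dominated, uniformly in $\varepsilon\geq 0$, by $\frac{1}{2^{n}\Gamma(n-1)}\,t^{-2}\|\mathcal{F}^{-1}\psi\|_{L^1}$, which is integrable on $[1,\infty)$. A second application of dominated convergence provides $\lim_{\varepsilon\downarrow 0} I_2^{\varepsilon}(\psi)$ and the bound $|I_2^{\varepsilon}(\psi)|\leq \frac{1}{2^{n}\Gamma(n-1)}\|\mathcal{F}^{-1}\psi\|_{L^1}$.

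Adding the two pieces establishes the existence of the limit $\frac{1}{P^{n-1}}[\psi]$, and the inequality \eqref{GL_inequality_distribution_P_Fourier_transform} follows with $C:=\max\bigl\{\frac{1}{(n-1)!},\,\frac{1}{2^{n}\Gamma(n-1)}\bigr\}$. The proof is thus essentially a packaging of the two estimates already derived above the proposition; the only real ingredient beyond elementary dominated convergence is the Fourier-theoretic identity that converts the $t\to\infty$ regime into decay, which is exactly the role played by Lemma \ref{Lemma_Fourier_transformLexpontion_quadratic_form}.
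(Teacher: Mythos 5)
Your proposal is correct and follows exactly the paper's own route: the Gamma-integral representation, the split of the $t$-integral at $t=1$ into $I_1^{\varepsilon}$ and $I_2^{\varepsilon}$, and Lemma \ref{Lemma_Fourier_transformLexpontion_quadratic_form} to extract the decisive $t^{-n}$ decay in the tail via the Fourier multiplication formula. The two pieces and the resulting bounds by $\|\psi\|_{L^1}$ and $\|\mathcal{F}^{-1}\psi\|_{L^1}$ are precisely those the paper derives in the paragraphs preceding the proposition.
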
 
\begin{remark}
In \cite{Gelfand_Shilov} various distributions associated to the quadratic form $P(x)$ have been defined. We remark that $1/P^{n-1}$ in Proposition \ref{Proposition_Distribution_P(x)-ivarepsilon} 
coincides with the value of $(P+i0)^{\lambda}$ in \cite[Chapter III, Section 2.4]{Gelfand_Shilov} at $\lambda=-n+1$, cf. Proposition \ref{Propoosition_meromorphic_extension_distribution}. More precisely, it holds: 
\begin{equation}\label{GL_comparison_Distribution_GS_remark}
\frac{1}{P^{n-1}} \big{[} \psi \big{]} = \Big{(} \big{(}P+i0\big{)}^{-n+1},\psi\Big{)} \hspace{3ex} \mbox{\it for all } \hspace{3ex} \psi \in \mathcal{S}(\mathbb{R}^{2n}). 
\end{equation}
Equation (\ref{GL_comparison_Distribution_GS_remark}) is proven in Proposition \ref{Appendix_proposition_1} of the Appendix. In the case $n=2$ the left hand 
side of (\ref{GL_comparison_Distribution_GS_remark}) has been represented in another form in \cite[Formula (5.6), p. 331]{MuellerRicci}. 
\end{remark}
In order to modify inequality (\ref{GL_inequality_distribution_P_Fourier_transform}) we perform a standard estimate: 
\begin{align*}
\big{\|} \mathcal{F}^{-1}\psi \big{\|}_{L^1}=\frac{1}{(2\pi)^n} \Big{\{} \int_{|x| \leq 1} +\int_{|x|>1} \Big{\}} \Big{|}\int_{\mathbb{R}^{2n}} \psi(y) e^{ix \cdot y} \; \dd y\Big{|} \; \dd x. 
\end{align*}
We can choose a constant $c>0$ such that the first  integral can be estimated by: 
\begin{equation*}
 \int_{|x| \leq 1} \int_{\mathbb{R}^{2n}} |\psi(y)| \; \dd y \; \dd x \leq c \| \psi \|_{L^1}. 
\end{equation*}
Let $\Delta= \sum_{j=1}^{2n} \partial_{x_j}^2$ denote the Laplace operator on $\mathbb{R}^{2n}$ and fix $\ell \in \mathbb{N}$. Then we can use the relation 
\begin{equation*}
\mathcal{F}^{-1} \Delta^{\ell} \psi=(-1)^{\ell} |\cdot |^{2\ell} \mathcal{F}^{-1} \psi 
\end{equation*}
and choose $\ell \in \mathbb{N}$ sufficiently large such that $|\cdot|^{-2\ell}$ becomes an integrable function over the exterior domain $\{ x \in \mathbb{R}^{2n} \: : \: |x|>1\}$ of the unit ball. 
Then we have: 
\begin{align*}
\int_{|x|>1} \big{|} \mathcal{F}^{-1} \psi\big{|}(x) \; \dd x
&=\int_{|x|>1} |x|^{-2\ell} \Big{|} \mathcal{F}^{-1} \Delta^{\ell} \psi(x) \Big{|} \; \dd x\\
&\leq \frac{1}{(2\pi)^n} \int_{|x|>1} \int_{\mathbb{R}^{2n}} |x|^{-2\ell} \big{|} (\Delta^{\ell}\psi) (u) \big{|} \; \dd u\; \dd x \leq C \| \Delta^{\ell} \psi \big{\|}_{L^1}. 
\end{align*}
Hence, with a suitable constant $d>0$ we have for all $\psi \in\mathcal{S}(\mathbb{R}^{2n})$: 
\begin{equation*}
\big{\|} \mathcal{F}^{-1} \psi \big{\|}_{L^1} \leq d \Big{(} \| \psi\|_{L^1} + \big{\|} \Delta^{\ell} \psi \big{\|}_{L^1} \Big{)}. 
\end{equation*}
Therefore the following corollary of Proposition \ref{Proposition_Distribution_P(x)-ivarepsilon} holds: 
\begin{corollary}\label{Proposition_Distribution_P(x)-ivarepsilon_2_version}
There is $\ell \in \mathbb{N}$ and a constant $C_{\ell}>0$ independent of $\psi \in \mathcal{S}(\mathbb{R}^{2n})$ and $\varepsilon >0$ such that 
\begin{equation*}
\left| \int_{\mathbb{R}^{2n}} \frac{\psi(x)}{[P(x)-i \varepsilon]^{n-1}} \; \dd x \right| \leq C_{\ell} \Big{(} \| \psi\|_{L^1} + \big{\|} \Delta^{\ell} \psi \big{\|}_{L^1} \Big{)}. 
\end{equation*}
\end{corollary}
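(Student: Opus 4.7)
The plan is to derive Corollary \ref{Proposition_Distribution_P(x)-ivarepsilon_2_version} as a direct consequence of Proposition \ref{Proposition_Distribution_P(x)-ivarepsilon} combined with an elementary $L^1$-bound on the inverse Fourier transform. Proposition \ref{Proposition_Distribution_P(x)-ivarepsilon} already controls the left-hand side by $\|\psi\|_{L^1} + \|\mathcal{F}^{-1}\psi\|_{L^1}$, so it only remains to estimate $\|\mathcal{F}^{-1}\psi\|_{L^1}$ in terms of $\|\psi\|_{L^1}$ and $\|\Delta^{\ell}\psi\|_{L^1}$ for a suitably chosen $\ell \in \mathbb{N}$.

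My first step is to split the integral defining $\|\mathcal{F}^{-1}\psi\|_{L^1}$ into the unit ball $\{|x|\leq 1\}$ and its complement $\{|x|>1\}$. On the bounded region the trivial estimate $\|\mathcal{F}^{-1}\psi\|_\infty \leq (2\pi)^{-n}\|\psi\|_{L^1}$ together with the finite volume of the unit ball yields a bound of the form $c\,\|\psi\|_{L^1}$.

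On the exterior region I would exploit the symbol identity $\mathcal{F}^{-1}(\Delta^{\ell}\psi)(x) = (-1)^{\ell}|x|^{2\ell}\mathcal{F}^{-1}\psi(x)$, which gives the pointwise bound $|\mathcal{F}^{-1}\psi(x)| = |x|^{-2\ell}\,|\mathcal{F}^{-1}(\Delta^{\ell}\psi)(x)| \leq (2\pi)^{-n}|x|^{-2\ell}\|\Delta^{\ell}\psi\|_{L^1}$. Choosing $\ell$ large enough (any $\ell > n$ works, since then $2\ell > 2n$ makes $|x|^{-2\ell}$ integrable on $\{|x|>1\}\subset \mathbb{R}^{2n}$) produces an estimate of the form $C\,\|\Delta^{\ell}\psi\|_{L^1}$ for the exterior piece.

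The only real bookkeeping point — and the closest thing to an obstacle — is making sure the exponent $\ell$ is chosen consistently with the dimension $2n$ so that the tail integral $\int_{|x|>1}|x|^{-2\ell}\,\dd x$ is finite; no delicate cancellation or regularization is needed since Proposition \ref{Proposition_Distribution_P(x)-ivarepsilon} has already absorbed the singularity of $1/[P(x)-i\varepsilon]^{n-1}$. Adding the two bounds gives $\|\mathcal{F}^{-1}\psi\|_{L^1}\leq d(\|\psi\|_{L^1} + \|\Delta^{\ell}\psi\|_{L^1})$, and substituting this into the estimate of Proposition \ref{Proposition_Distribution_P(x)-ivarepsilon} yields the claimed inequality with $C_{\ell} := C\max\{1+d,\,d\}$, independent of $\psi$ and $\varepsilon$.
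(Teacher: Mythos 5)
Your proposal is correct and follows essentially the same route as the paper: split $\|\mathcal{F}^{-1}\psi\|_{L^1}$ over $\{|x|\le 1\}$ and $\{|x|>1\}$, use the trivial $L^\infty$-bound on the inner piece, use the identity $\mathcal{F}^{-1}(\Delta^{\ell}\psi)=(-1)^{\ell}|\cdot|^{2\ell}\mathcal{F}^{-1}\psi$ with $\ell>n$ on the outer piece, and substitute into Proposition \ref{Proposition_Distribution_P(x)-ivarepsilon}. Your explicit remark that $\ell>n$ is what is needed for integrability of $|x|^{-2\ell}$ outside the unit ball in $\mathbb{R}^{2n}$ matches what the paper leaves implicit.
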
\label{Corollary_bound_Fourier_transfrom_Psi}
With the notation in (\ref{Phi_rho_varepsilion}) we estimate 
\begin{align*}
\int_{\mathbb{R}^{2n}} \frac{\varphi_{t, \varepsilon}(x)}{[P(x)-i \varepsilon]^{n-1}} \; \dd x = \int_{\mathbb{R}^{2n}} \frac{\varphi_{t,0}(x)}{[P(x)-i \varepsilon]^{n-1}} \; \dd x
+  \int_{\mathbb{R}^{2n}} \frac{\varphi_{t, \varepsilon}(x)- \varphi_{t,0}(x)}{[P(x)-i \varepsilon]^{n-1}} \; \dd x. 
\end{align*}
According to Proposition \ref{Proposition_Distribution_P(x)-ivarepsilon} the first integral converges for all $t >0$ as $\varepsilon \rightarrow 0$ with limit:
\begin{equation*}
\lim_{\varepsilon \rightarrow 0} \int_{\mathbb{R}^{2n}} \frac{\varphi_{t,0}(x)}{[P(x)-i \varepsilon]^{n-1}} \; \dd x=\frac{1}{P^{n-1}} \big{[} \varphi_{t,0}\big{]}. 
\end{equation*}
Since the $L^1$-norms: 
\begin{equation*}
\big{\|}\varphi_{t, \varepsilon}- \varphi_{t,0} \big{\|}_{L^1} \hspace{3ex} \mbox{\it and} \hspace{3ex} \big{\|} \Delta^{\ell} \big{(} \varphi_{t, \varepsilon}- \varphi_{t,0} \big{)} \big{\|}_{L^1}
\end{equation*}
tend to zero as $\varepsilon \rightarrow 0$, it follows again from Corollary \ref{Proposition_Distribution_P(x)-ivarepsilon_2_version}  that 
\begin{equation*}
\lim_{\varepsilon \rightarrow 0} \int_{\mathbb{R}^{2n}} \frac{\varphi_{t, \varepsilon}(x)- \varphi_{t,0}(x)}{[P(x)-i \varepsilon]^{n-1}} \: \dd x=0.  
\end{equation*}
Therefore we obtain the following second form of the fundamental solution: 
\begin{theorem}
Let $\varphi = \varphi(x,z) \in \mathcal{S}(\mathbb{R}^{2n+s})$. With the notation in Lemma \ref{Lemma_second_form_fundamental_solution_limit}  we have: 
\begin{equation*}
K^{1,0}_{0,s} (\varphi)=\left(\frac{2}{i}\right)^{n-2} \frac{1}{(2\pi)^{n+ \frac{s}{2}}}
 \int_0^{\infty} \frac{1}{\sinh t \cosh^{n-1} t}    \frac{1}{P^{n-1}} \big{[} \varphi_t \big{]} \; \dd t,
\end{equation*}
where
\begin{equation*}
       \varphi_{t}(x):=\varphi_{t,0}(x)=  \int_0^{\infty} \frac{\dd^{n-1}}{\dd r^{n-1}} \Big{[} r^{n+s-2} \widetilde{\varphi}(x,r) \Big{]} 
                                 e^{- \frac{ir}{4} P(x)\coth t }\;  \dd r\in \mathcal{S}(\mathbb{R}^{2n}). 
\end{equation*}
\end{theorem}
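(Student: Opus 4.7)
The plan is to take the representation provided by Lemma~\ref{Lemma_second_form_fundamental_solution_limit} and to interchange the limit $\varepsilon\to 0$ with the outer $t$-integral, since once this exchange is justified the pointwise identification of the inner $\varepsilon$-limit is exactly the content of the paragraph immediately preceding the statement.

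For the pointwise limit, I would for each fixed $t>0$ split
\begin{equation*}
\varphi_{t,\varepsilon}=\varphi_{t,0}+\bigl(\varphi_{t,\varepsilon}-\varphi_{t,0}\bigr).
\end{equation*}
Proposition~\ref{Proposition_Distribution_P(x)-ivarepsilon} applied to $\varphi_{t,0}$ produces the value $\tfrac{1}{P^{n-1}}[\varphi_{t}]$, while Corollary~\ref{Proposition_Distribution_P(x)-ivarepsilon_2_version} applied to the difference, combined with the vanishing of $\|\varphi_{t,\varepsilon}-\varphi_{t,0}\|_{L^1}$ and $\|\Delta^{\ell}(\varphi_{t,\varepsilon}-\varphi_{t,0})\|_{L^1}$ as $\varepsilon\to 0$ (immediate from dominated convergence in the $r$-integral defining $\varphi_{t,\varepsilon}$, using the Schwartz decay of $\widetilde\varphi$ in $r$), shows that the contribution from the difference tends to zero.

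To move the limit past the outer integral I would invoke the dominated convergence theorem in $t$. Corollary~\ref{Proposition_Distribution_P(x)-ivarepsilon_2_version} provides, uniformly in $\varepsilon\ge 0$, the bound
\begin{equation*}
\Bigl|\int_{\mathbb{R}^{2n}}\frac{\varphi_{t,\varepsilon}(x)}{[P(x)-i\varepsilon]^{n-1}}\,\dd x\Bigr|\le C_{\ell}\bigl(\|\varphi_{t,\varepsilon}\|_{L^1}+\|\Delta^{\ell}\varphi_{t,\varepsilon}\|_{L^1}\bigr),
\end{equation*}
so what remains is to check that this bound multiplied by $1/(\sinh t\,\cosh^{n-1}t)$ admits an $\varepsilon$-independent majorant in $L^1(0,\infty)$. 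The tail at $t\to\infty$ is harmless, since the prefactor decays exponentially while $\coth t\to 1$ keeps both $L^1$-norms uniformly bounded.

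The hard part is the regime $t\downarrow 0$: there the prefactor $1/(\sinh t\,\cosh^{n-1}t)\sim 1/t$ is not integrable by itself, and the oscillatory factor $\exp\{-\tfrac{ir}{4}[P(x)-i\varepsilon]\coth t\}$ in the defining integral of $\varphi_{t,\varepsilon}$ becomes highly singular as $\coth t\to\infty$. My strategy is to extract the missing decay by integrating by parts in $r$ inside the defining integral of $\varphi_{t,\varepsilon}$: each such step converts a factor $\coth t$ into $\tanh t\sim t$ at the cost of an additional inverse power of $[P(x)-i\varepsilon]$, and the resulting higher inverse powers of $P(x)-i\varepsilon$ can be absorbed by iterating the same Fourier-side and integration-by-parts device that underlies Corollary~\ref{Proposition_Distribution_P(x)-ivarepsilon_2_version}. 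Once sufficient small-$t$ decay of the relevant $L^1$-norms has been extracted, dominated convergence pushes the $\varepsilon$-limit through the $t$-integral and yields the claimed identity.
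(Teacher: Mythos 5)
Your pointwise identification of the inner $\varepsilon$-limit is exactly the paper's argument: split $\varphi_{t,\varepsilon}=\varphi_{t,0}+(\varphi_{t,\varepsilon}-\varphi_{t,0})$, apply Proposition~\ref{Proposition_Distribution_P(x)-ivarepsilon} to the first summand, and apply the $\varepsilon$-uniform estimate of Corollary~\ref{Proposition_Distribution_P(x)-ivarepsilon_2_version} together with $\|\varphi_{t,\varepsilon}-\varphi_{t,0}\|_{L^1},\ \|\Delta^{\ell}(\varphi_{t,\varepsilon}-\varphi_{t,0})\|_{L^1}\to 0$ to kill the second. That part is fine and is the whole content of the paper's proof.

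The remaining bulk of your proposal, however, is aimed at a problem that does not exist: you set out to interchange $\lim_{\varepsilon\to 0}$ with the outer $t$-integral, and you identify the small-$t$ regime as ``the hard part.'' But look again at Lemma~\ref{Lemma_second_form_fundamental_solution_limit}: the $\varepsilon$-limit already sits \emph{inside} the $t$-integral,
\begin{equation*}
K^{1,0}_{0,s}(\varphi)=\left(\tfrac{2}{i}\right)^{n-2}\tfrac{1}{(2\pi)^{n+s/2}}\int_0^{\infty}\frac{1}{\sinh t\,\cosh^{n-1}t}\;\lim_{\varepsilon\to 0}\int_{\mathbb{R}^{2n}}\frac{\varphi_{t,\varepsilon}(x)}{[P(x)-i\varepsilon]^{n-1}}\,\dd x\;\dd t .
\end{equation*}
Once the inner limit has been identified for each fixed $t>0$ as $\tfrac{1}{P^{n-1}}[\varphi_t]$, you may substitute it into the lemma directly; no interchange of limit and $t$-integral is required, and hence no dominated-convergence majorant, no analysis of $t\downarrow 0$, and no iterated integration by parts in $r$ to manufacture powers of $\tanh t$. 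The lemma already asserts that the iterated expression is finite and equals $K^{1,0}_{0,s}(\varphi)$, so integrability in $t$ of the identified integrand comes for free. Your ``hard part'' is an artefact of misreading where the limit is placed, not a gap in the derivation.
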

\section{On the singular support of $K^{1,0}_{0,s}$}
\label{Section_Singular_set_K_0_s_+}
In order to obtain some information on the singular support of the distribution $K_{0,s}$ in (\ref{eq:K_0_s_+_Section_7}) (or Theorem \ref{th:fundamental_solution}) we 
move the Fourier transform in the integral (\ref{eq:integrarion_G_varepsilon_Fourier_transform}) from the function $\varphi$ to the integral kernel. More precisely, we need to determine 
the Fourier transforms 
\begin{equation*}
\mathcal{F}_{\vartheta \rightarrow z} \Big{\{} |\vartheta|^{n-1} e^{ - \lambda |\vartheta|} \Big{\}}(z)\in L^2(\mathbb{R}^s), 
\end{equation*}
where $\lambda \in \mathbb{C}$ with $\textup{Re}(\lambda)>0$. Recall the following formula (cf. \cite{Taylor}, p. 219):  
\begin{equation*}
\mathcal{F}_{\vartheta \rightarrow z} \Big{[} e^{- \lambda |\vartheta|}\Big{]}(z)= c_s\frac{\lambda}{(\lambda^2+|z|^2)^{\frac{s+1}{2}}}, \hspace{3ex} \mbox{\it where } 
\hspace{3ex} c_s= \frac{2^{\frac{n}{2}}}{\sqrt{\pi}} \Gamma \Big{(} \frac{s+1}{2} \Big{)}.  
\end{equation*}
Taking derivatives with respect to $\lambda$ under the integral yields:
\begin{align}
\mathcal{F}_{\vartheta \rightarrow z} \Big{[}|\vartheta|^{n-1} e^{- \lambda |\vartheta|}\Big{]}(z)
&=(-1)^{n-1} \frac{\dd^{n-1}}{\dd \lambda^{n-1}} \mathcal{F}_{\vartheta \rightarrow z} \Big{[}e^{- \lambda |\vartheta|} \Big{]}(z)\notag\\
&=(-1)^{n-1}c_s \frac{\dd^{n-1}}{\dd \lambda^{n-1}} \frac{\lambda}{(\lambda^2+|z|^2)^{\frac{s+1}{2}}}\notag\\
&=\sum_{j=0}^{n-1} \frac{Q_j(\lambda)}{(\lambda^2+|z|^2)^{\frac{s+1}{2}+j}},
\label{eq:finite_sum_Q_j}
\end{align}
where $Q_j$ is a polynomial of the variable $\lambda$ of some degree $\alpha_j:=\textup{deg} \: Q_j  \in \mathbb{N}_0$. By induction one verifies:   
\begin{equation}\label{eq:estimate_difference_of_the_degree}
2 \left( \frac{s+1}{2} +j\right) -\alpha_j \geq s+n-1, \hspace{3ex} \mbox{\it where} \hspace{3ex} j=1, \ldots,n-1. 
\end{equation}
In particular, with $\lambda:= \frac{1}{4}[iP(x)+\varepsilon]\coth t$ we find 
\begin{equation}\label{eq:lambda_plus_z}
\lambda^2+|z|^2= \frac{1}{16} \Big{(} -P(x)^2+2iP(x) \varepsilon + \varepsilon^2 \Big{)}\coth^2 t +|z|^2. 
\end{equation}
By using (\ref{eq:integrarion_G_varepsilon_Fourier_transform}) it follows that: 
\begin{align}\label{eq:integral_singularities}
\int_{\mathbb{R}^{2n+s}} G_{\varepsilon}(\xi, \vartheta, t) \big{[} \mathcal{F} \varphi \big{]} (\xi, \vartheta) \; \dd (\xi, \vartheta) 
&=\frac{1}{(2\sinh t)^n}\sum_{j=0}^{n-1}\int_{\textup{supp}(\varphi)} \frac{Q_j(\lambda)\varphi(x,z)}{(\lambda^2+|z|^2)^{\frac{s+1}{2}+j}} \; \dd (x,z). 
\end{align}
Suppose that 
\begin{equation}\label{eq:condition_varphi_support}
\textup{supp}(\varphi) \: \cap \:   \Big{\{} (x,z) \in \mathbb{R}^{2n+s} \: : \: 4|P(x)| \leq |z|\Big{\}} =  \emptyset, 
\end{equation}
and define: 
\begin{align*}
S:= \Big{\{} (x,z) \in \mathbb{R}^{2n+s} \: : \: P(x)=0\Big{\}}. 
\end{align*}
\par
By the condition (\ref{eq:condition_varphi_support}) and because of $\coth t >1$ for all $t >0$ it is clear from (\ref{eq:lambda_plus_z}) that we may put $\varepsilon=0$ on the right hand side 
of (\ref{eq:integral_singularities}) without causing a singularity in the integrand. Since $\textup{supp}(\varphi)$ does not intersect with $S$ we find from (\ref{eq:lambda_plus_z}) 
in the case $\varepsilon=0$:
\begin{align*}
\lambda^2+|z|^2&=- \frac{P(x)^2}{16} \coth^2 t +|z|^2 \in O(t^{2})\\
Q_j(\lambda) &= O(t^{\alpha_j}) \hspace{4ex} \mbox{\it as} \hspace{4ex} t \rightarrow 0. 
\end{align*}
Hence (\ref{eq:finite_sum_Q_j}) and (\ref{eq:estimate_difference_of_the_degree}) imply: 
\begin{equation*}
\int_{\textup{supp}(\varphi)} \frac{Q_j(\lambda)}{(\lambda^2+|z|^2)^{\frac{s+1}{2}+j}} \varphi(x,z) \; \dd (x,z)
\in O(t^{s+n-1}) \hspace{3ex} \mbox{\it as} \hspace{3ex} t \downarrow 0. 
\end{equation*}
Choose $\varepsilon=0$ such that $\lambda=\lambda_0= \frac{i}{4}P(x)\coth t $ and $(x,z) \in \mathbb{R}^{2n+s}$. Put
\begin{equation*}
K(x,z):=\int_0^{\infty}\frac{1}{(2\sinh t)^n}\sum_{j=0}^{n-1}\frac{Q_j(\lambda_0)}{\Big{(}-\frac{P(x)^2}{4} \coth^2 t+|z|^2\Big{)}^{\frac{s+1}{2}+j}} \; \dd t. 
\end{equation*}
Assuming (\ref{eq:condition_varphi_support}) for $\varphi \in \mathcal{S}(\mathbb{R}^{2n+s})$ we have shown that $K(x,z)$ defines a smooth function in a neighborhood of 
$\textup{supp}(\varphi)$.  Moreover, (\ref{eq:integral_singularities}) and (\ref{eq:K_0_s_+_Section_7}) imply that 
\begin{equation*}
K^{1,0}_{0,s}(\varphi)= \frac{1}{(2\pi)^{n+ s/2}} \int_{\mathbb{R}^{2n+s}}K(x,z) \varphi(x,z) \; \dd (x,z). 
\end{equation*}
Summarizing these observations we have shown: 
\begin{theorem}\label{thm:regular_set_K_s_0_+}
The singular support of the fundamental solution $K_{0,s}$ is contained in 
\begin{equation*}
\Big{\{} (x,z) \in \mathbb{R}^{2n+s} \: : \: 4 |P(x)| \leq |z|  \Big{\}}. 
\end{equation*}
\end{theorem}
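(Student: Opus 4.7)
The plan is to show that for every test function $\varphi \in \mathcal{S}(\mathbb{R}^{2n+s})$ whose support misses the cone $\{4|P(x)| \leq |z|\}$ the pairing $K^{1,0}_{0,s}(\varphi)$ equals $\int K(x,z)\varphi(x,z)\,\dd(x,z)$ for a kernel $K$ that is $C^{\infty}$ in a neighborhood of $\textup{supp}(\varphi)$; this immediately yields the stated inclusion for the singular support.

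Starting from (\ref{eq:K_0_s_+_Section_7}) together with the inner expression (\ref{eq:integrarion_G_varepsilon_Fourier_transform}), I would first move the $\vartheta$-Fourier transform from $\varphi$ onto the kernel $G_{\varepsilon}$. Using the classical formula for $\mathcal{F}_{\vartheta \to z}[e^{-\lambda |\vartheta|}]$ and differentiating $(n-1)$ times in $\lambda$, I would obtain a representation of $\mathcal{F}_{\vartheta \to z}[|\vartheta|^{n-1} e^{-\lambda|\vartheta|}](z)$ in the form (\ref{eq:finite_sum_Q_j}), with polynomials $Q_j(\lambda)$ of degrees $\alpha_j$ satisfying (\ref{eq:estimate_difference_of_the_degree}). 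This polynomial degree bound is the essential bookkeeping step, and is verified by induction on $n$ together with the Leibniz rule applied to $\lambda/(\lambda^2+|z|^2)^{(s+1)/2}$.

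Next I would justify passing to $\varepsilon \to 0$ inside the $t$-integral. Specialising $\lambda = \tfrac14 [iP(x)+\varepsilon]\coth t$ yields (\ref{eq:lambda_plus_z}). The geometric condition (\ref{eq:condition_varphi_support}) together with $\coth t > 1$ keeps $\lambda^2 + |z|^2$ bounded away from zero uniformly in $t \in (0,\infty)$ and on compacta in a small neighborhood of $\textup{supp}(\varphi)$, so the limit $\varepsilon \to 0$ can be taken pointwise in the integrand of (\ref{eq:integral_singularities}), producing the value $\lambda_0 = \tfrac{i}{4}P(x)\coth t$.

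It remains to verify that the resulting $t$-integral defines a smooth function $K(x,z)$. As $t \to 0$, $\coth t$ is of order $t^{-1}$, so the degree estimate (\ref{eq:estimate_difference_of_the_degree}) combined with (\ref{eq:lambda_plus_z}) at $\varepsilon=0$ forces $Q_j(\lambda_0)/(\lambda_0^2+|z|^2)^{(s+1)/2+j}$ to be of order $t^{s+n-1}$; combined with the prefactor $1/(2\sinh t)^n$ of order $t^{-n}$, the whole integrand is of order $t^{s-1}$ near $t=0$, which is integrable for $s \geq 1$. Decay as $t \to \infty$ is immediate from the exponential decay of $1/\sinh^n t$. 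Smoothness of $K$ in $(x,z)$ follows because differentiation under the integral produces only additional polynomial factors in $\lambda_0$ and $\coth t$ that are absorbed by the same degree bookkeeping on slightly shrunk neighborhoods of $\textup{supp}(\varphi)$. The main obstacle is exactly the polynomial-degree accounting and the uniform nonvanishing of $\lambda_0^2 + |z|^2$ on the support region; once these are in place, everything reduces to routine differentiation under the integral sign.
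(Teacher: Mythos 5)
Your proposal follows the paper's proof essentially step for step: you move the Fourier transform onto the kernel via $\mathcal{F}_{\vartheta \to z}[e^{-\lambda|\vartheta|}]$, differentiate $(n-1)$ times to obtain the sum (\ref{eq:finite_sum_Q_j}) with the degree bound (\ref{eq:estimate_difference_of_the_degree}), invoke the geometric condition (\ref{eq:condition_varphi_support}) together with $\coth t>1$ to pass $\varepsilon\to 0$ and keep $\lambda^2+|z|^2$ nonvanishing, and then use the same $O(t^{s+n-1})$ balance against the $\sinh^{-n}t$ prefactor to show the $t$-integral defines a smooth kernel on a neighborhood of $\mathrm{supp}(\varphi)$. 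This is exactly the argument carried out in Section~\ref{Section_Singular_set_K_0_s_+}, so the proposal is correct and the approach coincides with the paper's.
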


\begin{example}
In case of the Heisenberg group and the specific choice of the functions $\lambda$ and $\mu$ in  Example \ref{Heisenberg_group_case_exampele_Mueler_Ricci_Tie} we have even seen that, 
cf. p. 1295 in \cite{Tie}
\begin{equation*}
\textup{supp}\big{(}K_{0,1}^{\lambda, \mu} \big{)} \subset \Big{\{} (x,z) \in \mathbb{R}^{2n+s} \: : \: 4 |P(x)| \leq |z|  \Big{\}}.
\end{equation*} 
\end{example}
A close relation between the structure of geodesics tangent to the left invariant distribution spanned by $\{X_1,\ldots,X_{2n}\}$ and the cone described by $P(x)=0$ was 
observed in~\cite{KorolkoMarkina}.
\section{On the fundamental solution of $\Delta_{r,s}$ in the case $r>0$}
\label{s_r_>_0}
In the case $r>0$ is seems difficult to interpret the formal expression of $K_{r,s}(x,y)$ in Section \ref{Section_From_Sub-Laplacian_to_UHO} in a meaningful way as 
a tempered distribution. In fact, in this last section we will show that $\Delta_{r,s}$ with $r>0$ does not even have a fundamental solution in $\mathcal{S}^{\prime}(\mathbb{R}^{2n+r+s})$. In 
Corollary \ref{cor:explicit_form_G_r_s} we calculated the differential operator $\mathcal{G}_{r,s}$ with 
\begin{equation*}
\mathcal{F} \circ \Delta_{r,s} = \mathcal{G}_{r,s} \circ \mathcal{F},
\end{equation*}
where $\mathcal{F}$ denotes the Fourier transform on $\mathcal{S}(\mathbb{R}^{2n+r+s})$. By using the relation (\ref{eq:relation_between_Omega_rho_first_place}) between 
$\Omega(\eta)$ and $\rho(\eta)$ we can rewrite $\mathcal{G}_{r,s}$ as:
\begin{equation}\label{eq:expression_L_reminder}
\mathcal{G}_{r,s}\varphi=- P(\xi) \varphi + \frac{|\eta_+|^2-|\eta_-|^2}{4} \mathcal{L}\varphi -2i \Big{\langle} \Omega(\eta) \tau \xi,  \nabla_{\xi} \varphi \Big{\rangle}, 
\end{equation}
where $\mathcal{L}$ was the ultra-hyperbolic operator on $\mathbb{R}^{2n}$ defined in (\ref{Definition_classical_UHO_introduction}). 
\vspace{1ex}\par 
Let us fix $\eta \in \mathbb{R}^{r+s}$. With $\varphi \in \mathcal{S}(\mathbb{R}^{2n})$ we introduce the following two operators:
\begin{equation}\label{Definition_A_vartheta}
A_{\eta}\varphi:= -P(\xi) \varphi + \frac{|\eta_+|^2-|\eta_-|^2}{4} \mathcal{L}\varphi,   \qquad
B_{\eta}\varphi:=- 2i \Big{\langle}\Omega(\eta) \tau \xi, \nabla_{\xi} \varphi \Big{\rangle}. 
\end{equation}
According to (\ref{eq:expression_L_reminder}) the operator $\mathcal{G}_{r,s}$ decomposes as: 
$
\mathcal{G}_{r,s}= A_{\eta}+ B_{\eta}$.
Moreover,  $A_{\eta}$ and $B_{\eta}$ are formally self-adjoint operators on $L^2(\mathbb{R}^{2n})$. 
\vspace{1ex}\par 
Let $\eta \in \mathbb{R}^{r+s}$ and put $c_{\eta}=\frac{1}{|\eta|_{r,s}}:= \big||\eta_+|^2-|\eta_-|^2\big|^{-\frac{1}{2}}$. Consider the positive valued function: 
\begin{equation}\label{Definition_varphi_theta}
\varphi_{\eta}(\xi)= \exp \Big{(} -c_{\eta} |\xi|^2 \Big{)} \in \mathcal{S}(\mathbb{R}^{2n}). 
\end{equation}
Then $\varphi_{\eta}(0)=1$ and we also have 
\begin{align*}
\mathcal{L} \varphi_{\eta}
&= \sum_{j=1}^n \frac{\partial}{\partial \xi_j} \Big{[} -2 \xi_j c_{\eta} \varphi_{\eta} \Big{]}- \frac{\partial}{\partial \xi_{j+n}} \Big{[}-2 \xi_{j+n} c_{\eta} \varphi_{\eta} \Big{]}\\
&= -2n c_{\eta} \varphi_{\eta}+ 4c_{\eta}^2 P(\xi) \varphi_{\eta} + 2n c_{\eta} \varphi_{\eta}= \frac{4}{\big||\eta_+|^2- |\eta_-|^2\big|} P(\xi) \varphi_{\eta}. 
\end{align*}
This calculation shows that 
\begin{equation*}
A_{\eta}\varphi_{\eta}=
\begin{cases}
0 \quad&\text{if}\quad |\eta_+|^2- |\eta_-|^2>0
\\
-2P(\xi)\quad&\text{if}\quad |\eta_+|^2- |\eta_-|^2<0
\end{cases} 
\end{equation*}
\begin{corollary}
\label{injectivity_A_vartheta}
Let $\eta =(\eta_+, \eta_-) \in \mathbb{R}^{r+s}$ with $|\eta_+|^2- |\eta_-|^2>0$. Then $A_{\eta}$ is not injective as an operator on $\mathcal{S}(\mathbb{R}^{2n})$. 
\end{corollary}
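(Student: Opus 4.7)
The plan is to observe that the corollary follows almost immediately from the computation carried out just before the statement. The key point is that the explicit Schwartz function $\varphi_\eta(\xi) = \exp(-c_\eta |\xi|^2)$ with $c_\eta = \bigl||\eta_+|^2-|\eta_-|^2\bigr|^{-1/2}$ has already been shown to satisfy
\begin{equation*}
\mathcal{L}\varphi_\eta \;=\; \frac{4}{\bigl||\eta_+|^2-|\eta_-|^2\bigr|}\,P(\xi)\,\varphi_\eta.
\end{equation*}
This identity holds regardless of the sign of $|\eta_+|^2-|\eta_-|^2$.

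First, under the hypothesis $|\eta_+|^2 - |\eta_-|^2 > 0$, one has $\bigl||\eta_+|^2-|\eta_-|^2\bigr| = |\eta_+|^2-|\eta_-|^2$. I would then simply substitute this into the definition of $A_\eta$ from (\ref{Definition_A_vartheta}):
\begin{equation*}
A_\eta \varphi_\eta \;=\; -P(\xi)\,\varphi_\eta + \frac{|\eta_+|^2-|\eta_-|^2}{4}\cdot\frac{4}{|\eta_+|^2-|\eta_-|^2}\,P(\xi)\,\varphi_\eta \;=\; 0.
\end{equation*}
Since $\varphi_\eta \in \mathcal{S}(\mathbb{R}^{2n})$ and $\varphi_\eta(0)=1\neq 0$, it is a nonzero element of the kernel of $A_\eta$, which immediately contradicts injectivity.

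There is essentially no obstacle: the entire argument consists in reading off the sign from the preceding computation and noting that $\varphi_\eta$ is a genuine, nonzero Schwartz function. The only thing worth emphasising in the write-up is that $c_\eta$ is well defined precisely because the hypothesis guarantees $|\eta_+|^2-|\eta_-|^2\neq 0$, so the Gaussian decays and indeed lies in $\mathcal{S}(\mathbb{R}^{2n})$. I would also remark (for orientation, not as part of the formal proof) that the complementary case $|\eta_+|^2-|\eta_-|^2 < 0$ yields $A_\eta\varphi_\eta = -2P(\xi)\varphi_\eta \not\equiv 0$, which is why the sign assumption is needed for the kernel argument in this form.
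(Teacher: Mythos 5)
Your proof is correct and is exactly the argument the paper uses: the display immediately preceding the corollary already computes $\mathcal{L}\varphi_\eta = 4c_\eta^2 P(\xi)\varphi_\eta$ and concludes $A_\eta\varphi_\eta = 0$ when $|\eta_+|^2-|\eta_-|^2>0$, so the corollary follows since $\varphi_\eta$ is a nonzero Schwartz function. You also silently correct a small typo in the paper's display (which reads $-2P(\xi)$ where it should read $-2P(\xi)\varphi_\eta$ for the case $|\eta_+|^2-|\eta_-|^2<0$).
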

Now we wish to express $B_{\eta}$ in a geometric form. Let $\eta \in \mathbb{R}^{r+s}$ be fixed and consider the one-parameter matrix group: 
\begin{equation*}
\mathbb{R} \ni t \mapsto e^{t \Omega(\eta) \tau}  \in \mathbb{R}^{2n \times 2n}. 
\end{equation*}
By applying the relations in Lemma \ref{lemma:properties_of_Omega} we can calculate the exponentials.  
\begin{lemma}\label{Formula_exponentials_Omega}
Let $t \in \mathbb{R}$ and let $\eta \in \mathbb{R}^{r+s} \cong \mathbb{R}^{r,s}$ be fixed. With the notation 
\begin{equation*}
|\eta|_{r,s}:= \sqrt{\big||\eta_+|^2-|\eta_-|^2\big|}
\end{equation*}
one obtains: 
\begin{equation}\label{eq:period1}
e^{t \Omega(\eta) \tau}= 
\begin{cases}
\cos \left( \frac{t |\eta|_{r,s}}{2} \right) I+ \frac{2}{|\eta|_{r,s}} \sin \left( \frac{t |\eta|_{r,s}}{2} \right) \Omega(\eta) \tau
\quad&\text{if}\quad|\eta_+|^2-|\eta_-|^2>0.
\\
\cosh \left( \frac{t |\eta|_{r,s}}{2} \right) I+ \frac{2}{|\eta|_{r,s}} \sinh \left( \frac{t |\eta|_{r,s}}{2} \right) \Omega(\eta) \tau 
\quad&\text{if}\quad|\eta_+|^2-|\eta_-|^2<0.
\end{cases}
\end{equation}
and
\begin{equation}\label{eq:period2}
e^{t \tau\Omega(\eta) }= 
\begin{cases}
\cos \left( \frac{t |\eta|_{r,s}}{2} \right) I+ \frac{2}{|\eta|_{r,s}} \sin \left( \frac{t |\eta|_{r,s}}{2} \right) \tau\Omega(\eta) 
\quad&\text{if}\quad|\eta_+|^2-|\eta_-|^2>0.
\\
\cosh \left( \frac{t |\eta|_{r,s}}{2} \right) I+ \frac{2}{|\eta|_{r,s}} \sinh \left( \frac{t |\eta|_{r,s}}{2} \right) \tau\Omega(\eta)  
\quad&\text{if}\quad|\eta_+|^2-|\eta_-|^2<0.
\end{cases}
\end{equation}
In particular, if $|\eta_+|^2-|\eta_-|^2>0$, then $e^{t\Omega(\eta) \tau}$ and $e^{t\tau\Omega(\eta)}$ are periodic in $t$ with period $q_{\eta}= \frac{4 \pi}{|\eta|_{r,s}}$. 
\end{lemma}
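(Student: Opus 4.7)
The plan is to compute both exponentials by a direct power-series argument, using the algebraic identity $[\Omega(\eta)\tau]^2 = [\tau\Omega(\eta)]^2 = -\tfrac{\scal{\eta}{\eta}_{r,s}}{4}I$ from Lemma \ref{lemma:properties_of_Omega}(1). This identity says that the square of either matrix is a scalar multiple of the identity; consequently every power reduces, via an easy induction, to either $I$ or the matrix itself, and the exponential series collapses to a two-term expression.

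Concretely, set $M := \Omega(\eta)\tau$ and $\alpha^2 := -\tfrac{\scal{\eta}{\eta}_{r,s}}{4}$, so that $M^2 = \alpha^2 I$. Then $M^{2k} = \alpha^{2k} I$ and $M^{2k+1} = \alpha^{2k} M$. Splitting the series $e^{tM} = \sum_{k\ge 0} \tfrac{t^k M^k}{k!}$ into even and odd indices yields
\[
e^{tM} = \Bigl(\sum_{k\ge 0} \tfrac{(t\alpha)^{2k}}{(2k)!}\Bigr) I + \tfrac{1}{\alpha}\Bigl(\sum_{k\ge 0} \tfrac{(t\alpha)^{2k+1}}{(2k+1)!}\Bigr) M.
\]
In the case $|\eta_+|^2 - |\eta_-|^2 > 0$ one has $\alpha^2 = -|\eta|_{r,s}^2/4 < 0$; writing $\alpha = i|\eta|_{r,s}/2$ converts the two power series into $\cos(t|\eta|_{r,s}/2)$ and $\tfrac{2}{|\eta|_{r,s}}\sin(t|\eta|_{r,s}/2)$ respectively, which is precisely the first formula in \eqref{eq:period1}. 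When $|\eta_+|^2 - |\eta_-|^2 < 0$, $\alpha = |\eta|_{r,s}/2$ is real and the two sums are literally $\cosh(t|\eta|_{r,s}/2)$ and $\tfrac{2}{|\eta|_{r,s}}\sinh(t|\eta|_{r,s}/2)$, giving the second line.

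The computation for $\tau\Omega(\eta)$ is identical: Lemma \ref{lemma:properties_of_Omega}(1) supplies the same square relation, so \eqref{eq:period2} follows by an unchanged argument. Periodicity in the positive case is immediate from the fact that $\cos$ and $\sin$ have period $2\pi$, so $t \mapsto e^{tM}$ has period $q_\eta = 4\pi/|\eta|_{r,s}$. There is no real obstacle here; the only subtlety is bookkeeping, since $|\eta|_{r,s}$ is defined as the square root of the \emph{absolute value} of $\scal{\eta}{\eta}_{r,s}$, so one must track the sign of $\scal{\eta}{\eta}_{r,s}$ carefully in order to assign the trigonometric formula to the positive case and the hyperbolic formula to the negative case.
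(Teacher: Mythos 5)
Your proof is correct and takes essentially the same approach as the paper: expand $e^{tM}$ as a power series, use $M^2 = -\tfrac{\scal{\eta}{\eta}_{r,s}}{4}I$ from Lemma~\ref{lemma:properties_of_Omega}(1) to collapse even and odd powers, and recognize the resulting series as trigonometric or hyperbolic depending on the sign of $\scal{\eta}{\eta}_{r,s}$. The only cosmetic difference is that you package both signs into a single parameter $\alpha$ with $\alpha^2 = -\tfrac{\scal{\eta}{\eta}_{r,s}}{4}$, whereas the paper writes the positive case explicitly and then passes to the negative case via the identities $\cos(i\alpha)=\cosh\alpha$, $\sin(i\alpha)=i\sinh\alpha$; both yield the same computation.
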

\begin{proof}
We conclude from Lemma~\ref{lemma:properties_of_Omega}, (1) that for $|\eta_+|^2-|\eta_-|^2>0$
\begin{equation*}
\big{(} \Omega(\eta) \tau \big{)}^{2k}= \left( - \frac{|\eta_+|^2-|\eta_-|^2}{4} \right)^k= (-1)^k \left( \frac{|\eta|_{r,s}}{2} \right)^{2k}, 
\end{equation*}
and therefore the power series expansion of the exponential function gives: 
\begin{align*}
e^{t \Omega(\eta) \tau} 
&= \sum_{k=0}^{\infty} \frac{t^{2k}}{(2k)!} \Big{(} \Omega(\eta) \tau \Big{)}^{2k} +
 \Omega(\eta) \tau \sum_{k=0}^{\infty} \frac{t^{2k+1}}{(2k+1)!} \Big{(} \Omega(\eta) \tau \Big{)}^{2k}\\
 &= \sum_{k=0}^{\infty} \frac{(-1)^k}{(2k)!} \left( \frac{t |\eta|_{r,s}}{2} \right)^{2k} +
 \Omega(\eta) \tau \sum_{k=0}^{\infty} \frac{(-1)^k}{(2k+1)!} \left( \frac{t|\eta|_{r,s}}{2} \right)^{2k+1} \frac{2}{|\eta|_{r,s}}. 
\end{align*}
In the case $|\eta_+|^2-|\eta_-|^2<0$ we use 
$$
\big{(} \Omega(\eta) \tau \big{)}^{2k}=(-1)^k \left( \frac{i|\eta|_{r,s}}{2} \right)^{2k},
\quad
\cos(i\alpha)=\cosh\alpha,\quad
\sin(i\alpha)=i\sinh\alpha,\quad\alpha\in\mathbb R
$$
and finish the proof of~\eqref{eq:period1}. 
A similar calculation shows~\eqref{eq:period2}.
\end{proof}

\begin{lemma}
\label{Relation_exp_omega_tau}
Let $\eta \in \mathbb{R}^{r+s}$. Then for all $t \in \mathbb{R}$ one has 
\begin{equation}\label{relation_expontial_Omega_tau_2}
e^{-t \tau \Omega(\eta)}\tau e^{t \Omega(\eta)\tau} = \tau
\quad\text{and}\quad
e^{-t \Omega(\eta)\tau }\tau e^{t \tau\Omega(\eta)} = \tau. 
\end{equation}
In particular, the level sets of $P(\xi)= \sum_{j=1}^n\xi_j^2-\xi_{j+n}^2$ are invariant under the flows  $e^{t\Omega(\eta) \tau}$ and $e^{t\tau\Omega(\eta)}$, i.e., for all $t \in \mathbb{R}$ and $\xi \in \mathbb{R}^{2n}$:
\begin{equation}\label{GL_invariance_property}
P\circ e^{t \Omega(\eta) \tau} \xi = P(\xi)
\quad\text{and}\quad
P\circ e^{t \tau\Omega(\eta)} \xi = P(\xi). 
\end{equation}
\end{lemma}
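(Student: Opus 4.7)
The plan is to reduce the two exponential identities to an elementary intertwining relation at the level of the generators, then to use them together with skew-symmetry of $\Omega(\eta)$ to recover the invariance of the quadratic form $P$.

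First, I would observe the trivial identity $\tau \cdot \Omega(\eta)\tau = \tau\Omega(\eta)\cdot \tau$ (pure associativity), which says that left-multiplication by $\tau$ intertwines the matrices $M := \Omega(\eta)\tau$ and $N := \tau\Omega(\eta)$. Similarly, since $\tau^{2} = I$, one also has $\tau\cdot\tau\Omega(\eta) = \Omega(\eta)\cdot\tau\cdot\tau$, i.e.\ $\tau N = M\tau$. By a straightforward induction these relations upgrade to
\begin{equation*}
\tau M^{k} = N^{k}\tau, \qquad \tau N^{k} = M^{k}\tau, \qquad k\in\mathbb N_{0}.
\end{equation*}
Summing the exponential power series then gives $\tau e^{tM} = e^{tN}\tau$ and $\tau e^{tN} = e^{tM}\tau$, which after multiplication by $e^{-tN}$ (respectively $e^{-tM}$) on the left are exactly the two identities in \eqref{relation_expontial_Omega_tau_2}.

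For the invariance of $P$ under the two flows, I would use the representation $P(\xi) = \langle \tau\xi,\xi\rangle$ with the standard Euclidean inner product, together with the fact that $\Omega(\eta)$ is skew-symmetric and $\tau$ is symmetric, which gives $(\tau\Omega(\eta))^{T} = -\Omega(\eta)\tau$ and hence $\bigl(e^{t\tau\Omega(\eta)}\bigr)^{T} = e^{-t\Omega(\eta)\tau}$. Combining this with the first identity of \eqref{relation_expontial_Omega_tau_2} yields
\begin{equation*}
P\bigl(e^{t\Omega(\eta)\tau}\xi\bigr) = \bigl\langle \tau e^{t\Omega(\eta)\tau}\xi, e^{t\Omega(\eta)\tau}\xi\bigr\rangle = \bigl\langle e^{t\tau\Omega(\eta)}\tau\xi, e^{t\Omega(\eta)\tau}\xi\bigr\rangle = \bigl\langle \tau\xi, e^{-t\Omega(\eta)\tau} e^{t\Omega(\eta)\tau}\xi\bigr\rangle = P(\xi),
\end{equation*}
and the analogous computation with $M$ and $N$ swapped proves invariance under $e^{t\tau\Omega(\eta)}$.

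There is really no serious obstacle here: the content is the observation that conjugation by $\tau$ exchanges $\Omega(\eta)\tau$ and $\tau\Omega(\eta)$, and this propagates to the exponentials by linearity of the power series. The only point one needs to be a little careful about is that for the $P$-invariance statement one uses transpose with respect to the \emph{standard} Euclidean inner product (not $\langle\cdot,\cdot\rangle_{V}$), which is why skew-symmetry of $\Omega(\eta)$ enters directly.
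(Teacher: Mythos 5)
Your proof is correct, and for the exponential identities in \eqref{relation_expontial_Omega_tau_2} it takes a genuinely different and cleaner route than the paper. The paper derives these identities by plugging in the explicit closed forms for $e^{t\Omega(\eta)\tau}$ and $e^{t\tau\Omega(\eta)}$ from Lemma~\ref{Formula_exponentials_Omega} (involving $\cos,\sin$ or $\cosh,\sinh$ depending on the sign of $|\eta_+|^2-|\eta_-|^2$), multiplying them out and invoking $(\tau\Omega(\eta))^2 = -\tfrac{|\eta_+|^2-|\eta_-|^2}{4}I$; this forces a case distinction and depends on earlier structural results. You instead observe the trivial associativity $\tau\,(\Omega(\eta)\tau)=(\tau\Omega(\eta))\,\tau$ and its companion $\tau\,(\tau\Omega(\eta))=(\Omega(\eta)\tau)\,\tau$, propagate these through powers by induction, and sum the exponential series. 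This is self-contained, needs no case split, and makes no use of Lemma~\ref{Formula_exponentials_Omega} at all --- the only input is $\tau^2=I$. For the invariance of $P$, your argument and the paper's are essentially the same: both write $P(\xi)=\langle\tau\xi,\xi\rangle$, move the exponential to the other side of the Euclidean inner product via $(\tau\Omega(\eta))^T=-\Omega(\eta)\tau$, and cancel $e^{-t\Omega(\eta)\tau}e^{t\Omega(\eta)\tau}=I$. Your closing remark about using the Euclidean transpose (not $\langle\cdot,\cdot\rangle_V$) is the right thing to flag, and is implicit in the paper's calculation too.
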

\begin{proof}

Let $\eta =(\eta_+, \eta_-) \in \mathbb{R}^{r+s}$ with $|\eta_+|^2-|\eta_-|^2>0$ as above and $t \in \mathbb{R}$. By using Lemma~ \ref{Formula_exponentials_Omega} we calculate the following product: 
\begin{align}\label{al:flow}
e^{-t \tau \Omega(\eta)}\tau e^{t \Omega(\eta)\tau} &
=\left[ \cos \left( t \frac{|\eta|_{r,s}}{2} \right) \tau -\frac{\sin(t\frac{|\eta|_{r,s}}{2})}{\frac{|\eta|_{r,s}}{2}}\Big{(} \tau \Omega (\eta) \tau \Big{)} \right]\nonumber
\\
&\times
\left[ \cos \left( t \frac{|\eta|_{r,s}}{2}\right) I +\frac{\sin(t \frac{|\eta|_{r,s}}{2})}{\frac{|\eta|_{r,s}}{2}} \Big{(}\Omega(\eta)\tau \Big{)} 
\right]
\\
&= \cos^2(t\frac{|\eta|_{r,s}}{2}) \tau - \frac{\sin^2(t\frac{|\eta|_{r,s}}{2})}{\frac{|\eta|^2_{r,s}}{4}} \Big{(} \tau \Omega(\eta) \Big{)}^2 \tau. \nonumber
\end{align}
We know that 
$
\Big{(} \tau \Omega(\eta) \Big{)}^2 =- \frac{|\eta_+|^2- |\eta_-|^2}{4} I= - \frac{|\eta|^2_{r,s}}{4} I$  from Lemma~\ref{lemma:properties_of_Omega}, (1) . Inserting it to~\eqref{al:flow} we obtain the necessary equality.

If $|\eta_+|^2-|\eta_-|^2<0$ then we calculate
\begin{equation}\label{eq:flow}
e^{-t \tau \Omega(\eta)}\tau e^{t \Omega(\eta)\tau} =
\cosh^2(t\frac{|\eta|_{r,s}}{2}) \tau - \frac{\sinh^2(t\frac{|\eta|_{r,s}}{2})}{\frac{|\eta|^2_{r,s}}{4}} \Big{(} \tau \Omega(\eta) \Big{)}^2 \tau.
\end{equation}
Substituting $\Big{(} \tau \Omega(\eta) \Big{)}^2 =- \frac{|\eta_+|^2- |\eta_-|^2}{4} I=  \frac{|\eta|^2_{r,s}}{4} I$ into~\eqref{eq:flow}, we get the assertion. 
The second equality in~\eqref{relation_expontial_Omega_tau_2} follows analogously.

To show the second statement we write $P(\xi)= \langle \tau \xi, \xi \rangle$ and calculate
\begin{align*}
P\circ e^{t \Omega(\eta) \tau} \xi 
&=\Big{\langle} \tau e^{t \Omega(\eta) \tau } \xi, e^{t \Omega(\eta) \tau } \xi \Big{\rangle}\\
&= \Big{\langle} e^{-t \tau \Omega(\eta)} \tau e^{t \Omega(\eta) \tau }  \xi , \xi \Big{\rangle}= \big{\langle} \tau \xi, \xi  \big{\rangle}=P(\xi)
\end{align*}
for all $\xi \in \mathbb{R}^{2n}$. 
\end{proof}

\begin{lemma}\label{Corollary_geometric_form_B_theta}
Let $\eta \in \mathbb{R}^{r+s} \cong \mathbb{R}^{r,s}$ be fixed. Then the operator $B_{\eta}$ can be expressed in the form 
\begin{equation*}
B_{\eta} \varphi= -2i  \left.\frac{\mathsf{d}}{\mathsf{d}t}\right|_{t=0}  \varphi \left( e^{t \Omega(\eta) \tau} \xi \right) \hspace{4ex} \mbox{\it where} \hspace{4ex} \varphi \in \mathcal{S}(\mathbb{R}^{2n}). 
\end{equation*}
\end{lemma}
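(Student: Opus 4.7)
The statement is a direct application of the chain rule to the linear flow $t \mapsto e^{t\Omega(\eta)\tau}\xi$ on $\mathbb{R}^{2n}$, so no substantial analytic difficulty is expected; the only thing to verify is the bookkeeping of the generator.

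My plan is as follows. Fix $\eta \in \mathbb{R}^{r+s}$ and set $M := \Omega(\eta)\tau \in \mathbb{R}^{2n\times 2n}$. For any $\varphi \in \mathcal{S}(\mathbb{R}^{2n})$ and any $\xi \in \mathbb{R}^{2n}$, the map
\[
 t \longmapsto \varphi\bigl(e^{tM}\xi\bigr)
\]
is smooth, and by the chain rule together with $\tfrac{d}{dt}\big|_{t=0} e^{tM}\xi = M\xi$ I obtain
\[
 \frac{d}{dt}\bigg|_{t=0} \varphi\bigl(e^{tM}\xi\bigr) = \bigl\langle M\xi,\nabla_\xi \varphi(\xi)\bigr\rangle = \bigl\langle \Omega(\eta)\tau \,\xi,\nabla_\xi\varphi(\xi)\bigr\rangle.
\]
Multiplying by $-2i$ and comparing with the definition of $B_\eta$ in \eqref{Definition_A_vartheta} yields the claimed identity. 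No further properties of $\Omega(\eta)$ or $\tau$ are needed at this stage — the linearity of $M$ is what makes the generator of the flow coincide with the coefficient appearing in $B_\eta$.

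The only step that requires any thought is making the chain rule rigorous and recording it cleanly, and this is routine since $\varphi$ is Schwartz (hence $C^\infty$) and the flow $t \mapsto e^{tM}\xi$ is analytic in $t$. There is no real obstacle; in particular, one does not need the explicit periodic/hyperbolic form of $e^{tM}$ from Lemma \ref{Formula_exponentials_Omega} nor the invariance property \eqref{GL_invariance_property}, both of which are useful later but are not required for this first-order identification. The lemma therefore reduces to a single line once the notation $M = \Omega(\eta)\tau$ is introduced.
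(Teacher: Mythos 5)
Your proof is correct and is essentially identical to the paper's own argument: both simply apply the chain rule to the linear flow $t\mapsto e^{t\Omega(\eta)\tau}\xi$, identify $\left.\tfrac{d}{dt}\right|_{t=0}e^{t\Omega(\eta)\tau}\xi=\Omega(\eta)\tau\,\xi$, and compare with the definition of $B_\eta$ in \eqref{Definition_A_vartheta}. No discrepancy to report.
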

\begin{proof}
Let $\varphi \in \mathcal{S}(\mathbb{R}^{2n})$ and $\xi \in \mathbb{R}^{2n}$. Then
\begin{align*}
 \left.\frac{\mathsf{d}}{\mathsf{d}t}\right|_{t=0} \varphi \left( e^{t \Omega(\eta) \tau} \xi \right)
&= [\nabla_{\xi} \varphi]  \cdot  \left.\frac{\mathsf{d}}{\mathsf{d}t}\right|_{t=0} e^{t \Omega(\eta) \tau} \xi \\
&=\big{(} \nabla_{\xi} \varphi \big{)} \Omega(\eta) \tau \xi= \Big{\langle} \Omega(\eta) \tau \xi, \nabla_{\xi} \varphi  \Big{\rangle} = \frac{i}{2}B_{\eta}(\varphi). 
\end{align*}
\end{proof}

For each $t \in \mathbb{R}$, fixed $\eta \in \mathbb{R}^{r+s}$ and $\varphi \in \mathcal{S}(\mathbb{R}^{2n})$ we define two composition operators $C_{\eta,t}^{(j)}$, $j=1,2$, 
on $\mathcal{S}(\mathbb{R}^{2n})$ by: 
$$
C^{(1)}_{\eta,t} \varphi
= \varphi \left(e^{t \tau \Omega(\eta)} \xi\right),
\qquad
C^{(2)}_{\eta,t} \varphi
= \varphi \left(e^{t  \Omega(\eta)\tau} \xi\right). 
$$

\begin{lemma}
\label{lemma_commuting_composition_P}
The operator of multiplication by $P(\xi)$ on $\mathcal{S}(\mathbb{R}^n)$ commutes with both composition operators $C_{\eta, t}^{(j)}$, $j=1,2$. Moreover, 
$C_{\eta, t}^{(2)}$ and $B_{\eta}$ commute.
\end{lemma}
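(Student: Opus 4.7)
The plan is to reduce both commutation statements to facts already established in Lemmas \ref{Relation_exp_omega_tau} and \ref{Corollary_geometric_form_B_theta}, so essentially no new computation is required. Each $C_{\eta,t}^{(j)}$ is a pullback operator, and multiplication by $P$ commutes with a pullback precisely when $P$ is invariant under the corresponding flow; meanwhile $B_\eta$ is the infinitesimal generator of the one-parameter group $\{C_{\eta,t}^{(2)}\}_{t\in\RR}$, so it automatically commutes with every member of that group.

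For the first assertion, I would write, for $\varphi \in \mathcal{S}(\mathbb{R}^{2n})$ and $\xi \in \mathbb{R}^{2n}$,
\begin{equation*}
C_{\eta,t}^{(1)}\big(P\cdot\varphi\big)(\xi)=P\big(e^{t\tau\Omega(\eta)}\xi\big)\,\varphi\big(e^{t\tau\Omega(\eta)}\xi\big),
\end{equation*}
and then invoke the invariance identity $P\circ e^{t\tau\Omega(\eta)}\xi=P(\xi)$ from \eqref{GL_invariance_property} in Lemma \ref{Relation_exp_omega_tau} to pull out $P(\xi)$ and recognize the remaining factor as $C_{\eta,t}^{(1)}\varphi(\xi)$. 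The argument for $C_{\eta,t}^{(2)}$ is identical, using the companion identity $P\circ e^{t\Omega(\eta)\tau}\xi=P(\xi)$.

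For the second assertion, I would first observe that $\{C_{\eta,t}^{(2)}\}_{t\in\RR}$ is a one-parameter group on $\mathcal{S}(\mathbb{R}^{2n})$, since $e^{t\Omega(\eta)\tau}e^{s\Omega(\eta)\tau}=e^{(t+s)\Omega(\eta)\tau}$ immediately gives $C_{\eta,t}^{(2)}C_{\eta,s}^{(2)}=C_{\eta,t+s}^{(2)}$. By Lemma \ref{Corollary_geometric_form_B_theta}, $B_\eta=-2i\,\tfrac{\mathsf{d}}{\mathsf{d}t}\big|_{t=0}C_{\eta,t}^{(2)}$, so $B_\eta$ is the infinitesimal generator of this group. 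Then for any fixed $s\in\RR$,
\begin{equation*}
C_{\eta,s}^{(2)}B_\eta=-2i\left.\frac{\mathsf{d}}{\mathsf{d}t}\right|_{t=0}C_{\eta,s}^{(2)}C_{\eta,t}^{(2)}=-2i\left.\frac{\mathsf{d}}{\mathsf{d}t}\right|_{t=0}C_{\eta,s+t}^{(2)}=-2i\left.\frac{\mathsf{d}}{\mathsf{d}t}\right|_{t=0}C_{\eta,t}^{(2)}C_{\eta,s}^{(2)}=B_\eta C_{\eta,s}^{(2)},
\end{equation*}
where the interchange of the derivative with $C_{\eta,s}^{(2)}$ is routine because $C_{\eta,s}^{(2)}$ is a continuous linear operator on $\mathcal{S}(\mathbb{R}^{2n})$ and the $t$-derivative of $\varphi\circ e^{t\Omega(\eta)\tau}$ is controlled in every Schwartz seminorm.

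There is no real obstacle here; the only point that requires a line of justification is that the $t$-derivative indeed passes through $C_{\eta,s}^{(2)}$, which follows from the fact that composition with the smooth linear map $e^{s\Omega(\eta)\tau}$ is continuous on $\mathcal{S}(\mathbb{R}^{2n})$. I would not dwell on this, since the structural statement (generator of a one-parameter group commutes with every element of the group) is standard.
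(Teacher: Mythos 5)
Your proof is correct and for the first assertion it is exactly the paper's argument, using the invariance $P\circ e^{t\tau\Omega(\eta)}=P$ (resp. $P\circ e^{t\Omega(\eta)\tau}=P$) from Lemma \ref{Relation_exp_omega_tau} to pull $P(\xi)$ out of the pullback. For the second assertion the paper merely says ``combine Lemma \ref{Relation_exp_omega_tau} and Lemma \ref{Corollary_geometric_form_B_theta}'' without detail; your explicit argument---that $\{C_{\eta,t}^{(2)}\}_t$ is a one-parameter group via $e^{t\Omega(\eta)\tau}e^{s\Omega(\eta)\tau}=e^{(t+s)\Omega(\eta)\tau}$, that $B_\eta$ is its generator by Lemma \ref{Corollary_geometric_form_B_theta}, and that a generator commutes with every member of its group---is a correct and arguably cleaner way to supply the missing details, and it does not even need Lemma \ref{Relation_exp_omega_tau} for that part.
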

\begin{proof}
We only treat  $C^{(1)}_{\eta,t}$ and use the invariance property (\ref{GL_invariance_property}) in Lemma \ref{Relation_exp_omega_tau}: 
\begin{align*} 
P C_{\eta,t}^{(1)} \varphi (\xi)
&=P(\xi)  \varphi\left( e^{t \tau \Omega(\eta)}\xi \right)\\
&= P \left( e^{t \tau \Omega(\eta)}\xi \right)  \varphi\left( e^{t \tau \Omega(\eta)}\xi \right)= C_{\eta,t}^{(1)} \big{(}P \varphi \big{)} (\xi). 
\end{align*}
The second statement follows by combining Lemma \ref{Relation_exp_omega_tau} and Lemma~\ref{Corollary_geometric_form_B_theta}. 
\end{proof}
As a consequence of Lemma \ref{lemma_commuting_composition_P} we also have: 
\begin{lemma}
\label{lemma_commuting_composition_L}
Let $\mathcal{F}$ denote the Fourier transform on $\mathcal{S}(\mathbb{R}^{2n})$. Then we have for all $t \in \mathbb{R}$ and $\eta \in \mathbb{R}^{r,s}$: 
\begin{equation}\label{commutation_relation_Fourier_transform}
C^{(2)}_{\eta,t}  \circ \mathcal{F}= \mathcal{F} \circ C^{(1)}_{\eta,t} \hspace{3ex} \mbox{\it and} \hspace{3ex} C^{(1)}_{\eta,t}  \circ \mathcal{F} = \mathcal{F}\circ C^{(2)}_{\eta,t}
\end{equation}
In particular, the ulta-hyperbolic operator $\mathcal{L}$ commutes with $C_{\eta, t}^{(j)}$ for $j=1,2$ and by Lemma \ref{lemma_commuting_composition_P} 
\begin{equation*}
\big{[} A_{\eta}, C_{\eta,t}^{(j)} \big{]}=0 \hspace{4ex} j=1,2. 
\end{equation*}
\end{lemma}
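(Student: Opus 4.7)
The entire lemma reduces to a single algebraic observation: since $\Omega(\eta)$ is skew-symmetric and $\tau$ is diagonal (so $\tau^T = \tau$), we have
\begin{equation*}
(\tau\Omega(\eta))^T = \Omega(\eta)^T\tau = -\Omega(\eta)\tau, \qquad (\Omega(\eta)\tau)^T = -\tau\Omega(\eta),
\end{equation*}
and in particular the matrices $M_1:=e^{t\tau\Omega(\eta)}$ and $M_2:=e^{t\Omega(\eta)\tau}$ satisfy $M_1^{-T} = M_2$ and $M_2^{-T}=M_1$. Moreover $\tau\Omega(\eta)$ is traceless (the diagonal entries of $\Omega(\eta)$ vanish by skew-symmetry), so $\det M_1 = \det M_2 = 1$.

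The first step is to invoke the standard substitution rule for the Fourier transform: for any $M \in GL_{2n}(\mathbb{R})$ and $\varphi \in \mathcal{S}(\mathbb{R}^{2n})$,
\begin{equation*}
\mathcal{F}\bigl(\varphi\circ M\bigr)(\xi) \;=\; |\det M|^{-1}\,(\mathcal{F}\varphi)\bigl(M^{-T}\xi\bigr).
\end{equation*}
Applying this with $M=M_1$ and using $|\det M_1|=1$ and $M_1^{-T}=M_2$ gives $\mathcal{F}\circ C^{(1)}_{\eta,t} = C^{(2)}_{\eta,t}\circ\mathcal{F}$; applying it with $M=M_2$ gives the other identity. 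This establishes (\ref{commutation_relation_Fourier_transform}).

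The second step is to write $\mathcal{L} = -\mathcal{F}^{-1}\circ P(\xi)\circ\mathcal{F}$, which follows from the familiar computation $\mathcal{F}(\partial_j^2\varphi)(\xi) = -\xi_j^2\mathcal{F}\varphi(\xi)$. By Lemma \ref{lemma_commuting_composition_P} the operator of multiplication by $P(\xi)$ commutes with both $C^{(1)}_{\eta,t}$ and $C^{(2)}_{\eta,t}$, and by (\ref{commutation_relation_Fourier_transform}) conjugation by $\mathcal{F}$ interchanges $C^{(1)}_{\eta,t}$ and $C^{(2)}_{\eta,t}$. Combining these facts,
\begin{equation*}
\mathcal{L}\circ C^{(1)}_{\eta,t} \;=\; -\mathcal{F}^{-1}P\,\mathcal{F} C^{(1)}_{\eta,t} \;=\; -\mathcal{F}^{-1}P C^{(2)}_{\eta,t}\mathcal{F} \;=\; -\mathcal{F}^{-1} C^{(2)}_{\eta,t} P\mathcal{F} \;=\; C^{(1)}_{\eta,t}\circ\mathcal{L},
\end{equation*}
and symmetrically for $C^{(2)}_{\eta,t}$. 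Finally, since $A_\eta = -P + \tfrac{|\eta_+|^2-|\eta_-|^2}{4}\mathcal{L}$ is a linear combination of two operators each of which commutes with $C^{(j)}_{\eta,t}$, the commutator $[A_\eta,C^{(j)}_{\eta,t}]$ vanishes.

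The only step that requires genuine thought is the first: one must verify that the two linear flows appearing in $C^{(1)}_{\eta,t}$ and $C^{(2)}_{\eta,t}$ are precisely related by the transpose-inverse operation that governs the Fourier conjugate of a linear change of variables. Once the identity $(\tau\Omega(\eta))^T = -\Omega(\eta)\tau$ and the vanishing of $\operatorname{tr}(\tau\Omega(\eta))$ are in place, the rest is formal.
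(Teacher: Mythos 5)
Your proof is correct and follows essentially the same route as the paper: the Fourier substitution rule applied to the linear flow, the key identity $(\tau\Omega(\eta))^T=-\Omega(\eta)\tau$ making $M_1^{-T}=M_2$, and then conjugating the relation $\mathcal{F}\circ\mathcal{L}=-P\circ\mathcal{F}$ by $\mathcal{F}$ while using Lemma \ref{lemma_commuting_composition_P}. One small improvement worth noting: you justify $\det M_j=1$ via tracelessness of $\tau\Omega(\eta)$, which covers all $\eta\in\mathbb{R}^{r,s}$ at once, whereas the paper cites Lemma \ref{lemma:properties_of_Omega}(2), which as stated only treats $\langle\eta,\eta\rangle_{r,s}>0$; your version is the cleaner way to get the uniform determinant statement.
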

\begin{proof} 
First we calculate the commutation relations (\ref{commutation_relation_Fourier_transform}).  It is sufficient to prove the first formula. 
Let $\varphi \in \mathcal{S}(\mathbb{R}^{2n})$, then: 
\begin{align*}
\mathcal{F} \circ C_{\eta,t}^{(1)} \varphi (\xi)
&=\frac{1}{(2\pi)^n} \int_{\mathbb{R}^{2n}} \varphi \left( e^{t \tau \Omega(\eta)} x\right) e^{-i x \cdot \xi} \; \dd x=(*).
\end{align*}
According to Lemma \ref{lemma:properties_of_Omega}, (2) it follows: 
\begin{equation*}
\left| \det  e^{-t \tau \Omega(\eta)}\right|= \left| \det  e^{-t  \Omega(\eta)\tau}\right| =1
\end{equation*}
and the transformation rule for the integral implies: 
\begin{align*}
(*)&=\frac{1}{(2\pi)^n} \int_{\mathbb{R}^{2n}} \varphi(x)\exp \left\{ 
-i \Big{(} e^{-t \tau \Omega(\eta)} x \Big{)} \cdot \xi  
\right\} \; \dd x\\
&=\frac{1}{(2\pi)^n} \int_{\mathbb{R}^{2n}} \varphi(x)\exp \left\{ 
-i x \cdot \Big{(}e^{t \Omega(\eta) \tau} \xi \Big{)}
\right\}\; \dd x=C_{\eta,t}^{(2)} \circ \mathcal{F} \varphi(\xi). 
\end{align*}
Using (\ref{commutation_relation_Fourier_transform}) we can prove the second statement and we only treat the case $j=1$. Considered as operators on $\mathcal{S}(\mathbb{R}^{2n})$ we have
\begin{equation*}
\mathcal{F} \circ P=- \mathcal{F} \circ \mathcal{L} \hspace{4ex} \mbox{\it and} \hspace{4ex} \mathcal{F} \circ \mathcal{L}= -P \circ \mathcal{F}
\end{equation*}
and from (\ref{commutation_relation_Fourier_transform}) and Lemma \ref{lemma_commuting_composition_P} it follows: 
\begin{align*}
\mathcal{F} \circ \mathcal{L} \circ C_{\eta, t}^{(1)} &= - P \circ \mathcal{F} \circ C_{\eta, t}^{(1)} \\
&=-P \circ C_{\eta, t}^{(2)} \circ \mathcal{F}\\
&=-C_{\eta,t}^{(2)} \circ P \circ \mathcal{F}\\
&=C_{\eta,t}^{(2)} \circ \mathcal{F} \circ \mathcal{L}  = \mathcal{F} \circ C_{\eta, t}^{(1)} \circ \mathcal{L}. 
\end{align*}
\par 
Since $\mathcal{F}$ is bijective on $\mathcal{S}(\mathbb{R}^{2n})$ it follows that $\mathcal{L} \circ C_{\eta, t}^{(1)}=C_{\eta, t}^{(1)} \circ \mathcal{L}$. The case $j=2$ can be treated similarly. 
\end{proof}
\begin{corollary}
\label{Corollary_commutation_mathcal_L_composition}
Let $\eta \in \mathbb{R}^{r,s}$ be fixed. Then $B_{\eta}$ commutes with $P$ and $\mathcal{L}$. In particular, $B_{\eta}$ commutes with $A_{\eta}$ 
in (\ref{Definition_A_vartheta}). 
\end{corollary}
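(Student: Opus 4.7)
The plan is to use the geometric representation of $B_{\eta}$ given in Lemma \ref{Corollary_geometric_form_B_theta} together with the commutation properties of the composition operators $C^{(2)}_{\eta,t}$ established in Lemmas \ref{lemma_commuting_composition_P} and \ref{lemma_commuting_composition_L}. Once $[B_{\eta}, P]=0$ and $[B_{\eta}, \mathcal L]=0$ have been established, the final claim $[A_{\eta}, B_{\eta}]=0$ follows immediately from the definition $A_{\eta} = -P + \tfrac{|\eta_+|^2-|\eta_-|^2}{4}\mathcal L$ and the bilinearity of the commutator.

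For the two commutation relations with $B_{\eta}$ the strategy is identical. I would first fix $\varphi \in \mathcal S(\mathbb R^{2n})$ and rewrite the identity from Lemma \ref{lemma_commuting_composition_P} as
\begin{equation*}
    P\, C^{(2)}_{\eta,t}\varphi \;=\; C^{(2)}_{\eta,t}\bigl(P\varphi\bigr), \qquad t \in \mathbb R.
\end{equation*}
Since the map $t \mapsto C^{(2)}_{\eta,t}\varphi$ is smooth into $\mathcal S(\mathbb R^{2n})$ (its pointwise derivative at $t=0$ is the Schwartz function $\langle \Omega(\eta)\tau\xi, \nabla_\xi \varphi\rangle$, and higher $t$-derivatives are obtained by iteration), I may differentiate both sides at $t=0$. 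The left-hand side gives $P\cdot\frac{d}{dt}\big|_{t=0} C^{(2)}_{\eta,t}\varphi = \tfrac{i}{2} P B_{\eta}\varphi$, while the right-hand side gives $\tfrac{i}{2} B_{\eta}(P\varphi)$, using Lemma \ref{Corollary_geometric_form_B_theta}. Hence $[P, B_{\eta}] = 0$. The same argument applied to the identity $\mathcal L\, C^{(2)}_{\eta,t}\varphi = C^{(2)}_{\eta,t}(\mathcal L\varphi)$ from Lemma \ref{lemma_commuting_composition_L} yields $[\mathcal L, B_{\eta}] = 0$.

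Combining these two commutators with linearity in the definition of $A_{\eta}$ gives $[A_{\eta}, B_{\eta}] = 0$, which is the assertion. The only point that requires a bit of care is the legitimacy of interchanging $\tfrac{d}{dt}\big|_{t=0}$ with the differential operators $P$ and $\mathcal L$; this is routine because these operators are continuous on $\mathcal S(\mathbb R^{2n})$ and $t \mapsto C^{(2)}_{\eta,t}\varphi$ is differentiable in the Schwartz topology. There is no real obstacle: the whole content of the corollary is already encoded in the previous two lemmas, and the step "differentiate the $C^{(2)}_{\eta,t}$-commutation at $t=0$" is the only idea needed.
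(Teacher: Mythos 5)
Your proof is correct and follows essentially the same route as the paper: represent $B_{\eta}$ via the $t$-derivative of $C^{(2)}_{\eta,t}$ from Lemma \ref{Corollary_geometric_form_B_theta}, pass $P$ (resp. $\mathcal L$) through $C^{(2)}_{\eta,t}$ using Lemmas \ref{lemma_commuting_composition_P} and \ref{lemma_commuting_composition_L}, differentiate at $t=0$, and conclude for $A_{\eta}$ by linearity. The extra remark you add about smoothness of $t \mapsto C^{(2)}_{\eta,t}\varphi$ in the Schwartz topology and continuity of $P$, $\mathcal L$ on $\mathcal S(\mathbb R^{2n})$ is a fair justification of interchanging limits that the paper leaves implicit.
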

\begin{proof}
Since $A_{\eta}$ is a linear combination of $P$ and $\mathcal{L}$ it is sufficient to prove the first statement. Lemma~\ref{Corollary_geometric_form_B_theta} and Lemma \ref{lemma_commuting_composition_P} give with $\varphi \in \mathcal{S}(\mathbb{R}^{2n})$: 
\begin{align*}
B_{\eta} \circ P  \varphi= -2i  \left.\frac{\mathsf{d}}{\mathsf{d}t}\right|_{t=0} C_{\eta,t}^{(2)} \Big{(} P \varphi \Big{)}
=-2i P \left.\frac{\mathsf{d}}{\mathsf{d}t}\right|_{t=0} C_{\eta,t}^{(2)} \big{(} \varphi \big{)}=P\circ B_{\eta} \varphi. 
\end{align*}
Therefore we have $[B_{\eta}, P]=0$. Replacing $P$ with $\mathcal{L}$ in the above calculation and applying Lemma \ref{lemma_commuting_composition_L}  shows 
that also the commutator $[B_{\eta}, \mathcal{L}]$ vanishes. 
\end{proof}

Let now $\varphi \in \mathcal{S}(\mathbb{R}^{2n})$ and fix $\eta \in \mathbb{R}^{r,s}$ with $|\eta_+|^2-|\eta_-|^2>0$. Consider the operator 
\begin{equation*}
D_{\eta}: \mathcal{S}(\mathbb{R}^{2n}) \rightarrow \mathcal{S}(\mathbb{R}^{2n}): \big{[}D_{\eta} \varphi\big{]}(\xi):= \int_0^{q_{\eta}} \big{[}C_{\eta,t}^{(2)} \varphi \big{]} (\xi) \; \dd t, 
\end{equation*}
where $q_{\eta}= \frac{4 \pi}{|\eta|_{r,s}}$ is the period of $\mathbb{R} \ni t \mapsto C_{\eta,t}^{(2)}$ in Lemma \ref{Formula_exponentials_Omega}. Applying the transformation 
$t \rightarrow \frac{4 \pi}{|\eta|_{r,s}} \rho$  we can also write 
\begin{equation*}
\big{[}D_{\eta} \varphi\big{]}(\xi):= \frac{4 \pi}{|\eta|_{r,s}} \int_0^1 \varphi \left( \exp \left\{ \frac{4 \pi \rho}{|\eta|_{r,s}} \Omega(\eta) \tau \right\} \xi \right) \; \dd \rho. 
\end{equation*}
\begin{lemma}\label{Lemma_D_vartheta_maps_into_kernel_of_B_vartheta}
With the notation above $D_{\eta}$ maps $\mathcal{S}(\mathbb{R}^{2n})$ into the kernel of $B_{\eta}$. 
\end{lemma}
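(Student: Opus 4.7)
The plan is to use the geometric description of $B_{\eta}$ from Lemma~\ref{Corollary_geometric_form_B_theta}, which expresses it as the infinitesimal generator of the one-parameter flow $C_{\eta,t}^{(2)}$, and to exploit the fact that $D_{\eta}$ is defined by integrating this flow over a full period $q_{\eta} = 4\pi/|\eta|_{r,s}$ (which is legitimate by Lemma~\ref{Formula_exponentials_Omega}, since $|\eta_+|^2 - |\eta_-|^2 > 0$). The expected picture is the standard averaging principle: integrating a representation of $\mathbb{R}/q_{\eta}\mathbb{Z}$ against the Haar measure produces a projection onto the invariants of the flow, and hence onto the kernel of its generator.

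Concretely, I would first observe that $t \mapsto C_{\eta,t}^{(2)}$ is a one-parameter group on $\mathcal{S}(\mathbb{R}^{2n})$, i.e.\ $C_{\eta,s}^{(2)} \circ C_{\eta,t}^{(2)} = C_{\eta,s+t}^{(2)}$, because the matrices $e^{s\Omega(\eta)\tau}$ and $e^{t\Omega(\eta)\tau}$ commute and multiply additively in the exponent. Combined with Lemma~\ref{Corollary_geometric_form_B_theta}, for $\varphi \in \mathcal{S}(\mathbb{R}^{2n})$ I may write
\begin{equation*}
B_{\eta}(D_{\eta}\varphi)(\xi) = -2i \left.\frac{\mathrm d}{\mathrm d s}\right|_{s=0} \big[C_{\eta,s}^{(2)} D_{\eta}\varphi\big](\xi) = -2i \left.\frac{\mathrm d}{\mathrm d s}\right|_{s=0} \int_0^{q_{\eta}} \varphi\bigl(e^{(t+s)\Omega(\eta)\tau}\xi\bigr) \, \mathrm{d}t.
\end{equation*}

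Next I would perform the change of variables $u = t + s$ in the inner integral to get $\int_s^{q_{\eta}+s} \varphi(e^{u\Omega(\eta)\tau}\xi) \, \mathrm{d}u$, and invoke the periodicity statement from Lemma~\ref{Formula_exponentials_Omega}, which gives $e^{(q_{\eta}+s)\Omega(\eta)\tau} = e^{s\Omega(\eta)\tau}$. Hence the integrand, as a function of $u$, is $q_{\eta}$-periodic, so the integral over any interval of length $q_{\eta}$ is independent of $s$. Differentiating in $s$ at $s=0$ therefore yields $0$, which is the claim.

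The only step that requires any genuine attention is justifying differentiation under the integral sign, but this is routine: the integrand is a smooth function of $(t,s,\xi)$ with $t$ ranging over the compact interval $[0, q_{\eta}]$ and $\varphi \in \mathcal{S}(\mathbb{R}^{2n})$, so all derivatives are uniformly bounded on compact $\xi$-sets. I do not expect a substantive obstacle here; the result is morally just the statement that $\int_{\mathbb{R}/q_{\eta}\mathbb{Z}} e^{tX} \, \mathrm{d}t$ annihilates $X$ for any generator $X$ of a periodic flow.
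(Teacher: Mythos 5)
Your argument is correct and is essentially the paper's own proof: both apply Lemma \ref{Corollary_geometric_form_B_theta} to write $B_\eta(D_\eta\varphi)$ as a $t$-derivative at $t=0$ of an integral over a full period of the flow, then use periodicity to show the integral is independent of the shift, so the derivative vanishes. Your added remarks (the one-parameter group property of $C^{(2)}_{\eta,t}$, the change of variables, differentiation under the integral) are exactly the routine justifications the paper leaves implicit.
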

\begin{proof}
Let $\varphi \in \mathcal{S}(\mathbb{R}^{2n})$. Then we have by Corollary \ref{Corollary_geometric_form_B_theta}: 
\begin{align*}
B_{\eta} \circ D_{\eta} \varphi (\xi)
&=-2i  \left.\frac{\mathsf{d}}{\mathsf{d}\rho}\right|_{\rho=0}\int_0^{q_{\eta}} C_{\eta, t}^{(2)} \varphi \left( e^{\rho \Omega(\eta) \tau } \xi \right) \; \dd t\\
&=-2i  \left.\frac{\mathsf{d}}{\mathsf{d}\rho}\right|_{\rho=0}\int_0^{q_{\eta}} \varphi \left( e^{(\rho+t) \Omega(\eta) \tau}\xi \right) \; \dd t.
\end{align*}
Since we integrate a periodic function on $\mathbb{R}$ over a full period we have: 
\begin{equation*}
\int_0^{q_{\eta}} \varphi \left( e^{(\rho+t) \Omega(\eta) \tau}\xi \right) \; \dd t=\int_0^{q_{\eta}} \varphi \left( e^{t\Omega(\eta) \tau}\xi \right) \; \dd t
\end{equation*}
and therefore the integrand does not depend on $\rho$. Hence it follows $B_{\eta} \circ D_{\eta} \varphi \equiv 0$ as it was claimed. 
\end{proof}
\begin{lemma}\label{Lemma_D_vartheta_maps_kernel of_A_vartheta_into_kernel_of_B_vartheta}
Let $\eta \in \mathbb{R}^{r,s}$ be fixed with $|\eta_+|^2-|\eta_-|^2 >0$ and choose $\varphi \in \mathcal{S}(\mathbb{R}^{2n})$ in the kernel of $A_{\eta}$. Then 
\begin{equation*}
A_{\eta} \circ D_{\eta} \varphi =0. 
\end{equation*}
Moreover, $D_{\eta}$ maps $\textup{ker} A_{\eta} \subset \mathcal{S}(\mathbb{R}^{2n})$ into $\textup{ker}A_{\eta} \cap \textup{ker} B_{\eta} \subset \mathcal{S}(\mathbb{R}^{2n})$. 
\end{lemma}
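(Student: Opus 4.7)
The proof is essentially an immediate consequence of the commutation relations that have already been established, so I would organize the argument as follows.

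First I would write $D_{\eta}\varphi$ in its integral form and pull the differential operator $A_{\eta}$ inside the integral. This interchange is justified because $A_{\eta}$ is a differential operator in the variable $\xi$ while the integration is in the independent parameter $t$, and because the integrand $(t,\xi)\mapsto [C^{(2)}_{\eta,t}\varphi](\xi)$ together with all its $\xi$-derivatives is continuous in $t$ on the compact interval $[0,q_{\eta}]$ (in fact smooth, since the flow $t\mapsto e^{t\Omega(\eta)\tau}$ is smooth and $\varphi\in\mathcal S(\RR^{2n})$). Thus
\begin{equation*}
    A_{\eta}\,D_{\eta}\varphi = \int_0^{q_{\eta}} A_{\eta}\,C^{(2)}_{\eta,t}\varphi \; \dd t.
\end{equation*}

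Next I would apply Lemma \ref{lemma_commuting_composition_L}, which asserts $[A_{\eta},C^{(2)}_{\eta,t}]=0$ for every $t\in\RR$. This yields
\begin{equation*}
    A_{\eta}\,D_{\eta}\varphi = \int_0^{q_{\eta}} C^{(2)}_{\eta,t}\,A_{\eta}\varphi \; \dd t,
\end{equation*}
and since $\varphi\in\ker A_{\eta}$ the integrand vanishes identically, giving $A_{\eta}\,D_{\eta}\varphi=0$.

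For the second assertion I would simply combine this identity with Lemma \ref{Lemma_D_vartheta_maps_into_kernel_of_B_vartheta}, which was proved without any hypothesis on $\varphi$ and shows $B_{\eta}\,D_{\eta}\varphi\equiv 0$. Together these give $D_{\eta}\varphi\in\ker A_{\eta}\cap\ker B_{\eta}$, as required. There is no genuine obstacle here; the only point deserving care is the justification of moving $A_{\eta}$ past the $t$-integral, and that is handled by the smoothness of the periodic flow and the Schwartz class membership of $\varphi$ (which also ensures $D_{\eta}\varphi\in\mathcal S(\RR^{2n})$, since the flow is an isometry of $\RR^{2n}$ of bounded operator norm on the compact interval $[0,q_{\eta}]$).
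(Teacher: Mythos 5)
Your proof is correct and follows essentially the same route as the paper: move $A_{\eta}$ past the $t$-integral, apply the commutation relation $[A_{\eta},C^{(2)}_{\eta,t}]=0$ from Lemma \ref{lemma_commuting_composition_L} so the integrand becomes $C^{(2)}_{\eta,t}(A_{\eta}\varphi)\equiv 0$, and then invoke Lemma \ref{Lemma_D_vartheta_maps_into_kernel_of_B_vartheta} for the $\ker B_{\eta}$ part. Your extra remark justifying the interchange of $A_{\eta}$ with the integral is a welcome (and correct) clarification that the paper leaves implicit.
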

\begin{proof}
According to Lemma \ref{Lemma_D_vartheta_maps_into_kernel_of_B_vartheta} it is sufficient to prove the first statement. Let $\varphi \in \textup{ker} A_{\eta} \subset \mathcal{S}(\mathbb{R}^{2n})$. 
Since $A_{\eta}$ commutes with $C_{\eta,t}^{(2)}$ according to Lemma \ref{lemma_commuting_composition_L} we have: 
\begin{equation*}
A_{\eta} \circ D_{\eta} \varphi= \int_0^{q_{\eta}} \big{[}A_{\eta} \circ C_{\eta,t}^{(2)} \varphi\big{]}(\xi) \; \dd t
= \int_0^{q_{\eta}} C_{\eta,t}^{(2)} \Big{(}\underbrace{ A_{\eta} \varphi}_{=0} \Big{)}(\xi)\; \dd t\equiv 0. 
\end{equation*}
This proves the assertion. 
\end{proof}
In particular, we may choose the function $\varphi_{\eta} \in \textup{ker} A_{\eta}$ in (\ref{Definition_varphi_theta}): 
\begin{equation*}
\varphi_{\eta}(\xi)= \exp \Big{(} -c_{\eta} |\xi|^2 \Big{)} \in \mathcal{S}(\mathbb{R}^{2n}) \hspace{3ex} \mbox{\it where} \hspace{3ex} c_{\eta}=\frac{1}{\sqrt{|\eta_+|^2-|\eta_-|^2}}. 
\end{equation*}
Since $\varphi_{\eta}$ only has positive values, the same holds for the function $D_{\eta} \varphi_{\eta}$ and, in particular, it is non-zero. 
\vspace{1ex}\par 
For the moment we consider the operator $\mathcal{G}_{r,s}=A_{\eta}+B_{\eta}$ in Lemma \ref{Lemma_short_form_of_L} for fixed $\eta \in \mathbb{R}^{r,s}$ with $|\eta_+|^2-|\eta_-|^2>0$ 
as an operator on $\mathcal{S}(\mathbb{R}^{2n})$.  Lemma \ref{Lemma_D_vartheta_maps_kernel of_A_vartheta_into_kernel_of_B_vartheta} implies: 
\begin{equation*}
0 \ne D_{\eta} \varphi_{\eta} \in  \textup{ker} A_{\eta} \cap \textup{ker} B_{\eta}  \subset \textup{ker} \big{(}A_{\eta} +B_{\eta}\big{)}= \textup{ker} \: \mathcal{G}_{r,s}\subset 
 \mathcal{S}(\mathbb{R}^{2n}). 
\end{equation*}
Now we consider $\mathcal{G}_{r,s}$ again as an operator on $\mathcal{S}(\mathbb{R}^{2n+r+s})$ as in Lemma  \ref{Lemma_short_form_of_L}. We assume that $r>0$ such that 
\begin{equation*}
\mathcal{K}= \Big{\{} \eta \in \mathbb{R}^{r+s} \: : \: |\eta_+|^2-|\eta_-|^2 >0 \Big{\}} 
\end{equation*}
is a non-empty open subset in $\mathbb{R}^{r+s}$. Choose a compactly supported cut-off function $0 \ne \omega \in C_0^{\infty}(\mathbb{R}^{r+s})$ with 
\begin{equation*}
\textup{supp}\: \omega \subset \mathcal{K}, 
\end{equation*}
and taking values in the interval $[0,1]$. Consider the function: 
\begin{equation*}
\psi(\xi, \eta):=
\omega(\eta)\big{[} D_{\eta} \varphi_{\eta}\big{]} (\xi) \in \mathcal{S}(\mathbb{R}^{2n+r+s}). 
\end{equation*}
\par 
Then by construction $\psi$ only takes values in $\mathbb{R}_+=(0,\infty)$ and 
\begin{equation*}
\mathcal{G}_{r,s} \psi= \omega(\eta) \Big{(} A_{\eta}+B_{\eta} \Big{)} D_{\eta} \varphi_{\eta} \equiv 0. 
\end{equation*}
Hence we have shown: 
\begin{corollary}\label{Kor_function_in_the_kernel_of_L}
Let $r>0$, then there is a non-trivial, non-negative valued function $\psi \in \mathcal{S}(\mathbb{R}^{2n+r+s})$ in the kernel of the operator $\mathcal{G}_{r,s}$.
\end{corollary}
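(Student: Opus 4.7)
The construction is essentially laid out already in the discussion preceding the corollary, so the proof plan is to carefully assemble the pieces and verify that the result lies in Schwartz space.

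First I would note that the hypothesis $r>0$ is used precisely to guarantee that the open cone
\[
\mathcal{K} = \bigl\{ \eta = (\eta_+, \eta_-) \in \mathbb{R}^r \times \mathbb{R}^s : |\eta_+|^2 - |\eta_-|^2 > 0 \bigr\}
\]
is non-empty; without $r>0$ this set is empty and the whole construction collapses. I would then fix a cut-off $\omega \in C_0^{\infty}(\mathbb{R}^{r+s})$ taking values in $[0,1]$, non-zero and compactly supported in $\mathcal{K}$, and define
\[
\psi(\xi, \eta) := \omega(\eta) \bigl[ D_\eta \varphi_\eta \bigr](\xi), \qquad (\xi, \eta) \in \mathbb{R}^{2n+r+s}.
\]

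The core algebraic fact is already isolated: for each $\eta \in \mathrm{supp}(\omega) \subset \mathcal{K}$ one has $|\eta_+|^2 - |\eta_-|^2 > 0$, so Lemma \ref{Lemma_D_vartheta_maps_kernel of_A_vartheta_into_kernel_of_B_vartheta} yields $D_\eta \varphi_\eta \in \ker A_\eta \cap \ker B_\eta$ and therefore in $\ker(A_\eta + B_\eta) = \ker \mathcal{G}_{r,s}$ pointwise in $\eta$. Since $\mathcal{G}_{r,s}$ in the form of Corollary \ref{cor:explicit_form_G_r_s} involves only differentiations in $\xi$ and multiplication by functions of $(\xi, \eta)$ (the cut-off $\omega$ is independent of $\xi$), multiplication by $\omega(\eta)$ commutes with $\mathcal{G}_{r,s}$ and we get $\mathcal{G}_{r,s} \psi \equiv 0$. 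Non-negativity and non-triviality follow because $\varphi_\eta > 0$ pointwise, so the averaged integral $D_\eta \varphi_\eta$ is strictly positive, and $\omega \not\equiv 0$ is non-negative.

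The main obstacle I anticipate is verifying $\psi \in \mathcal{S}(\mathbb{R}^{2n+r+s})$. Decay in $\eta$ is trivial thanks to the compact support of $\omega$, so all work concentrates on showing joint smoothness and Schwartz decay in $\xi$. Writing
\[
\bigl[ D_\eta \varphi_\eta \bigr](\xi) = \frac{4\pi}{|\eta|_{r,s}} \int_0^1 \exp\Bigl\{ -c_\eta \bigl| \exp\bigl( \tfrac{4\pi\rho}{|\eta|_{r,s}} \Omega(\eta)\tau \bigr) \xi \bigr|^2 \Bigr\} \, \dd\rho,
\]
I would use that on the compact set $\mathrm{supp}(\omega) \subset \mathcal{K}$ the quantities $|\eta|_{r,s}$ and $c_\eta = |\eta|_{r,s}^{-1}$ are bounded and bounded away from zero, and that the flow matrices $\exp(t\Omega(\eta)\tau)$ satisfy $P(e^{t\Omega(\eta)\tau}\xi) = P(\xi)$ (Lemma \ref{Relation_exp_omega_tau}) but, crucially, the Euclidean norm of $e^{t\Omega(\eta)\tau}\xi$ can be estimated by $C_\omega |\xi|$ uniformly in $\eta \in \mathrm{supp}(\omega)$ and $\rho \in [0,1]$, since $\exp(t\Omega(\eta)\tau)$ is a continuous function of $(t,\eta)$ on a compact set. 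Hence the integrand is pointwise dominated by $\exp(-c_0 |\xi|^2/C_\omega^2)$ for some uniform $c_0 > 0$, and the same bound applies after differentiating in $\xi$ or $\eta$ under the integral any number of times (the differentiations bring down polynomial factors in $\xi$ and smooth functions of $\eta$). Combining these pointwise estimates with the compact support of $\omega$ gives Schwartz decay and smoothness, completing the proof.
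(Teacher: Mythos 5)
Your proposal is correct and follows essentially the same construction as the paper: fix a cut-off $\omega$ supported in the open cone $\mathcal{K}$ (non-empty exactly when $r>0$), set $\psi(\xi,\eta)=\omega(\eta)[D_\eta\varphi_\eta](\xi)$, and invoke Lemma \ref{Lemma_D_vartheta_maps_kernel of_A_vartheta_into_kernel_of_B_vartheta} to see $\mathcal{G}_{r,s}\psi\equiv 0$ since $\mathcal{G}_{r,s}$ contains no $\eta$-derivatives. The additional verification you supply that $\psi\in\mathcal{S}(\mathbb{R}^{2n+r+s})$ — using compactness of $\mathrm{supp}(\omega)\subset\mathcal{K}$ to get uniform two-sided bounds on $|e^{t\Omega(\eta)\tau}\xi|$ in terms of $|\xi|$ and hence uniform Gaussian decay — is a genuine detail that the paper leaves implicit, and it is correct.
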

Combining the previous observations, we can now prove the main result: 
\begin{theorem}\label{theorem_existence_of_fundamental_solution_r>0}
Let $r>0$, then the ultra-hyperbolic operator $\Delta_{r,s}$ does not have a fundamental solution in $\mathcal{S}^{\prime}(\mathbb{R}^{2n+r+s})$, i.e., there is no tempered distribution 
$K_{r,s} \in \mathcal{S}^{\prime}(\mathbb{R}^{2n+r+s})$ such that 
\begin{equation*}
\Delta_{r,s} K_{r,s}= \delta_0. 
\end{equation*}
\end{theorem}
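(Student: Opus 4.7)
The plan is to argue by contradiction and reduce the statement to a direct consequence of Corollary \ref{Kor_function_in_the_kernel_of_L}. Suppose that some $K_{r,s}\in\mathcal{S}'(\mathbb{R}^{2n+r+s})$ satisfies $\Delta_{r,s}K_{r,s}=\delta_0$. Since $\Delta_{r,s}$ is formally self-adjoint on $L^2(\mathbb{R}^{2n+r+s})$ by Corollary \ref{cor:explicit_form_G_r_s}, this equation, when paired against test functions $\varphi\in\mathcal{S}(\mathbb{R}^{2n+r+s})$, reads
\begin{equation*}
    K_{r,s}\big{(}\Delta_{r,s}\varphi\big{)} = \varphi(0) \qquad \text{\it for all } \varphi\in\mathcal{S}(\mathbb{R}^{2n+r+s}) .
\end{equation*}
I would therefore aim to exhibit a single test function $\varphi$ for which the left-hand side vanishes identically while the right-hand side is strictly positive.

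The candidate for $\varphi$ comes from the function $\psi\in\mathcal{S}(\mathbb{R}^{2n+r+s})$ supplied by Corollary \ref{Kor_function_in_the_kernel_of_L}: for $r>0$ it is nonnegative, not identically zero, and satisfies $\mathcal{G}_{r,s}\psi\equiv 0$. Setting $\varphi := \mathcal{F}^{-1}\psi \in \mathcal{S}(\mathbb{R}^{2n+r+s})$ and using the intertwining relation $\mathcal{F}\circ\Delta_{r,s}=\mathcal{G}_{r,s}\circ\mathcal{F}$ from (\ref{eq:definition_operator_G_r_s}), one obtains
\begin{equation*}
    \Delta_{r,s}\varphi \;=\; \mathcal{F}^{-1}\big{(}\mathcal{G}_{r,s}\mathcal{F}\varphi\big{)} \;=\; \mathcal{F}^{-1}\big{(}\mathcal{G}_{r,s}\psi\big{)} \;=\; 0 ,
\end{equation*}
so that $K_{r,s}(\Delta_{r,s}\varphi)=K_{r,s}(0)=0$. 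On the other hand, the Fourier inversion formula gives
\begin{equation*}
    \varphi(0) \;=\; \big{(}\mathcal{F}^{-1}\psi\big{)}(0) \;=\; \frac{1}{(2\pi)^{n+(r+s)/2}}\int_{\mathbb{R}^{2n+r+s}} \psi(\xi,\eta)\, \dd(\xi,\eta) \;>\; 0 ,
\end{equation*}
where strict positivity is forced by the fact that $\psi\geq 0$ and $\psi\not\equiv 0$. Combining the two computations yields $0=\varphi(0)>0$, which is the desired contradiction.

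With Corollary \ref{Kor_function_in_the_kernel_of_L} in hand, the argument itself is essentially a one-line functional-analytic observation; the genuine work has been carried out in the preceding lemmas. The main obstacle, already resolved there, was to exhibit a \emph{strictly positive} (not just nonzero) Schwartz function in $\ker\mathcal{G}_{r,s}$, because mere nonvanishing of $\psi$ would not rule out $\int\psi=0$. This is precisely the role of the Gaussians $\varphi_\eta\in\ker A_\eta$ combined with the averaging operators $D_\eta$ over a full period of the flow $e^{t\Omega(\eta)\tau}$: averaging a positive function along a compact orbit preserves positivity and lands in $\ker B_\eta$, while the cut-off $\omega$ concentrates the construction on the open set $\{|\eta_+|^2>|\eta_-|^2\}$, which is nonempty exactly when $r>0$. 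It is this positivity, together with the hypothesis $r>0$, that closes the argument.
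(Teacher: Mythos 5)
Your proof is correct and follows the paper's argument essentially verbatim: assume a fundamental solution exists, apply it to $\varphi=\mathcal{F}^{-1}\psi$ with $\psi\geq 0$, $\psi\not\equiv 0$, $\mathcal{G}_{r,s}\psi=0$ from Corollary \ref{Kor_function_in_the_kernel_of_L}, and derive the contradiction $0=\varphi(0)>0$ via the intertwining $\mathcal{F}\circ\Delta_{r,s}=\mathcal{G}_{r,s}\circ\mathcal{F}$. The only small slip is calling $\psi$ ``strictly positive'' in the concluding remark, whereas the cutoff $\omega$ forces it to vanish outside a compact $\eta$-set; non-negativity plus $\psi\not\equiv 0$ is what is actually used and suffices, as you correctly note earlier.
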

\begin{proof}
Let $r>0$ and assume that the ultra-hyperbolic operator $\Delta_{r,s}$ admits a fundamental solution $K_{r,s} \in \mathcal{S}^{\prime}(\mathbb{R}^{2n+r+s})$. Then 
\begin{equation*}
\Delta_{r,s} K_{r,s}(\psi)= K_{r,s} \Big{(} \Delta_{r,s} \psi \Big{)}= \delta_0 \psi= \psi(0) \hspace{4ex} \mbox{\it for all} \hspace{4ex} \psi \in \mathcal{S}(\mathbb{R}^{2n+r+s}). 
\end{equation*}
Choose a non-trivial, non-negative valued function $\psi \in \mathcal{S}(\mathbb{R}^{2n+r+s})$ in the kernel of $\mathcal{G}_{r,s}$ according to Corollary \ref{Kor_function_in_the_kernel_of_L}. 
With the Fourier transform $\mathcal{F}$ on $\mathcal{S}(\mathbb{R}^{2n+r+s})$ we have the relation 
$$\Delta_{r,s}\circ \mathcal{F}^{-1}= \mathcal{F}^{-1} \circ \mathcal{G}_{r,s}$$ 
Hence it follows: 
\begin{align*}
K_{r,s} \big{(} \mathcal{F}^{-1} \circ \mathcal{G}_{r,s} \psi \big{)}
&= K_{r,s} \big{(} \Delta_{r,s} \circ \mathcal{F}^{-1} \psi \big{)}\\
&= [\mathcal{F}^{-1} \psi \big{]}(0)=   \frac{1}{(2\pi)^{n+ \frac{r+s}{2}}} \int_{\mathbb{R}^{2n+r+s}} \psi(\xi,z) \; \dd \xi \; \dd z >0. 
\end{align*}
On the other hand, since $\mathcal{G}_{r,s}\psi=0$ we have $K_{r,s}(\mathcal{F}^{-1} \circ \mathcal{G}_{r,s} \psi )=0$ which leads to a contradiction. 
\end{proof}
\section{On the local solvability of $\Delta_{r,s}$}
\label{Section_Local_solvability}
Recall that a left-invariant differential operator $L$ on $G_{r,s}$ is called {\it locally solvable} at $x_0 \in G_{r,s}$ if one can find an open neighborhood $U$ of $x_0$ such that 
\begin{equation*}
LC^{\infty}(U) \supset C_0^{\infty}(U). 
\end{equation*}
As usual $C_0^{\infty}(U)$ denotes the space of compactly supported smooth functions on $U$. From the left-invariance of $L$ it follows that the local solvability 
of $L$ at a fixed point $x_0$ is equivalent to the local solvability of $L$ at any point in $G_{r,s}$. Hence we may use the term {\it local solvability} without specifying the point. 
\begin{lemma}\label{Lemma_preparation_local_solvability}
Let $r>0$, then there is a function $\varphi \in \mathcal{S}(\mathbb{R}^{2n+r+s})$ which lies in the kernel of $\Delta_{r,s}$ and fulfills $\varphi(0)=1$. 
\end{lemma}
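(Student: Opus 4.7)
The plan is to recycle the Schwartz function $\psi \in \ker \mathcal{G}_{r,s}$ that was constructed in Corollary~\ref{Kor_function_in_the_kernel_of_L} and transport it back through the Fourier transform to obtain a kernel element for $\Delta_{r,s}$ itself. Since $\mathcal{F}$ is an automorphism of $\mathcal{S}(\mathbb{R}^{2n+r+s})$ and $\mathcal{F} \circ \Delta_{r,s} = \mathcal{G}_{r,s} \circ \mathcal{F}$ (equivalently $\Delta_{r,s} \circ \mathcal{F}^{-1} = \mathcal{F}^{-1} \circ \mathcal{G}_{r,s}$), setting $\varphi := c\, \mathcal{F}^{-1}\psi$ automatically yields $\varphi \in \mathcal{S}(\mathbb{R}^{2n+r+s})$ and $\Delta_{r,s}\varphi = c\, \mathcal{F}^{-1}(\mathcal{G}_{r,s}\psi) = 0$.

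The only thing left to verify is that the constant $c$ can be chosen so that $\varphi(0)=1$. Here we use the crucial positivity built into the construction in Section~\ref{s_r_>_0}: the function $\psi(\xi,\eta) = \omega(\eta)\,[D_{\eta}\varphi_{\eta}](\xi)$ is a non-trivial, pointwise non-negative element of $\mathcal{S}(\mathbb{R}^{2n+r+s})$, because $\omega \geq 0$ is a nonzero cut-off and $[D_\eta \varphi_\eta](\xi)$ is an integral of strictly positive Gaussians along a compact orbit. Consequently
\begin{equation*}
    [\mathcal{F}^{-1}\psi](0) = \frac{1}{(2\pi)^{n+(r+s)/2}} \int_{\mathbb{R}^{2n+r+s}} \psi(\xi,\eta)\, \dd \xi\, \dd \eta \; > \; 0 ,
\end{equation*}
so the normalization $c := [\mathcal{F}^{-1}\psi](0)^{-1}$ is well-defined and gives $\varphi(0) = 1$.

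This is essentially a bookkeeping consequence of the work already done, and there is no substantial obstacle. The conceptual input — producing a nontrivial element of $\ker \mathcal{G}_{r,s}$ with strictly positive integral — was the content of Corollary~\ref{Kor_function_in_the_kernel_of_L} and relied on the existence of the open cone $\mathcal{K} = \{|\eta_+|^2 > |\eta_-|^2\}$, which requires $r>0$. The present lemma simply transfers that statement from the ``frequency side'' (the operator $\mathcal{G}_{r,s}$) back to the ``space side'' (the operator $\Delta_{r,s}$) via the Fourier transform and a positivity-based normalization.
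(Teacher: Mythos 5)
Your proof is correct and matches the paper's proof essentially verbatim: you take the non-negative, non-trivial $\psi \in \ker\mathcal{G}_{r,s}$ from Corollary~\ref{Kor_function_in_the_kernel_of_L}, pull it back through $\mathcal{F}^{-1}$ using the intertwining relation $\Delta_{r,s}\circ\mathcal{F}^{-1}=\mathcal{F}^{-1}\circ\mathcal{G}_{r,s}$, and normalize by the strictly positive value $[\mathcal{F}^{-1}\psi](0)$. No discrepancies.
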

\begin{proof}
Let $\psi\in \mathcal{S}(\mathbb{R}^{2n+r+s})$ be the function in Corollary \ref{Kor_function_in_the_kernel_of_L} and put $\varphi_0= \mathcal{F}^{-1} \psi$. Then 
\begin{equation*}
c:= \varphi_0(0)= \frac{1}{(2 \pi)^{n+ \frac{r+s}{2}}} \int_{\mathbb{R}^{2n+r+s}} \psi(\xi, z) \; \dd \xi \; \dd z>0.
\end{equation*}
If we put $\varphi= c^{-1} \varphi_0$, then $\varphi(0)=1$ and $$\Delta_{r,s}\varphi= c^{-1} \Delta_{r,s} \circ \mathcal{F}^{-1}\psi=c^{-1} \mathcal{F}^{-1}\circ  \mathcal{G}_{r,s}\psi=0$$
since $\psi$ is in the kernel of $\mathcal{G}_{r,s}$.  
\end{proof}
Recall that $G_{r,s}$ is a {\it homogeneous Lie group}, i.e., there is a family $\{ \delta_{\rho}\}_{\rho >0}$ of dilations which at the same time are automorphisms. In fact, with respect to 
the coordinates defined in Section  \ref{Pseudo_H_type_groups} and $\rho >0$, we put 
\begin{equation*}
\delta_{\rho}: G_{r,s} \cong \mathbb{R}^{2n+r+s} \rightarrow G_{r,s}: \delta_{\rho}(x,z):=\big{(}\rho x, \rho^2 z\big{)}. 
\end{equation*}
Then the product formula in (\ref{GL_product_Lie_group}) shows that 
\begin{align*}
\delta_{\rho}\Big{(} (x,z)\:  *\:  (y,w) \Big{)}
&= \Big{(} \rho x+ \rho y, \rho^2 z+ \rho^2 w + \sum_{k=1}^{r+s}  \big{\langle} \Omega_k^T \rho x, \rho y \big{\rangle} {\bf e}_k  \Big{)}\\
&=\delta_{\rho} (x,z)  \: * \: \delta_{\rho}(y,w). 
\end{align*}
From the explicit form of the vector fields $X_j$ in (\ref{eq:vector_fields_X_j_Z_k}) one immediately checks that $\Delta_{r,s}$ is homogeneous of degree $2$ with respect to $\delta_{\rho}$, 
i.e., $\delta_{\rho}^*\Delta_{r,s}= \rho^2 \Delta_{r,s}$ for all $\rho >0$. 
\vspace{1ex}\par 
In this setting Lemma \ref{Lemma_preparation_local_solvability}  can be used to prove that $\Delta_{r,s}$ is locally solvable if and only if $r=0$. The arguments are based on the non-injectivity of 
the ultra-hyperbolic operator $\Delta_{r,s}$ on Schwartz functions in case of $r>0$, see \cite{CR}. We may also use a more refined criterion on {\it local non-solvability} of homogeneous left-invariant 
differential operators which is due to D. M\"{u}ller  and can be found in \cite{Mueller}:
\begin{theorem}[D. M\"{u}ller, \cite{Mueller}]
\label{theorem_Mueller_non_local_solvability}
Let $L$ be a left-invariant homogeneous differential operator on a homogeneous, simply connected nilpotent Lie group $G$ with transpose $L^{\tau}$. Assume there exists a sequence 
$\{\psi_j\}_{j=1}^{\infty}$ of Schwartz functions on $G$ with \textup{(i)} and \textup{(ii)}:  
\begin{itemize}
\item[\textup{(i)}] $\psi_j(0)=1$ for every $j$, 
\item[\textup{(ii)}] For every continuous semi-norm $\| \cdot \|_{(N)}$ on the Schwartz space $\mathcal{S}(G)$ it holds: 
\begin{equation*}
\lim_{j \rightarrow \infty} \| \psi_j \|_{(N)}  \| L^{\tau} \psi_j \|_{(N)}=0.
\end{equation*}
\end{itemize}
Then $L$ is not locally solvable. 
\end{theorem}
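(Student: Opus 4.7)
The plan is to argue the contrapositive: suppose $L$ is locally solvable at the identity $e\in G$ on some neighbourhood $U\subset G$, and derive an a priori estimate that contradicts condition (ii). Local solvability means $LC^\infty(U)\supset C_0^\infty(U)$. Applying the closed graph theorem to the solution operator $f\mapsto[u_f]$ on the Fr\'echet quotient $C^\infty(U)/\ker(L|_{C^\infty(U)})$ produces a continuous right inverse; equivalently, for every continuous seminorm $p$ on the quotient there is a continuous seminorm $q$ on $C_0^\infty(U)$ with $p([u_f])\leq q(f)$. Dualising through the identity $\langle f,\chi\rangle=\langle Lu_f,\chi\rangle=\langle u_f,L^\tau\chi\rangle$ for $\chi\in C_0^\infty(U)$, and specialising $f$ to a bump concentrated at $e$, I would obtain an estimate of the form
\[
|\chi(e)|\leq C\,\|\chi\|_{(N_0)}\,\|L^\tau\chi\|_{(N_1)},\qquad \chi\in C_0^\infty(U),
\]
for suitable $C>0$ and seminorm indices $N_0,N_1\in\NN$.

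Next I would bridge the gap between compactly supported and Schwartz functions, since each $\psi_j$ lies in $\mathcal{S}(G)$ rather than $C_0^\infty(U)$. Here the hypothesis that $L$ is homogeneous on a homogeneous nilpotent Lie group is decisive: if $L$ has homogeneous degree $d$ under the dilations $\delta_\rho$, then $L^\tau(\psi\circ\delta_\rho)=\rho^d(L^\tau\psi)\circ\delta_\rho$, and for large $\rho$ the rescaled function $\psi_j\circ\delta_\rho$ concentrates near $e$. Multiplying by a fixed cutoff $\eta\in C_0^\infty(U)$ with $\eta\equiv 1$ near $e$, I would apply the local estimate to $\chi=\eta\cdot(\psi_j\circ\delta_\rho)$, whose value at $e$ equals $\psi_j(e)=1$. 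Unwinding the dilation rescaling and absorbing fixed factors depending on $\eta$ and $d$, one arrives at
\[
1\leq C'\,\|\psi_j\|_{(M)}\,\|L^\tau\psi_j\|_{(M)}
\]
for some $M\in\NN$ and a constant $C'$ independent of $j$. Specialising hypothesis (ii) to this index $M$ forces the right-hand side to $0$, which is the desired contradiction.

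The hard part is this extension step. The naive cutoff $\chi=\eta\psi_j$ introduces commutator terms $[L^\tau,\eta]\psi_j$ of order $\mathrm{ord}(L)-1$, which are a priori of the same size as $L^\tau\psi_j$ in a given Schwartz seminorm and cannot be absorbed into $\|L^\tau\psi_j\|_{(M)}$ without destroying the product structure required by (ii). Dilation homogeneity is essential precisely to suppress these terms: after rescaling by $\delta_\rho$, the commutator $[L^\tau,\eta](\psi_j\circ\delta_\rho)$ is supported only in the transition region of $\eta$, which can be pushed away from $e$ by choosing $\rho$ large. The technical heart of the argument is to tune $\rho=\rho(j)$ and to track the Schwartz seminorms of $\psi_j\circ\delta_\rho$ and of $L^\tau(\psi_j\circ\delta_\rho)$ under $\delta_\rho$ so that both factors in the product bound remain controlled by fixed seminorms of $\psi_j$ and $L^\tau\psi_j$ themselves, rather than by some mixed expression that condition (ii) does not control.
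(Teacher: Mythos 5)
The paper does not prove this statement---Theorem~\ref{theorem_Mueller_non_local_solvability} is quoted from \cite{Mueller} and used as a black box in the proof of Theorem~\ref{theorem_non_local_solvability_r_>_0}---so there is no in-paper proof against which to compare. Your reconstruction nevertheless has a genuine gap at the step where you claim to extract the pointwise bound $|\chi(e)|\le C\,\|\chi\|_{(N_0)}\,\|L^\tau\chi\|_{(N_1)}$, $\chi\in C_0^\infty(U)$, ``by specialising $f$ to a bump concentrated at $e$''. What the duality/closed-graph argument actually yields is a \emph{bilinear} a priori estimate $|\langle f,\chi\rangle|\le C\,\|f\|_{(q)}\,\|L^\tau\chi\|_{(N_1)}$ for $f,\chi\in C_0^\infty(U)$; letting the bump $f$ concentrate at $e$ sends the left side to $\chi(e)$ but sends $\|f\|_{(q)}$ to infinity, so no finite $C$ survives and the claimed product bound does not follow. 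The natural repair---absorbing the error $\chi(e)-\langle f_\varepsilon,\chi\rangle$ into the other seminorm and optimising over the concentration scale $\varepsilon$---only produces a geometric-mean estimate $|\chi(e)|\lesssim\|\chi\|_{(N_0)}^{\theta}\|L^\tau\chi\|_{(N_1)}^{1-\theta}$, typically with $\theta$ close to $1$; and hypothesis (ii) does \emph{not} force $a_j^{\theta}b_j^{1-\theta}\to 0$ for $\theta>1/2$ (e.g.\ $a_j=j^2$, $b_j=j^{-3}$ give $a_jb_j\to 0$ yet $a_j^{3/4}b_j^{1/4}=j^{3/4}\to\infty$).

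Closing this requires an ingredient that your sketch does not supply: either bumps $f_\varepsilon$ with many vanishing moments, so that the Taylor error is $O(\varepsilon^k)$ with $k$ large and $\theta$ can be driven below $1/2$, or the structural theorem that a $\kappa$-homogeneous, locally solvable, left-invariant operator on a graded group possesses a homogeneous---and therefore tempered---fundamental solution $E$, in which case $1=\psi_j(0)=\langle E,L^\tau\psi_j\rangle\le C\|L^\tau\psi_j\|_{(N)}\le C\|\psi_j\|_{(N)}\|L^\tau\psi_j\|_{(N)}\to 0$ gives an immediate contradiction. A smaller inaccuracy: the commutator $[L^\tau,\eta](\psi_j\circ\delta_\rho)$ is not ``pushed away from $e$'' as $\rho\to\infty$; it is supported on the fixed set $\textup{supp}(\nabla\eta)$ for every $\rho$. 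What the dilation buys is that $\psi_j\circ\delta_\rho$ and its derivatives become rapidly small on that fixed set by the Schwartz decay of $\psi_j$, and this decay has to be traded quantitatively against the finite order of the cut-off local fundamental solution $\eta E$ rather than merely asserted.
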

Now we can prove the following extension of Theorem \ref{theorem_existence_of_fundamental_solution_r>0}. 
\begin{theorem}\label{theorem_non_local_solvability_r_>_0}
In the case $r>0$  the ultra-hyperbolic operator $\Delta_{r,s}$ is not locally solvable. In particular, $\Delta_{r,s}$ does not even admit a fundamental solution in the space of Schwartz distributions 
$\mathcal{D}^{\prime}(G_{r,s})$. Moreover, $\Delta_{0,s}$ for $s>0$ is locally solvable and 
\begin{equation*}
\Delta_{0,s} C^{\infty}(\mathbb{R}^{2n+s})=C^{\infty}(\mathbb{R}^{2n+s}). 
\end{equation*}
\end{theorem}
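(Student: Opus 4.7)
The proof decomposes into three parts treated in order: non-local solvability for $r>0$, the resulting absence of a Schwartz-distributional fundamental solution, and the converse assertions in the case $r=0$.

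For Part 1 (non-local solvability when $r>0$), I would apply the quoted criterion of Müller (Theorem \ref{theorem_Mueller_non_local_solvability}) to the constant sequence $\psi_j := \varphi$, where $\varphi \in \mathcal{S}(\mathbb{R}^{2n+r+s})$ is the Schwartz function supplied by Lemma \ref{Lemma_preparation_local_solvability}, satisfying $\varphi(0)=1$ and $\Delta_{r,s}\varphi = 0$. Since $\Delta_{r,s}$ is a sum of $2n$ squares of real skew-symmetric vector fields, it is formally self-adjoint (Corollary \ref{cor:explicit_form_G_r_s}); as its coefficients are real, the formal transpose coincides with the formal adjoint, so $\Delta_{r,s}^{\tau} = \Delta_{r,s}$. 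Consequently $\Delta_{r,s}^{\tau}\psi_j \equiv 0$, hypothesis (ii) of Müller's criterion holds with each product equal to zero for every seminorm, and hypothesis (i) is immediate.

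For Part 2 (no fundamental solution in $\mathcal{D}'(G_{r,s})$ when $r>0$) I would argue by contradiction. Assume $K_{r,s} \in \mathcal{D}'(G_{r,s})$ satisfies $\Delta_{r,s} K_{r,s} = \delta_0$. For any open $U \subset G_{r,s}$ and any $f \in C_0^{\infty}(U)$, form the group convolution $u := K_{r,s} * f$. Because $f$ is smooth with compact support and $G_{r,s}$ is a (simply connected) nilpotent, hence unimodular, Lie group, $u$ is a globally smooth function on $G_{r,s}$. By left-invariance of $\Delta_{r,s}$ one computes $\Delta_{r,s} u = (\Delta_{r,s} K_{r,s}) * f = \delta_0 * f = f$. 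Restriction of $u$ to $U$ therefore shows $f \in \Delta_{r,s} C^{\infty}(U)$, giving local solvability of $\Delta_{r,s}$ and contradicting Part 1.

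For Part 3 (case $r = 0$), local solvability is immediate by the same convolution argument, now applied to the tempered fundamental solution $K_{0,s} \in \mathcal{S}'(\mathbb{R}^{2n+s})$ constructed in Theorem \ref{th:fundamental_solution}: for $f \in C_0^{\infty}(U)$ the function $u := K_{0,s} * f$ lies in $C^{\infty}(\mathbb{R}^{2n+s})$ and satisfies $\Delta_{0,s} u = f$. The stronger global surjectivity $\Delta_{0,s} C^{\infty}(\mathbb{R}^{2n+s}) = C^{\infty}(\mathbb{R}^{2n+s})$ I would derive from the general fact that a left-invariant homogeneous differential operator on a simply connected nilpotent Lie group which admits a tempered fundamental solution is surjective on $C^{\infty}$ (the nilpotent-group analogue of the Malgrange-Ehrenpreis surjectivity theorem).

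The main obstacle I anticipate is this last step of Part 3: the raw convolution $K_{0,s} * f$ is only defined for $f$ of sufficient decay, so passing from local solvability to global surjectivity on $C^{\infty}$ requires either citing the nilpotent-group Malgrange-type theorem or carrying out a partition-of-unity argument in which local preimages are summed with summability estimates and then corrected by an element of $\ker \Delta_{0,s}$ to restore smoothness across the overlap regions.
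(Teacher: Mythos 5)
Your proposal is essentially the paper's argument, with a small but worthwhile difference in how the ``equivalence'' steps are handled. Part~1 matches the paper exactly: you apply M\"uller's criterion to the constant sequence built from the function of Lemma~\ref{Lemma_preparation_local_solvability}, using that $\Delta_{r,s}$ equals its formal transpose (Corollary~\ref{cor:explicit_form_G_r_s}). For Parts~2 and~3, the paper does not carry out the convolution arguments you sketch; it simply invokes, citing Battesti and M\"uller, the known \emph{three-way equivalence} for left-invariant homogeneous operators on a simply connected nilpotent Lie group: (a)~local solvability, (b)~$LC^{\infty}(G)=C^{\infty}(G)$, (c)~existence of a fundamental solution in $\mathcal{D}'(G)$. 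Your convolution argument for Part~2 is a self-contained proof of the implication (c)~$\Rightarrow$~(a), which is fine; the paper just treats that implication as already established in the references. And the ``main obstacle'' you flagged at the end of Part~3 --- passing from a tempered fundamental solution (hence (c)) to the global surjectivity (b) --- is exactly the content of the cited equivalence (a)~$\Leftrightarrow$~(b); you do not need a separate partition-of-unity argument, only the reference to the nilpotent-group Malgrange-type theorem, which is precisely what the paper does. So your instinct about what was missing is correct, and the paper fills that gap by citation rather than by a new proof.

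One small inaccuracy: you describe $\Delta_{r,s}$ as ``a sum of $2n$ squares of real skew-symmetric vector fields''; it is a \emph{signed} sum $(X_1^2+\dots+X_n^2)-(X_{n+1}^2+\dots+X_{2n}^2)$, but each $\pm X_j^2$ is still formally self-adjoint, so the conclusion $\Delta_{r,s}^{\tau}=\Delta_{r,s}$ is unaffected.
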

\begin{proof}
Since $\Delta_{r,s}$ coincides with its transpose the local non-solvability follows from Theorem \ref{theorem_Mueller_non_local_solvability}  and Lemma \ref{Lemma_preparation_local_solvability}. 
In fact, we may choose $\{ \psi_j\}_j \subset \mathcal{S}(G_{r,s})$ to be the constant sequence $\psi_j= \varphi\in \mathcal{S}(G_{r,s})$ where $\varphi$ denotes the function in Lemma  \ref{Lemma_preparation_local_solvability}. Then \textup{(i)} and \textup{(ii)} above are fulfilled and the first statement follows from Theorem \ref{theorem_Mueller_non_local_solvability}. 
It is known that the following properties are equivalent (see \cite{Battesti,Mueller}): 
\begin{itemize}
\item[(a)] $\Delta_{r,s}$ is locally solvable, 
\item[({b})] $\Delta_{r,s} C^{\infty}(G_{r,s})= C^{\infty}(G_{r,s})$,
\item[({c})] $\Delta_{r,s}$ has a fundamental solution in $\mathcal{D}^{\prime}(G_{r,s})$. 
\end{itemize}
By the equivalences (a) $\Longleftrightarrow$ ({c}) the second statement follows from the first. 
\end{proof}
\section{Appendix}
In this appendix we link the distribution $1/P^{n-1}$ in Proposition \ref{Proposition_Distribution_P(x)-ivarepsilon} to the value of $ (P+ i0)^{\lambda} $ in (\ref{Definition_P_+_-Gelfand}) at 
$\lambda=-n+1$, cf. Proposition \ref{Propoosition_meromorphic_extension_distribution} and \cite{Gelfand_Shilov}. Assume that $\lambda \in \mathbb{C}$ and let $z \in \mathbb{C}$ be 
in the upper half plane, i.e. $\textup{Im}(z)>0$. We write:
\begin{equation*}
 z^{\lambda}=\exp\big{\{}\lambda \log |z|+ i \: \lambda \:\textup{arg}(z) \big{\}} \hspace{3ex} \hspace{3ex} \mbox{\it where} \hspace{3ex}  0< \textup{arg}(z) < \pi  
\end{equation*}
and use the notation of Proposition \ref{Proposition_Distribution_P(x)-ivarepsilon} and \cite[Chapter III, Section 2.4]{Gelfand_Shilov}. 
\begin{proposition}\label{Appendix_proposition_1}
Let $\psi \in \mathcal{S}(\mathbb{R}^{2n})$, then one has: 
\begin{equation}\label{GL_comparison_Distribution_GS}
\frac{1}{P^{n-1}} \big{[} \psi \big{]} =\lim_{\varepsilon \rightarrow 0} \int_{\mathbb{R}^{2n}} \psi(x) \big{(} P(x) +i \varepsilon \big{)}^{-n+1} \: \dd x= \Big{(} \big{(}P+i0\big{)}^{-n+1},\psi\Big{)}. 
\end{equation}
\end{proposition}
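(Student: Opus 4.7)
The plan is to deduce (\ref{GL_comparison_Distribution_GS}) from the Gelfand-Shilov theory of $(P\pm i0)^{\lambda}$ together with the fact that $\lambda = -n+1$ is regular, i.e.\ avoids the pole set $\{-n,-n-1,\ldots\}$ of Proposition~\ref{Propoosition_meromorphic_extension_distribution}. Writing $g_\pm(\lambda,\varepsilon) := \int_{\mathbb{R}^{2n}} \psi(x)(P(x)\pm i\varepsilon)^{\lambda}\,\dd x$ with the principal branch specified in the appendix preamble, the Gelfand-Shilov construction gives $\lim_{\varepsilon \to 0^+} g_\pm(\lambda,\varepsilon) = ((P\pm i0)^{\lambda},\psi)$ for $\textup{Re}(\lambda)>0$, and then extends the right-hand side meromorphically in $\lambda$.

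For the second equality of the proposition, I would show that the $\varepsilon$-limit of $g_+(-n+1,\varepsilon)$ still exists and equals the continued value. The key identity is the Gamma-function representation: writing $P+i\varepsilon = i(\varepsilon - iP)$ with $\textup{Re}(\varepsilon - iP) = \varepsilon > 0$, (\ref{Gamma_function_integral_expression}) yields
\begin{equation*}
\int_{\mathbb{R}^{2n}} \psi(x)(P+i\varepsilon)^{-n+1}\,\dd x = \frac{(-i)^{n-1}}{\Gamma(n-1)} \int_0^\infty t^{n-2}\, e^{-\varepsilon t}\!\!\int_{\mathbb{R}^{2n}} \psi(x) e^{iP(x)t}\,\dd x\, \dd t.
\end{equation*}
Splitting $\int_0^\infty = \int_0^1 + \int_1^\infty$ and using Lemma~\ref{Lemma_Fourier_transformLexpontion_quadratic_form} to rewrite the inner integral on $[1,\infty)$ as $(2t)^{-n}\int [\mathcal{F}^{-1}\psi](\xi) e^{-iP(\xi)/(4t)}\,\dd\xi$ reproduces verbatim the argument of Proposition~\ref{Proposition_Distribution_P(x)-ivarepsilon} with $i$ and $-i$ interchanged; it yields an integrable dominating function in $t$ uniform in $\varepsilon$, so dominated convergence produces the desired limit.

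For the first equality, observe that $\frac{1}{P^{n-1}}[\psi]$ as defined in Proposition~\ref{Proposition_Distribution_P(x)-ivarepsilon} equals $\lim_{\varepsilon\to 0^+} g_-(-n+1,\varepsilon)$, and the previous step (with $g_-$ in place of $g_+$) identifies this with $((P-i0)^{-n+1},\psi)$. Hence it suffices to prove $(P+i0)^{-n+1} = (P-i0)^{-n+1}$ as tempered distributions. By (\ref{Definition_P_+_-Gelfand}),
\begin{equation*}
(P\pm i0)^{-n+1} = P_+^{-n+1} + e^{\pm\pi(-n+1)i}\,P_-^{-n+1} = P_+^{-n+1} + (-1)^{n-1}\,P_-^{-n+1},
\end{equation*}
the two phases collapsing to the common real value $(-1)^{n-1}$ because $-n+1\in\mathbb{Z}$; the right-hand side is well defined since $-n+1$ is not a pole of either $P_+^\lambda$ or $P_-^\lambda$.

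The main obstacle is justifying that the pointwise $\varepsilon$-limit $\lim_{\varepsilon \to 0^+} g_\pm(-n+1,\varepsilon)$ equals the analytically continued value $((P\pm i0)^{-n+1},\psi)$, as these are a priori different constructions. The cleanest remedy is to note that for each fixed $\varepsilon > 0$ the map $\lambda \mapsto g_\pm(\lambda,\varepsilon)$ is entire, that the Gamma-integral representation above extends holomorphically to a complex neighbourhood $U$ of $\lambda = -n+1$ disjoint from the pole set, and that the estimates behind (\ref{GL_inequality_distribution_P_Fourier_transform}) give bounds uniform in $0 < \varepsilon \leq 1$ on $U$. Vitali's convergence theorem then forces $g_\pm(\,\cdot\,,0^+)$ to be holomorphic on $U$ and to coincide with $\lambda \mapsto ((P\pm i0)^\lambda,\psi)$ on $\{\textup{Re}(\lambda)>0\}\cap U$, hence on all of $U$ by uniqueness of analytic continuation.
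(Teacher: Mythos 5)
Your treatment of the second equality is sound but takes a different route from the paper: you propagate $\lim_{\varepsilon\to 0}g_\pm(\lambda,\varepsilon)=((P\pm i0)^\lambda,\psi)$ from $\textup{Re}(\lambda)>0$ to $\lambda=-n+1$ via Vitali's theorem and uniqueness of analytic continuation, whereas the paper first derives the explicit reduction formula
\begin{equation*}
\mathcal L[P+i\varepsilon]^{\lambda+1}=4(\lambda+1)(n+\lambda)[P+i\varepsilon]^{\lambda}-4i\varepsilon\lambda(\lambda+1)[P+i\varepsilon]^{\lambda-1}
\end{equation*}
plus the auxiliary estimate $\lim_{\varepsilon\to 0}\varepsilon\int\psi\,(P+i\varepsilon)^{\lambda-1}\,\dd x=0$, iterates it $k$ times to climb into the strip $\textup{Re}(\lambda)>0$, and there computes the boundary value by dominated convergence. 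Either route is acceptable for that piece.

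The first equality is where the proposal goes wrong. You reduce it to $(P+i0)^{-n+1}=(P-i0)^{-n+1}$ and justify this by writing both sides as $P_+^{-n+1}+(-1)^{n-1}P_-^{-n+1}$, asserting that $-n+1$ is not a pole of $P_\pm^\lambda$. That assertion is false: for an indefinite non-degenerate quadratic form, $P_+^\lambda$ and $P_-^\lambda$ have simple poles at \emph{every} negative integer $-1,-2,\dots$ in addition to the poles at $-n,-n-1,\dots$. Proposition~\ref{Propoosition_meromorphic_extension_distribution} concerns only the \emph{combinations} $(P\pm i0)^\lambda$, which are regular at $\lambda=-1,\dots,-(n-1)$ precisely because the residues of $P_+^\lambda$ and of $e^{\pm i\pi\lambda}P_-^\lambda$ there are opposite and cancel. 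So $(P\pm i0)^{-n+1}$ does not split into two separately well-defined terms, and the two boundary values do \emph{not} coincide: expanding around $\lambda=-k$ gives $(P+i0)^{-k}-(P-i0)^{-k}=2\pi i(-1)^k\,\textup{Res}_{\lambda=-k}P_-^\lambda$, a nonzero multiple of the Gelfand--Shilov distribution $\delta^{(k-1)}(P)$ supported on the cone $\{P=0\}$; for $n=2$ this is just the Sokhotski--Plemelj jump $(P+i0)^{-1}-(P-i0)^{-1}=-2\pi i\,\delta(P)\neq 0$. The $(P-i\varepsilon)$-regularization in Proposition~\ref{Proposition_Distribution_P(x)-ivarepsilon} therefore yields $\frac{1}{P^{n-1}}[\psi]=((P-i0)^{-n+1},\psi)$; the paper's own proof also only establishes the single-sign identity $\lim_{\varepsilon\to 0}\int\psi\,(P\mp i\varepsilon)^{-n+1}\,\dd x=((P\mp i0)^{-n+1},\psi)$ (note the factor $e^{-iP(x)t}$ in its displayed estimate, which belongs to the $-i\varepsilon$ branch), so the $+i\varepsilon$ and $(P+i0)$ in the statement are best read as a sign slip rather than as a gap that the boundary-value equality you propose could close.
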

\begin{proof}
Let $\mathcal{L}$ be the ultra-hyperbolic operator in (\ref{Definition_classical_UHO_introduction}). For each $\varepsilon >0$ the following identity can be verified by a straightforward calculation:  
\begin{equation}\label{Relation_uh_operator_distribution}
\mathcal{L}\big{[}P(x)+i \varepsilon \big{]}^{\lambda+1}= 4(\lambda+1)(n+\lambda) \big{[}P(x)+i \varepsilon \big{]}^{\lambda} -i \varepsilon 4 \lambda(\lambda+1) \big{[} P(x)+i \varepsilon \big{]}^{\lambda-1}. 
\end{equation}
\par 
Assume in addition that $1 > \textup{Re}(\lambda) > -n$. Then it holds: 

\begin{equation}\label{appendix_limit_epsilon_zero}
\lim_{\varepsilon \rightarrow 0} \varepsilon \int_{\mathbb{R}^{2n}} \psi(x) \big{[} P(x) + i \varepsilon \big{]}^{\lambda-1} \; \dd x =0 \hspace{3ex} \mbox{\it for all } \hspace{3ex} \psi \in \mathcal{S}(\mathbb{R}^{2n}). 
\end{equation} 
In order to prove (\ref{appendix_limit_epsilon_zero})  we can use the integral representation (\ref{Gamma_function_integral_expression}) of the Gamma function and the identity (\ref{Chapter_7_Fourier_transform_integral}). By a decomposition of the integral similar to the calculation following Lemma \ref{Lemma_second_form_fundamental_solution_limit} 
one only needs to verify the estimate: 
\begin{align*}
0 
&\leq  \frac{1}{|\Gamma(1- \lambda)|} \Big{|} \int_1^{\infty} t^{-\lambda} e^{-t \varepsilon} \int_{\mathbb{R}^{2n}} \psi(x) e^{-iP(x)t} \: \dd x \dd t \Big{|}\\
& \leq \frac{1}{2^n |\Gamma(\lambda-1)|} \int_1^{\infty} t^{- \textup{Re}(\lambda)-n} e^{-t \varepsilon} \: \dd t \cdot \| \mathcal{F}^{-1} \psi \big{\|}_{L^1(\mathbb{R}^{2n})}
\end{align*}
and note that under the above assumption $\textup{Re}(\lambda)+n>0$: 
\begin{multline*}
0 \leq \varepsilon \int_1^{\infty} t^{- \textup{Re} (\lambda) -n} e^{-t \varepsilon} \: \dd t=\varepsilon^{\textup{Re} (\lambda) +n} \int_{\varepsilon}^{\infty}s^{-\textup{Re}(\lambda) -n} e^{-s} \: \dd s\\
\leq 
\begin{cases}
\frac{\varepsilon^{\textup{Re}(\lambda)+n}- \varepsilon}{-\textup{Re} (\lambda) -n+1}+ \varepsilon^{\textup{Re}(\lambda)+n}, &\textup{\it if}\hspace{2ex} \; \textup{Re} (\lambda) \ne -n+1, \\ 
\varepsilon \log\big{(} \frac{1}{\varepsilon} \big{)}+ \varepsilon^{\textup{Re}(\lambda)+n}, &\textup{\it if} \hspace{2ex} \; \textup{Re} (\lambda) = -n+1
\end{cases}
\longrightarrow 0, \hspace{2ex} \mbox{\it as} \hspace{2ex} \varepsilon \downarrow 0. 
\end{multline*}
Obviously, (\ref{appendix_limit_epsilon_zero}) remains valid in the case of $\textup{Re}(\lambda) \geq 1$. 
Hence, multiplying both sides of (\ref{Relation_uh_operator_distribution}) with $\psi \in \mathcal{S}(\mathbb{R}^{2n})$ and performing a partial integration over $\mathbb{R}^{2n}$ shows for all 
$\lambda \in \mathbb{C}$ with $\textup{Re}(\lambda)\geq -n$ (provided that the limit exists): 
\begin{equation*}
\lim_{\varepsilon \rightarrow 0} \int_{\mathbb{R}^{2n}} \big{[} \mathcal{L} \psi\big{]}(x) \big{(} P(x) +i \varepsilon \big{)}^{\lambda+1} \: \dd x
=4(\lambda+1)(n+ \lambda) \lim_{\varepsilon \rightarrow 0} \int_{\mathbb{R}^{2n}} \psi(x) \big{(} P(x) +i \varepsilon \big{)}^{\lambda} \: \dd x. 
\end{equation*}
Note that after a $k$-fold ($k \in \mathbb{N}$) iteration of the last equation one has:
\begin{equation}\label{Appendix_GL_k_fold_iteration}
\lim_{\varepsilon \rightarrow 0} \int_{\mathbb{R}^{2n}} 
\big{[} \mathcal{L}^k \psi \big{]}(x) \big{(} P(x) + i \varepsilon \big{)}^{\lambda+k} \; \dd x=\frac{1}{\Lambda(\lambda, k)}\lim_{\varepsilon \rightarrow 0} 
\int_{\mathbb{R}^{2n}} \psi(x) \big{(} P(x) +i \varepsilon \big{)}^{\lambda} \: \dd x,
\end{equation}
where we define: 
\begin{equation*}
\Lambda(\lambda, k):=\frac{1}{4^k \prod_{j=1}^k(\lambda+j)(n+\lambda+j-1)}. 
\end{equation*}
For sufficiently large integer $k$ and complex exponents $\lambda$ with $\textup{Re}(\lambda)\geq -n$ we show the existence of the limit above. 
If $\mu \in \mathbb{C}$ with $\textup{Re}(\mu)>0$ then:
\begin{align*}
\lim_{\varepsilon \rightarrow 0}
 \int_{\mathbb{R}^{2n}}  & \psi(x) \big{(} P(x) + i \varepsilon \big{)}^{\mu} \; \dd x= \\
=&
\lim_{\varepsilon \rightarrow 0} \int_{P>0} \psi(x) \exp \big{(} \mu \log |P(x)+i \varepsilon|+ \mu \: i\:  \textup{arg} (P(x)+i \varepsilon) \big{)} \: \dd x \\
&\hspace{3ex} + \lim_{\varepsilon \rightarrow 0} \int_{P<0} \psi(x) \exp \big{(} \mu \log |P(x)+i \varepsilon|+ \mu \:i\: \textup{arg} (P(x)+i \varepsilon) \big{)} \: \dd x\\
=& \int_{P>0} \psi(x) \exp \big{(} \mu \log P(x)\big{)} \dd x + e^{i\pi \mu} \int_{P<0} \psi(x) \exp \big{(} \mu \log (-P(x)) \big{)} \; \dd x\\
=& \big{(} P_+^{\mu}, \psi \big{)} + e^{i \pi \mu} \big{(} P_-^{\mu}, \psi \big{)}=\big{(} \big{(}P+i0\big{)}^{\mu}, \psi\big{)}. 
\end{align*}
Choose $k \in \mathbb{N}$ such that $\textup{Re} (\lambda)+k >0$. Inserting the above relation gives:
\begin{equation}\label{Appendix_meromorphic_extension}
\lim_{\varepsilon \rightarrow 0} \int_{\mathbb{R}^{2n}} \psi(x) \big{(} P(x) +i \varepsilon \big{)}^{\lambda} \: \dd x=\Lambda(\lambda, k)\Big{(} \big{(}P+i0\big{)}^{\lambda+k}, \mathcal{L}^k\psi\Big{)}
= \Big{(} \big{(}P+i0\big{)}^{\lambda},\psi\Big{)}. 
\end{equation}\par 
Hence the right hand side of  (\ref{Appendix_meromorphic_extension}) defines a meromorphic extension to the complex plane of the limit on the left with removable singularities at the negative integers 
$\lambda=-1, -2, \cdots,  -n+1$. Moreover, note that the left hand side of (\ref{Appendix_meromorphic_extension})  is continuous for $\lambda \in \{z \: : \: \textup{Re}(z) \geq -n+1 \}$. In fact, this can be 
seen from an integral representation of the limit based on the calculations following Lemma \ref{Lemma_second_form_fundamental_solution_limit}.  By choosing $\lambda=-n+1$ in (\ref{Appendix_meromorphic_extension}) the equality (\ref{GL_comparison_Distribution_GS}) follows. 
\end{proof}

\end{document}